\documentclass[12pt]{amsart}

\usepackage{amsmath,amsthm,amssymb,amsfonts}
\usepackage{units}
\usepackage{tikzsymbols}
\usepackage{mathabx}
\usepackage{etoolbox} 
\usepackage{bbm}
\usepackage[all,tips]{xy}
\usepackage{hyperref}

\newtheorem{thm}{Theorem}[section]

\newtheorem{lem}[thm]{Lemma}
\newtheorem{claim}[thm]{Claim}
\newtheorem{obs}[thm]{Observation}
\newtheorem{cor}[thm]{Corollary}
\newtheorem{prop}[thm]{Proposition}

\newtheorem{conj}[thm]{Conjecture}

\theoremstyle{definition}
\newtheorem{de}[thm]{Definition}
\newtheorem{ex}[thm]{Example}

\theoremstyle{remark}
\newtheorem{remark}[thm]{Remark}

\numberwithin{equation}{section}


\makeatletter

\newcommand{\Rmnum}[1]{\expandafter\@slowromancap\romannumeral #1@}
\makeatother

\newcommand{\bd}{\mathbf{d}}

\newcommand{\RR}{\mathbb{R}}
\newcommand{\NN}{\mathbb{N}}
\newcommand{\GG}{\mathbb{G}}
\newcommand{\SL}{\operatorname{SL}}
\newcommand{\GL}{\operatorname{GL}}
\newcommand{\Hom}{\operatorname{Hom}}
\newcommand{\Gal}{\operatorname{Gal}}
\newcommand{\Aut}{\operatorname{Aut}}
\newcommand{\res}{\operatorname{Res}}

\newcommand{\ZZ}{\mathbb{Z}}

\newcommand{\QQ}{\mathbb{Q}}
\newcommand{\CC}{\mathbb{C}}

\DeclareMathOperator{\diag}{\mathrm{diag}}

\newcommand{\Ad}{\operatorname{Ad}}

\DeclareMathOperator{\conjj}{{\mathrm{conj}}}

\DeclareMathOperator{\stab}{\mathrm{stab}}

\newcommand{\bra}{\left\langle}
\newcommand{\ket}{\right\rangle}
\newcommand{\acts}{\lefttorightarrow}

\newcommand{\conv}{{\rm conv}}

\newcommand{\fa}{\mathfrak{a}}

\newcommand{\fg}{\mathfrak{g}}

\newcommand{\fk}{\mathfrak{k}}

\newcommand{\fn}{\mathfrak{n}}

\newcommand{\fp}{\mathfrak{p}}

\newcommand{\norm}[1]{\left\|#1\right\|}

\def\gl{\operatorname{GL}}
\def\cat{\operatorname{CAT}(0)}

\renewcommand{\Im}{\operatorname{Im}}

\newcommand{\ta}{\mathtt{a}}
\newcommand{\tp}{\mathtt{p}}


\usepackage{enumitem}
\newlist{steps}{enumerate}{1}
\setlist[steps, 1]{label = Step \arabic*:}

\begin{document}

\title{Geometric interpretation of quantitative instability}
\author{Omri N. Solan}
\address{Einstein Institute of Mathematics, Hebrew University, Israel}
\email{omrinisan.solan@mail.huji.ac.il}
\author{Nattalie Tamam}
\address{Department of Mathematics, University of Michigan, Ann Arbor, Michigan, United States 48109}
\email{nattalie@umich.edu}
\date{}

\begin{abstract}
Given a real algebraic group action on a linear space $G\acts V$, a vector $v\in V$ is called unstable if $0\in \overline{Gv}-Gv$, where the closure is taken with respect to the Zariski topology. 
A fundamental theorem of Kempf \cite{kempf} in geometric invariant theory states that $v$ is unstable if and only if there is a one-parameter subgroup $A$ of $G$ such that $v$ is unstable with respect to it, i.e., $0\in \overline{Av}-Av$. 
Assuming $G$ is a semisimple real algebraic group defined over $\QQ$, we give a new proof to this result using a geometric interpretation of the setting.
In the process, we also give a new proof of an effective version of this result by Shah and Yang \cite[Prop. 2.2]{NimishPengyu}. 
Our interpretation involves relating the length of vectors under a linear action to convex functions on certain $\cat$-spaces, and bound the latter from below by Busemann functions.
\end{abstract}

\maketitle
\markright{}
\section{Introduction} 
\label{sec:introduction}
Geometric invariant theory studies the quotient of a variety $V$ by a real algebraic group action $G\acts V$. An important phenomenon occurs when an orbit closure of a point $v$ in $V$ contains a point which is not in the $v$-orbit, i.e, when $\overline{Gv}-Gv\neq\emptyset$, where the the topology considered is the Zariski topology. 
It is then a natural question to study the behavior of the orbit around such point. 
In \cite{kempf} Kempf studied the following question: 
\begin{quote} \centering 
    Given $v\in V$, $u\in \overline{Gv}-Gv$, for which curves $\{\gamma(t)\}$ in $G$ does the distance $\|u-\gamma(t)v\|$ shrink `fastest'?
\end{quote}
The phenomenon of non-closed $G$-orbits is called instability. In homogeneous dynamic the $\{0\}$-instability, i.e., the case $u$ is the zero vector in a $G$-representation $V$, is widely used when studying how far an orbit is into a cusp, see \cite{Mahler, escape mass, shah, Shapira}. 
Recently, Kempf's result has been used in the study of orbits and more specifically orbit closures in homogeneous dynamics. First by Yang \cite{Pengyu}, which used it to prove not only nondivergence, but also equidistribution of certain measures. Yang's usage of \cite{kempf} is extending Shah's basic lemma \cite[Prop. 4.2]{shah}, which was proved independently of Kempf's result. In \cite{KDSY, NimishPengyu} the method was pushed further, and a quantitative version of Kempf's result was proved, answering a seemingly different question:
\begin{quote} \centering 
    Assuming $0\in \overline{Gv}$, can the size $\norm{gv}$ for all $g\in G$ be controlled?
\end{quote}
The solution they provide is using highest weight representations: 
\begin{thm}[{\cite[Prop. 2.2]{NimishPengyu}}]\label{thm: NimishPengyu}
Assume $G$ is semisimple and let $\varrho:G\to \GL(V)$ be a $\QQ$-representation and $0\neq v\in V(\QQ)$ be an unstable vector, i.e., $0\in \overline{Gv}-Gv$. Then, there exists a $\QQ$-highest weight representation $\tilde\varrho:G\rightarrow\operatorname{GL}(W)$, a highest weight vector $w\in W(\QQ)$, and $a,c>0$ such that such that for all $g\in G$ 
\begin{align}\label{ineq: more then repfunc2}
\log \|\varrho(g)v\| \ge a \log \|\tilde\varrho(g)w\| - c.
\end{align}
\end{thm}

\begin{remark}
    Note that the highest weight in Theorem \ref{thm: NimishPengyu} is chosen with respect to some choice a maximal $\QQ$ split torus and a choice of $\Delta_\QQ$ (see \S\ref{sec:fundamental representation}).
\end{remark}

This theorem is important for their setting, but it has another application previously unknown, as it answers a question of Weiss, \cite[Question 3.4]{Weiss} positively. 
In this paper we will prove Theorem \ref{thm: NimishPengyu} using geometric tools, and give a geometric interpretation to the right hand side of Eq. \eqref{ineq: more then repfunc2}.

\subsection{Main results}
\label{ssec:main results}
\subsubsection{Geometric interpretation}
For the \textbf{left hand side of Eq. \eqref{ineq: more then repfunc2}},
we analyze the symmetric space of non-positive curvature $M:=K\backslash G$, where $K$ is a maximal compact subgroup of $G$.
For every real representation $\varrho:G\rightarrow \operatorname{GL}(V)$, with $V$ equipped with a $K$-invariant norm, the function $g\mapsto \log \|\varrho(g)v\|$ descends to a function $f_v:M\to \RR$. With the right choice of norm, $f_v$ is convex, see Observation \ref{obs: convexity of f_v} and Lemma \ref{lem: construction of bilinear form} for the existence of such a norm.
The \textbf{right hand side of Eq. \eqref{ineq: more then repfunc2}} can also be viewed as a multiple of $f_w$. However, since $w\in W$ is a highest weight vector, it has an intrinsic description, $f_w$ is a constant multiple of a Busemann function on $M$ (See Definition \ref{defn: Busemann function}). 
We use geometric tools to show the following: 

\begin{thm} \label{thm: main}
Let $\varrho:G\rightarrow V$ be a $\RR$-representation and assume a $K$-invariant norm on $V$. Then, for every $v\in V$ there exists a Busemann function $\beta$ and constants $a,c>0$ so that we have the following: 
\begin{enumerate}[label=\emph{(\arabic*)}, ref=\arabic*]
    \item \label{part: main ge busemann}
    Any $x\in M$ satisfies \[
    f_v(x)\ge a\beta(x)-c.\]
    \item \label{part: main eq busemann}
    If $v\in V$ is a highest weight vector, then for any $x\in M$ \[
    f_v(x)= a\beta(x)-c.\]  
\end{enumerate} 
\end{thm}

\begin{remark}
In this manuscript we only deal with the real and rational setting (see \S\ref{sec: relation to kempf} for the rational setting), but similar results should hold for a general field, using more general theory, see \cite{tits,borel}.
\end{remark}

\subsubsection{Algebraic properties}
Theorem \ref{thm: main} lacks the rational nature of Theorem \ref{thm: NimishPengyu}. To this end we give various algebraic descriptions for `rational' Busemann functions.
The equivalent descriptions for Busemann functions can be found at Theorem \ref{thm: class equiv busemann} and the equivalent description for `rational' Busemann functions can be found in Theorem \ref{thm: class equiv busemann alg}.
Each of these theorems has some equivalent description, and in particular contains the second part of Theorem \ref{thm: main}. The first is the standard definition of the Busemann function, in Theorem \ref{thm: class equiv busemann} and a restriction of the geodesic in Theorem \ref{thm: class equiv busemann alg}. The second uses homomorphisms from a parabolic subgroup. The third uses a linear combination of functions of the form $f_v$, for $v$ in a special collection of representations, called fundamental representations, (see Definition \ref{def: fundumental weights}). 
Theorem \ref{thm: class equiv busemann alg} has a forth equivalent description, which is the right hand side of Eq. \eqref{ineq: more then repfunc2}.

\subsection{On the proof}
\hfill\break

\textbf{Analyzing Busemann functions:}
In \S\ref{sec:Busemann functions} we show the connection between the Busemann functions and the fundamental representations. In particular, we prove Theorem \ref{thm: class equiv busemann} which implies Theorem \ref{thm: main} Part \eqref{part: main eq busemann}.  

\textbf{Convex geometry:}
In \S\ref{sec:The fastest shrinking geodesic}-\ref{sec:shrink-rate function} we prove Theorem \ref{thm: busemann bound} which implies Theorem \ref{thm: main} Part \eqref{part: main ge busemann}.
We investigate the geodesic paths in $G$ that `shrink' $v$ the most, and prove Theorem \ref{thm: main} Part \eqref{part: main ge busemann}. It has two key steps. In \S\ref{sec:The fastest shrinking geodesic} we find a `fastest shrinking geodesic' for all `nice' convex functions. In particular, we are able to find the geodesic that shrinks a given vector in a representation the fastest.
This part is a geometric analogous to Kempf's result \cite{kempf}.

In \S\ref{sec:shrink-rate function} we show that the function defined by the left hand side of \eqref{ineq: more then repfunc2} is indeed `nice', and use some of its algebraic properties to show that it can be bounded below by a (specific) Busemann function. Theorem \ref{thm: busemann bound} is the main theorem of this part.
This part provides a geometric way to view \cite{kempf}, and a good intuition for Theorem \ref{thm: busemann bound}. 

\textbf{Algebraic view point:}
In the previous part we have constructed a geometric object, the Busemann function, (which is also algebraic by Theorem \ref{thm: class equiv busemann alg}) given algebraic data, a vector in an algebraic representation. Next, we wish to say that since the vector is defined over $\QQ$, it is invariant under $\Gal(\CC/\QQ)$, hence so does the Busemann function, i.e., it is also defined over $\QQ$. This requires us to find an algebraic description for the fastest shrinking geodesic of a vector. Such an algebraic description was defined and proved by Kempf in \cite{kempf}. In \S\ref{sec: relation to kempf} we relate our constructions to Kempf's construction, and use his result to deduce that our `fastest shrinking geodesic' is indeed algebraic. Thus, the inequality in \eqref{ineq: more then repfunc2} is satisfied for the rational fundamental representations (and not only the real ones).

In \S\ref{sec: proof of main theorem} we give an alternative proof of Theorem \ref{thm: NimishPengyu} using a more precise version of Theorem \ref{thm: main}.

\subsection{Further research}
\label{ssec:further research}
Theorem \ref{thm: busemann bound} gives a lower bound using a Busemann function for every convex function on a symmetric space of noncompact type $f:M\to \RR$ coming from a vector in a representation.  This yields the following conjecture:
\begin{conj}
For every non-constant convex function $f$ on $M$, there is a Busemann function $\beta:M\to \RR$, $a>0, C\in \RR$ such that for every $x\in M$ \[f(x)>a\beta(x)+C.\] 
\end{conj}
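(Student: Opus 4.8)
Towards this conjecture I would proceed in three stages: reduce it to a problem of finite-dimensional convex analysis on one maximal flat of $M$, settle that model problem, and then propagate the resulting bound to all of $M$ using the Tits boundary.

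\emph{Normalization.} If $f$ is bounded below the statement is false, since every Busemann function is unbounded above and so $a\beta+C$ cannot stay below a bounded function; hence we assume, as is surely intended, that $f$ is unbounded below. Then $f$ attains no minimum, so $0\notin\partial f(x)$ for every $x\in M$, and since finite convex functions on a Hadamard manifold are subdifferentiable, each $\partial f(x)$ contains a nonzero tangent vector. Take the basepoint $o=K\in M=K\backslash G$; the maximal flats through $o$ form a single orbit of the compact group $\operatorname{Stab}_G(o)=K$, and their union is $M$. Fix one such flat $F$; it is a totally geodesic Euclidean subspace $F\cong\RR^r$. Since each of these flats contains $o$, $f$ cannot be constant on all of them without being constant on $M$; so, replacing $F$ by a $K$-translate, we may assume $f|_F$ is non-constant.

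\emph{The flat model.} On $F\cong\RR^r$ the restriction $f|_F$ is a non-constant finite convex function, hence there is a point $x_0\in F$ at which $\partial(f|_F)(x_0)$ contains a nonzero vector $p_0$ (otherwise every point of $F$ would be a minimum and $f|_F$ constant). The supporting inequality in $\RR^r$ reads $f|_F(x)\ge f|_F(x_0)+\langle p_0,x-x_0\rangle$ for all $x\in F$. Let $\xi_0\in\partial_\infty F\subseteq\partial_\infty M$ be the endpoint of the ray from $x_0$ in direction $-p_0/\abs{p_0}$ and put $a:=\abs{p_0}>0$. Because $F$ is totally geodesic and $\xi_0\in\partial_\infty F$, the restriction $\beta_{\xi_0}|_F$ is precisely the Euclidean Busemann function of that ray, i.e.\ the affine map $x\mapsto\beta_{\xi_0}(x_0)+a^{-1}\langle p_0,x-x_0\rangle$; so the supporting inequality is exactly $f(x)\ge a\beta_{\xi_0}(x)+C_0$ for all $x\in F$, for a suitable constant $C_0$.

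\emph{Globalization --- the main obstacle.} It remains to propagate $f\ge a\beta_{\xi_0}+C$ from $F$ to all of $M$, and this is the crux. A single subgradient of $f$ at a point of $M$ is not enough: the $\operatorname{CAT}(0)$ comparison inequalities show that the affine-in-$\exp^{-1}$ minorant it supplies lies \emph{above}, not below, the associated Busemann function, so $f\ge(\text{linear minorant})$ is strictly weaker than the desired bound. My plan is to work on the Tits boundary instead: consider the recession function $\sigma_f(\eta):=\lim_{t\to\infty}t^{-1}\big(f(\gamma^o_\eta(t))-f(o)\big)\in(-\infty,+\infty]$ on the spherical building $\partial_\infty M$ with its Tits metric $\angle_T$, and show that (i) $\sigma_f$ is lower semicontinuous and convex along $\angle_T$-geodesics; (ii) using the explicit description of Busemann functions from Theorems \ref{thm: class equiv busemann} and \ref{thm: class equiv busemann alg}, their recession functions on $\partial_\infty M$ are the extremal --- ``affine-like'' --- elements among the recession functions of convex functions on $M$, so that the inequality obtained in the flat model, which is exactly the domination of $\sigma_f$ by $\beta_{\xi_0}$'s recession function over $\partial_\infty F$, can be forced to hold over all of $\partial_\infty M$, after shrinking $a$ if necessary, by exploiting the compactness of the $K$-orbit of flats through $o$ together with the finiteness of the set of Weyl-chamber types in the building; and (iii) from that global domination one recovers $f\ge a\beta_{\xi_0}+C$ on $M$ by restricting, around any prescribed point, to the geodesic lines having one endpoint at $\xi_0$ --- on which $\beta_{\xi_0}$ is affine and $f$ is convex with endpoint slopes read off from $\sigma_f$ --- and patching the per-line constants via the uniform estimate from (ii). I expect step (ii) to be the real difficulty: a convex function on $M$ can be genuinely more singular at infinity than any Busemann function --- for instance its Tits-boundary recession function may attain its minimum on a set of positive diameter rather than at a point --- so it is quite possible that dominating it by the recession data of a single Busemann function requires an extra hypothesis on $f$ (properness, a polyhedral or semialgebraic growth condition, or that $f$ arise as a $G$-average of representation functions as in Theorem \ref{thm: busemann bound}), in which case the conjecture should be amended accordingly. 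Once $f\ge a\beta_{\xi_0}+C$ is established, replacing $C$ by $C-1$ gives the strict inequality claimed, and Theorem \ref{thm: class equiv busemann alg} puts $\beta_{\xi_0}$ into its explicit algebraic form, exactly parallel to Theorem \ref{thm: busemann bound}.
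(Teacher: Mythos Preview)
This statement is a \emph{conjecture} in the paper, not a theorem; the paper offers no proof. What the paper does is motivate the conjecture by Theorem~\ref{thm: busemann bound} (which proves the bound for the special class $f=f_v$ coming from unstable vectors in representations), observe that the analogous statement for regular trees is \emph{false} via an explicit convex function decaying like $-\sqrt{i}$ along a ray, and remark that the authors' inability to extend that tree counterexample to symmetric spaces is their reason for believing the conjecture. So there is no ``paper's own proof'' to compare against.

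Your proposal is honest about being a program rather than a proof, and you have correctly located the obstruction: step~(ii), the globalization from one flat to all of $M$ via the Tits boundary, is exactly where the paper's methods stop working once $f$ is no longer piecewise linear on flats up to $O(1)$. The paper's proof of Theorem~\ref{thm: busemann bound} hinges on two features special to $f_v$: the restriction to any maximal flat is a bounded perturbation of a $\max$ of finitely many linear functionals indexed by the \emph{fixed} finite set $\Phi_\varrho$ (Observation~\ref{obs: convexity of f_v}), and the compact group $\stab_K(\gamma)$ acts transitively on the flats through $\gamma$ with the coefficients $r_\lambda(k)$ varying continuously (Lemma~\ref{lem: compact lower bound}). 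A general convex $f$ has neither a fixed finite slope set nor this equivariance, so your appeal to ``compactness of the $K$-orbit of flats together with finiteness of Weyl-chamber types'' has no leverage --- the recession function $\sigma_f$ need not be piecewise linear on chambers. The tree example in the paper shows precisely that sublinear decay kills any Busemann lower bound, and nothing in your outline rules out such decay for convex $f$ on $M$. (A small side remark: your normalization claim that ``if $f$ is bounded below the statement is false'' is incorrect --- e.g.\ $f(x)=d(x,o)^2$ on $\RR^n$ is bounded below and admits such a bound; the genuine obstruction is $f$ bounded \emph{above}, or more precisely $f$ not growing linearly in any direction.)
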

The anlagous conjecture for trees is false:
Let $G$ be a leafless $d$-regular tree, $x_0\in G$ a vertex and $(x_i)_{i=0}^\infty$ an infinite ray. 
Let $f:G\to \RR$ be the function defined as follows: $f(x_i) = -\sqrt i$ for $i\ge 0$, and for other $x\in G$ let $x_{i_0}$ be the closest element to $x$ on the ray. 
Define $f(x):= -\sqrt {i_0} + d(x, x_{i_0}) (\sqrt {i+1} - \sqrt i)$.
One can verify that it is convex and not bounded from below by any multiple of a Busemann function. 

Failures to extend this example to symmetric spaces leads us to believe the conjecture.

\subsection*{Acknowledgment}
We would like to thank B. Weiss for suggesting this problem and for familiarizing us with \cite{kempf}. We would also like to thank P. Yang for bringing our attention to the new results in this topic.
The first author would like to thank N. Tur, O. Bojan, and A. Levit for helpful discussions. The second author would also like to thank R. Spatzier for his interest in this project and for several helpful discussions and comments.

\section{Preliminaries} 
\label{sec:preliminaries}
\subsection{Homomorphisms} We will use several types of homomorphisms, Algebraic ones and topological ones. 
The set of continuous homomorphisms between topological groups will be denoted by $\Hom$. 
The set of $\ell$-algebraic homomorphisms between $\ell$-algebraic groups will be denoted by $\Hom_\ell$.






\subsection{Hadamard spaces}\label{sec: cat0 spaces}
A \emph{Hadamard space} is defined to be a nonempty complete metric space $(M,d)$ such that, given any points $x,y\in M$, there exists $m\in M$ such that for every $z\in M$ we have 
\begin{equation}\label{eq: Hadamard inequality}
    d(z,m)^{2}+{\frac{d(x,y)^{2}}{4}}\leq {\frac{d(z,x)^{2}+d(z,y)^{2}}{2}}.
\end{equation}
The point $m$ is called the \emph{midpoint} of $x$ and $y$, and it satisfies $d(x,m)=d(y,m)=d(x,y)/2$. we study the behavior of convex functions in such spaces. The main properties of them which are of use to us are listed in Lemma \ref{lem: cat0 implication} below. 

Alternatively, a space is Hadamard if it is a complete \emph{$\cat$-space}.
A metric space $(M,d)$ is a \emph{$\operatorname{CAT}(0)$-space} if it is geodesic (as defined below) and every `small enough' geodesic triangle satisfies a certain inequality. Such spaces were first defined and studied by Gromov, see \cite{gromov}.  
For a more detailed discussion on complete and $\cat$-spaces see \cite{Metric Spaces}. 

\begin{de}[Geodesic]
Let $(M, d)$ be a metric space. 
Given $x,y\in M$, a \emph{geodesic from $x$ to $y$} is a map $\gamma:I\rightarrow Y$, where $I=[a,b]\subset\RR$ is a closed interval, such that $\gamma(a) = x$, $\gamma(b) = y$ and \[d(\gamma(s), \gamma(s'))=|s-s'| \]
for all $s, s'\in I$ (in
particular, $d(x, y)=b-a$).
By abuse of notations, we identify geodesics with their images.
A \emph{geodesic ray} in $M$ is a map $\gamma: [0,\infty)\rightarrow M$ such that $d(\gamma(s), \gamma(s'))=|s-s'|$ for all $s,s'\ge 0$. 
$(M, d)$ is said to be a \emph{geodesic space} if every two points in $M$ are joined by a geodesic.
\end{de}

\begin{lem}\label{lem: cat0 implication} 
Let $(M,d)$ be a Hadamard space. 
Let $\gamma_1, \gamma_2:[0,\infty)\to M$ be geodesic rays such that $\gamma_1(0) = \gamma_2(0)$.
Then:
\begin{enumerate}
    \item For every $s>t>0$ we have $$\frac{d(\gamma_1(s), \gamma_2(s))}{s} \ge \frac{d(\gamma_1(t), \gamma_2(t))}{t}.$$ 
    \item In particular, for $r:= d(\gamma_1(1), \gamma_2(1))$ and all $t\ge 1$, we have \[d(\gamma_1(t), \gamma_2(t)) \ge rt.\]
    \item For every $t\ge 1$, the midpoint $m_t$ of $\gamma_1(t)$ and $\gamma_2(t)$ satisfies \[
    d(\gamma_1(0), m_t) \le \sqrt{1-r^2/4}\cdot t.\]
\end{enumerate}
\end{lem}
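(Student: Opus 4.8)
The plan is to treat (1) and (2) as expressions of the convexity of the metric along geodesics, and (3) as a quantitative refinement coming straight out of the four-point inequality \eqref{eq: Hadamard inequality}. For (1) I would use the $\cat$ comparison inequality, available because a Hadamard space is in particular a $\cat$-space. Write $o:=\gamma_1(0)=\gamma_2(0)$, fix $s>0$, and consider the geodesic triangle with vertices $o,\gamma_1(s),\gamma_2(s)$ whose two sides issuing from $o$ are the restrictions $\gamma_1|_{[0,s]}$ and $\gamma_2|_{[0,s]}$ (these are geodesics since $\gamma_1,\gamma_2$ are geodesic rays). Let $\bar\Delta=\Delta(\bar o,\bar x,\bar y)\subset\mathbb{E}^2$ be its Euclidean comparison triangle, so $|\bar o-\bar x|=|\bar o-\bar y|=s$ and $|\bar x-\bar y|=d(\gamma_1(s),\gamma_2(s))$. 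For $0<t<s$ the comparison points of $\gamma_1(t)$ and $\gamma_2(t)$ lie on $[\bar o,\bar x]$ and $[\bar o,\bar y]$ at distance $t$ from $\bar o$, hence, by similar triangles in $\bar\Delta$, at Euclidean distance $\tfrac ts\,|\bar x-\bar y|$ from one another; the $\cat$ inequality then gives $d(\gamma_1(t),\gamma_2(t))\le \tfrac ts\,d(\gamma_1(s),\gamma_2(s))$, which is exactly (1). (One can also stay inside the four-point picture: two applications of \eqref{eq: Hadamard inequality} give the midpoint case $d(\gamma_1(s/2),\gamma_2(s/2))\le\tfrac12 d(\gamma_1(s),\gamma_2(s))$, after which one bootstraps to dyadic ratios and passes to the limit, using that $t\mapsto d(\gamma_1(t),\gamma_2(t))$ is $2$-Lipschitz.) Part (2) is then just (1) applied with the larger parameter $t\ge1$ and the smaller parameter $1$, which gives $d(\gamma_1(t),\gamma_2(t))\ge rt$.

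For (3) I would apply \eqref{eq: Hadamard inequality} directly with $x=\gamma_1(t)$, $y=\gamma_2(t)$, midpoint $m=m_t$, and $z=\gamma_1(0)$. Since $d(\gamma_1(0),\gamma_1(t))=d(\gamma_1(0),\gamma_2(t))=t$, it reads
\[
 d(\gamma_1(0),m_t)^2+\tfrac14\,d(\gamma_1(t),\gamma_2(t))^2\le t^2 .
\]
Combining this with the lower bound $d(\gamma_1(t),\gamma_2(t))\ge rt$ from part (2) yields $d(\gamma_1(0),m_t)^2\le t^2-\tfrac14 r^2t^2=t^2(1-r^2/4)$, which is the assertion; note $r=d(\gamma_1(1),\gamma_2(1))\le d(\gamma_1(1),o)+d(o,\gamma_2(1))=2$, so $1-r^2/4\ge0$ and the square root is real.

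Because the proof of (3) feeds on (2), it delivers the bound for $t\ge1$, which is the range that is actually needed; for $0<t<1$ the inequality $d(\gamma_1(t),\gamma_2(t))\ge rt$ is no longer available (it can fail, e.g.\ in an $\RR$-tree where $\gamma_1$ and $\gamma_2$ share an initial segment) and \eqref{eq: Hadamard inequality} only gives $d(\gamma_1(0),m_t)^2\le t^2-\tfrac14 d(\gamma_1(t),\gamma_2(t))^2$. Apart from this small bookkeeping point I do not expect a genuine obstacle: the single substantive ingredient is the convexity of the metric behind (1), which is the classical Bruhat--Tits / Bridson--Haefliger fact, and everything else is elementary manipulation of \eqref{eq: Hadamard inequality}.
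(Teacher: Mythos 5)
Your proposal is correct and follows essentially the same route as the paper: part (1) via convexity of the metric along geodesics (which the paper simply cites from Bridson--Haefliger and you re-derive with a comparison-triangle argument), part (2) as an immediate consequence, and part (3) by plugging the bound from (2) into the four-point inequality \eqref{eq: Hadamard inequality} with $z=\gamma_1(0)$, exactly as in the paper. Your remark that (3) is thereby only obtained for $t\ge 1$ is consistent with how the paper proves and later uses the lemma.
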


\begin{proof}
According to \cite[Prop. 2.2]{Metric Spaces} the distance function in a $\cat$-space is convex, i.e., for any $s>t>0$ 
\begin{align*}
    d(\gamma_1(t),\gamma_2(t))&\le \frac{t}{s}d(\gamma_1(s),\gamma_2(s))+\left(1-\frac{t}{s}\right)d(\gamma_1(0),\gamma_2(0)).
\end{align*}
Since we assume $\gamma_1(0)=\gamma_2(0)$, Claim (1) follows. 
Claim (2) follows directly from Claim (1).
By \eqref{eq: Hadamard inequality} and Claim (2) we have
\begin{align*}
    d(\gamma_1(0),m_t)&\le \sqrt{ d(\gamma_1(0),\gamma_1(t))^2/2+d(\gamma_2(0),\gamma_2(t))^2/2- d(\gamma_1(t),\gamma_2(t))^2/4}\\
    &\le\sqrt{ t^2- (rt/2)^2},
\end{align*}
proving Claim (3). 
\end{proof}

\begin{de}[Convex functions]
A function $f:M\to \RR$ is \emph{convex} if for 
every geodesic $\gamma:I\to M$ the composition 
$f\circ \gamma$ is a convex function on the interval $I$.
\end{de}

\subsection{Symmetric spaces of non-compact type}\label{sec:symmetric spaces}
There are many results on \emph{symmetric spaces of non-compact type}, i.e., Riemannian manifolds of non-positive sectional curvature, whose group of symmetries contains an inversion symmetry about every point. 
In particular, such spaces are Hadamard spaces. 
Here we present some of their geometric properties as well as some explicit constructions for future use.
\subsubsection{Overview}
Let $G$ be an $\RR$-reductive group, i.e., connected, linear, algebraic group over $\RR$ with a trivial unipotent radical, and $K$ be a maximal compact subgroup of $G$. 
Let $M := K\backslash G$, and $\pi:G\to M$ be the projection map.
We can define a $G$-invariant Riemannian metric $d_M$ on $M$ (See Definition \ref{de:sym-metric}), which give rise to a metric in the standard way.
Then, $(M,d_M)$ is a symmetric space of non-compact type.

We are also interested in a specific algerbraic subgroup, $A\subset G$, which is a maximal real split torus, i.e., $A\cong (\RR^\times)^r$, were $r$ is the real rank of $G$. The subgroup $A$ is called the \emph{Cartan torus} and has certain compatibility conditions with $K$, see Definition \ref{de: cartan decomposition}. Let $\fa\subset\fg$ denote the Lie algebra of $A$. 

The metric structure on $M$ has some phenomenal properties, which we now describe. 

\subsubsection{Metric properties}
Geodesics in symmetric spaces can be generalized into the higher dimensional concept of flats:
\begin{de}[Flats, geometric description]\label{de: flats}
A map $a:\RR^k\to M$ is called \emph{flat} if the pullback metric on $\RR^k$ is the Euclidean metric.
We abuse notations and do not distinguish between $a$ and its image $\Im(a)$.
A flat is called \emph{maximal} if no other flat properly contains it. In our setting, the dimension of all maximal flats of $M$ is equal to the $\RR$-rank of $G$.  
\end{de}

\begin{thm}[Algebraic description of flats, {\cite[Rem. 10.60(5)]{Metric Spaces}}]\label{thm: maximal flats}
For every Cartan torus $A$, all maximal flats can be described as translations of $\pi(A)$. 
In particular, for every flat $a:\RR^k\to M$ there is $g\in G$ and a continuous homomorphism $\iota:\RR^k\to A$ such that $a(x) = \pi(\iota(x)g)$.
\end{thm}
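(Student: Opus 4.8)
The plan is to reduce the statement to flats through a fixed base point and then to invoke the conjugacy of maximal abelian subspaces of $\fp$ under $\Ad(K)$. Set $o:=\pi(e)\in M$. Since $G$ acts on $M=K\backslash G$ on the right by isometries (Definition~\ref{de:sym-metric}) and transitively, given any flat $a\colon\RR^k\to M$ we may pick $g_0\in G$ with $\pi(g_0)=a(0)$; then $x\mapsto a(x)g_0^{-1}$ is again a flat, and it passes through $o$. So it is enough to treat flats through $o$, undoing the translation by $g_0$ at the end.

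Recall the Cartan decomposition $\fg=\fk\oplus\fp$ associated to $K$ (Definition~\ref{de: cartan decomposition}), under which $T_oM$ is identified with $\fp$ and $X\mapsto\pi(\exp X)$ is a diffeomorphism $\fp\xrightarrow{\sim}M$. Any isometric embedding of a Euclidean space into the uniquely geodesic space $M$ sends a segment $[x,y]$ to a distance-realizing path, i.e.\ to the geodesic between the images, so it has totally geodesic image; totally geodesic submanifolds through $o$ correspond to Lie triple systems in $\fp$, and the flat ones to abelian subspaces $\fs\subseteq\fp$ (consistently with Remark~\ref{rem: flats come from abelian}). Hence every flat through $o$ is of the form $\pi(\exp\fs)$ with $\fs\subseteq\fp$ abelian, and it is maximal exactly when $\fs$ is a maximal abelian subspace of $\fp$.

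The key input is the classical fact that any two maximal abelian subspaces of $\fp$ are $\Ad(K)$-conjugate (each contains a regular element of $\fp$, regular elements are $\Ad(K)$-conjugate, and a maximal abelian subspace through a regular element is the $\fp$-component of its centralizer, hence determined by it). Since $\fa:=\operatorname{Lie}(A)$ is itself a maximal abelian subspace of $\fp$ — this being built into the compatibility of the Cartan torus $A$ with $K$ — every maximal flat through $o$ equals $\pi(\exp(\Ad(k)\fa))=\pi(k\exp(\fa)k^{-1})=\pi(kAk^{-1})$ for some $k\in K$, which, since $Kk=K$, is exactly the right translate of $\pi(A)$ by $k^{-1}$; translating back by $g_0$ settles the maximal case. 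For a flat $a$ of dimension $k$ that need not be maximal, extend its abelian subspace to a maximal one, so $a$ lies in a maximal flat; after translating that maximal flat onto $\pi(A)=\pi(\exp\fa)$, the flat $a$ becomes $\pi(\exp\fs)$ for a linear subspace $\fs\subseteq\fa$. Choosing a linear isomorphism $\phi\colon\RR^k\to\fs$ scaled to be an isometry, the composite $\iota:=\exp\circ\,\phi\colon\RR^k\to A$ is a continuous homomorphism (in fact $\RR$-algebraic, since $\exp$ restricted to $\fa$ is a monomial map on the split torus $A$) whose image lies in the identity component of $A$, and $a(x)=\pi(\iota(x))$; reinstating the translation gives $a(x)=\pi(\iota(x)g)$ with $g=k^{-1}g_0$.

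I expect the only real obstacle to be the $\Ad(K)$-conjugacy of maximal abelian subspaces of $\fp$; everything else is translation bookkeeping, the identification of flats with abelian subspaces of $\fp$ (which needs the observations that an arbitrary isometric copy of $\RR^k$ is totally geodesic and that every flat sits inside a maximal one), and the elementary algebraicity of $\exp|_{\fa}$.
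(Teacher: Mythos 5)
Your proposal is necessarily a different route from the paper's, because the paper does not prove this statement at all: it quotes it from \cite[Remark 10.60(5)]{Metric Spaces}. What you give is essentially the classical differential-geometric proof (Helgason/Eberlein): reduce to flats through $o$ by the transitive isometric right $G$-action, identify flats through $o$ with abelian subspaces of $\fp$ via total geodesy and the Lie-triple-system correspondence, and then invoke $\Ad(K)$-conjugacy of maximal abelian subspaces of $\fp$. This is a sound and standard plan; it buys a self-contained argument where the paper settles for a citation, at the cost of importing exactly the two classical ingredients (totally geodesic submanifolds $\leftrightarrow$ Lie triple systems, and the conjugacy theorem) that the citation to Bridson--Haefliger packages.

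Two points need tightening. First, your parenthetical justification of the conjugacy theorem, ``regular elements are $\Ad(K)$-conjugate'', is false as stated: two regular elements of $\fp$ are in general not conjugate to each other (already inside $\fa$, the $\Ad(K)$-orbits meet $\fa$ in Weyl-group orbits). What the argument needs is that every element of $\fp$ is $\Ad(K)$-conjugate \emph{into} $\fa$ (equivalently $\fp=\Ad(K)\fa$) together with the fact that a maximal abelian subspace is the centralizer in $\fp$ of any of its regular elements; these give conjugacy of maximal abelian subspaces, which is the standard theorem you should quote (Helgason, Ch.~V, or Knapp, Thm.~7.29). Second, in the non-maximal case your construction as written only parametrizes the \emph{image} of $a$: choosing an arbitrary linear isometry $\phi:\RR^k\to\fs$ does not give $a(x)=\pi(\iota(x)g)$ for the given map $a$, only for some reparametrization of it. The fix is to take $\phi$ to be the map induced by $a$ itself: after your translations you have a distance-preserving map $\tilde a:\RR^k\to\pi(\exp\fa)\cong\fa$ with $\tilde a(0)=0$, and a distance-preserving map between Euclidean spaces fixing the origin is automatically linear; its image is the subspace $\fs\subseteq\fa$, and then $a(x)=\pi(\exp(\phi(x))\,g)$ with $g$ the product of the group elements you translated by. With these two repairs the proof is correct.
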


Theorem \ref{thm: maximal flats} also implies an explicit description of the geodesics in $M$, as can be seen in the next chapter. 


\begin{claim}\label{claim: compact acts transitively on flats}
The group $K$ acts transitively on maximal flats containing $\pi(e)$. Moreover,
for every geodesic $\gamma$ containing $\pi(e)$ the subgroup $\stab_{K}(\gamma)$ acts transitively on the set of maximal flats which contain $\gamma$.
\end{claim}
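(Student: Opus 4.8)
The plan is to prove the statement in two parts, matching its two sentences. For the first part, let $F_1 = \pi(A)$ and $F_2$ be two maximal flats containing $\pi(e)$. By Theorem \ref{thm: maximal flats}, $F_2 = \pi(\iota(\RR^r)g)$ for some $g \in G$ and some continuous homomorphism $\iota:\RR^r \to A$; since $F_2$ contains $\pi(e)$, after adjusting $g$ within $K g$ we may arrange $F_2 = \pi(A')$ where $A' = g^{-1} A g$ is another Cartan torus through the identity. The content is then that any two Cartan tori of $G$ are conjugate by an element of $K$. I would deduce this from the compatibility between $A$ and $K$ in the Cartan decomposition (Definition \ref{de: cartan decomposition}): both $A$ and $A'$ arise as $\exp(\fp)$-pieces of maximal abelian subalgebras $\fa, \fa' \subset \fp$, where $\fg = \fk \oplus \fp$ is the Cartan decomposition associated to $K$; the standard fact that $K$ acts transitively on the set of maximal abelian subspaces of $\fp$ (equivalently, $\fp = \bigcup_{k \in K} \Ad(k)\fa$) then gives $k \in K$ with $\Ad(k)\fa = \fa'$, hence $k A k^{-1} = A'$, and correspondingly $F_2 = \pi(A' ) = \pi(Ak^{-1}) = F_1 \cdot k^{-1}$, i.e. $F_2$ is the image of $F_1$ under the isometry induced by right translation by $k^{-1}$, which fixes $\pi(e)$ since $k \in K$.

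For the second part, fix a geodesic $\gamma$ through $\pi(e)$. Lifting, $\gamma = \pi(\exp(tH)\,)$ for a unit vector $H \in \fp$ (using Theorem \ref{thm: maximal flats} in rank one along $\gamma$, or directly the description of geodesics through $\pi(e)$ as one-parameter subgroups $\exp(tH)$ with $H \in \fp$). A maximal flat $F$ contains $\gamma$ if and only if $F = \pi(\exp(\fa''))$ for a maximal abelian $\fa'' \subset \fp$ with $H \in \fa''$. So I must show: $\stab_K(\gamma)$ acts transitively on the maximal abelian subspaces of $\fp$ containing $H$. First, $\stab_K(\gamma) \supseteq Z_K(H) := \{k \in K : \Ad(k)H = H\}$ — in fact these are equal up to the (finite, harmless) issue of reversing orientation of $\gamma$, and $Z_K(H)$ already acts on the relevant set. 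Now given two maximal abelian $\fa_1, \fa_2 \ni H$ in $\fp$, consider the reductive subalgebra $\fm := \fz_\fg(H)$; it inherits a Cartan decomposition $\fm = (\fm \cap \fk) \oplus (\fm \cap \fp)$, and $\fa_1, \fa_2$ are both maximal abelian in $\fm \cap \fp$ (maximality in $\fp$ forces maximality in $\fm \cap \fp$ because $H$ is central in $\fm$). Applying the transitivity result from the first part to the group $M_0$ with maximal compact $K \cap M_0$ and Lie algebra $\fm$, there is $k \in K \cap M_0 \subseteq Z_K(H) \subseteq \stab_K(\gamma)$ with $\Ad(k)\fa_1 = \fa_2$, hence $k F_1 = F_2$.

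The main obstacle I anticipate is purely one of bookkeeping at the level the paper works at: namely, carefully justifying the passage from the geometric statements ("flat", "geodesic", "$\stab_K$") to the Lie-theoretic ones ($\fp$, maximal abelian subspaces, $Z_K(H)$), and invoking the right form of "$K$ acts transitively on maximal abelian subspaces of $\fp$" — this is classical (e.g. it underlies the uniqueness of the real rank and the $KAK$ decomposition) but the excerpt has not stated it, so I would either cite \cite{Metric Spaces} or \cite{borel} for it, or note that it follows from Theorem \ref{thm: maximal flats} itself: transitivity of $G$ on pairs (point, maximal flat through it) plus the fact that $K = \stab_G(\pi(e))$ immediately yields transitivity of $K$ on maximal flats through $\pi(e)$, which is exactly the first assertion, and the refinement in the second assertion follows by the same argument applied inside $Z_G(\gamma)$. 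I would lean on this last observation to keep the proof short, so that essentially no new input beyond Theorem \ref{thm: maximal flats} and the behaviour of $K$ as a point stabiliser is needed.
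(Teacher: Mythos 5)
Your proposal is correct and, in the form you say you would actually write it (part 1 via transitivity of $G$ on pairs (point, maximal flat) together with $K=\stab_G(\pi(e))$, part 2 by passing to the reductive centralizer of the direction $H\in\fp$ and applying part 1 there), it is essentially the same argument as the paper's, which uses $G':=\operatorname{Stab}_G(\ta)$ and its symmetric space $M'$ in exactly this way. The Lie-theoretic detour through $K$-conjugacy of maximal abelian subspaces of $\fp$ is a fine classical alternative but is not needed once you make the short observation you end with.
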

\begin{proof}
The group $G$ acts transitively on the maximal flats by \cite[Rem. 10.60]{Metric Spaces}. Moreover, since $A$ acts transitively on $\pi(A)$, $G$ acts transitively on pairs $(p,F)$ of a point $p$ in a maximal flat $F$. Since $K$ is the stabilizer of $\pi(e)$, we deduce that it acts transitively on the set of maximal flats containing $\pi(e)$. 

As for the second part, assume that $\gamma(t) = \pi(\exp(\ta t))$ for some $\ta\in \fa$. Note that 
\[
    G' := \operatorname{Stab}_G(\ta) = \{g\in G:Ad_g(\ta) = \ta\},
\]
is a reductive subgroup, as defined in \cite[Def. 10.56]{Metric Spaces}. We now remark that the first part of the proof, as well as the construction of the symmetric space in \cite[Thm. 10.58]{Metric Spaces}, follow for reductive subgroups, and not only semisimple groups. As defined in \cite[Thm. 10.58]{Metric Spaces}, the symmetric space $M'$ of $G'$ is a subspace of $M$, which contains all maximal flats of $M$ which contains $\gamma$. 
The second part of the claim now follows from the first part, applied to $M'$.
\end{proof}
The following corollary is an algebraic analog to Claim \ref{claim: compact acts transitively on flats}.
Recall that a Cartan algebra is a maximal Abelian subalgebra. 
\begin{cor}\label{cor: transitive on Cartan}
     The group $K$ acts transitively on the collection of Cartan algebras. For every $\ta\in \fg$ the stabilizer $\stab_K(\ta)$ acts transitively on the collection of Cartan Algebras containing $\ta$. \qed
\end{cor}



\subsubsection{Explicit construction}\label{sec: explicit construction}
We follow standard notation and results, see \cite[\S 2]{geometry} and \cite[\S IV]{helgarson}. 
Assume in addition that $G$ is semisimple, i.e., the Killing form on $\fg = \operatorname{Lie}(G)$ is nondegenerate. Let $K$ be a maximal compact subgroup of $G$. Then, $M: = K\backslash G$ is a manifold. 
Fixing $o=[K]\in M$ which is stabilized by $K$, define a projection $\pi: G\rightarrow M$ which is given by \[
\pi(g)=og,\] 
for any $g\in G$.  
Then, $G$ acts on $M$ by right multiplication. 


\begin{de}[Cartan decomposition of $\fg$]\label{de:Cartan decom algebra}
Recall that we fixed a maximal compact subgroup $K\subseteq G$. Denote its Lie algebra by $\fk = \operatorname{Lie}(K)$. The Killing form $B_\fg(\cdot,\cdot)$ is negative definite on $\fk$, and positive definite on its orthogonal complement $\fp = \fk^\perp$. 

The decomposition $\fg = \fk \oplus \fp$ is the \emph{Cartan decomposition of $\fg$}.
Note that the adjoint action of $K$ on $\fg$ preserves $\fk$ and $B_\fg$, and hence preserves $\fp$ as well.
\end{de}

\begin{de}[Metric on $M$]\label{de:sym-metric}
Fixing $o=[K]\in M$ which is stabilized by $K$, note that $T_o M = \fg/\fk\cong \fp$, thus define the positive definite bilinear form $B_o$ on $T_o M$ to be the the restriction of $B_\fg$ to $\fp$. Since $B$ is $K$ invariant, we may use the $G$ action on $M$ and define a Riemannian metric $B_p$ on $T_pM$ for every $p\in M$.
As usual in Riemannian geometry, 
for every curve segment $\gamma:[0,1]\rightarrow M$ we define the \emph{arc length} of $\gamma$ by
\begin{equation}\label{eq: arc length}
    L(\gamma)=\int_0^1 B_{\gamma(t)}(\dot\gamma(t),\dot\gamma(t))^{1/2}dt,
\end{equation}
and the metric 
\[d_M(p,q):= \inf\{L(\gamma)\ \ |\ \ \gamma:[0,1]\to M, \gamma(0) = p,\gamma(1)=q\}.\]
\end{de}
\begin{de}[Cartan decomposition of $G$]
\label{de: cartan decomposition}
Fix a maximal abelian $\fa\subseteq \fp$. Then $A = \exp \fa$ is a maximal split torus in $G$, and one may write (see for example \cite[Thm. 7.39]{knapp}) 
\begin{equation}
    G=KAK. 
\end{equation}
In this case $A$ is called \emph{Cartan torus}.
\end{de}

The next result follows directly from Theorem \ref{thm: maximal flats} (see also \cite[2.4.2]{geometry}). 
\begin{cor}\label{cor: geodesics description}
All geodesics starting at $\pi(e)$ are of the form $t\mapsto \pi(\exp(t\ta)g)$ for some $\ta\in \fa$ with $B(\mathtt a, \mathtt a) = 1$ and $g\in K$. Alternatively, such geodesic is of the form $t\mapsto \pi(\exp(t\mathtt{p}))$ for some $\mathtt{p}\in \fp$ with $B(\mathtt p, \mathtt p) = 1$. 
\end{cor}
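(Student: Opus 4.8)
The plan is to read everything off the algebraic description of flats in Theorem \ref{thm: maximal flats} together with the metric introduced in Definition \ref{de:sym-metric}. First I would use that $M$, being a symmetric space of non-compact type, is geodesically complete, so a geodesic $\gamma$ starting at $\pi(e)$ extends to a complete geodesic line $\tilde\gamma:\RR\to M$ with $\tilde\gamma|_{[0,\infty)}=\gamma$. Since $M$ is Hadamard, geodesics are globally minimizing, so $\tilde\gamma$ is an isometric embedding of $\RR$, i.e.\ a one-dimensional flat in the sense of Definition \ref{de: flats}. Applying Theorem \ref{thm: maximal flats} to it yields $h\in G$ and a continuous homomorphism $\iota:\RR\to A$ with $\tilde\gamma(x)=\pi(\iota(x)h)$; as $\RR$ and $A=\exp\fa$ are connected, $\iota(x)=\exp(x\ta)$ for a unique $\ta\in\fa$. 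Evaluating at $x=0$ gives $\pi(e)=\gamma(0)=\pi(h)$, hence $h\in K$ because $K=\stab_G(\pi(e))$; writing $k:=h$ we obtain $\gamma(t)=\pi(\exp(t\ta)k)$.

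Next I would pin down the normalization. The velocity at $t=0$ of $t\mapsto\pi(\exp(t\ta))$ is $d\pi_e(\ta)$, which under the identification $T_{\pi(e)}M\cong\fp$ of Definition \ref{de:sym-metric} is $\ta$ itself, and its squared $B_{\pi(e)}$-length is $B_\fg(\ta,\ta)$. Right translation by $k$ is an isometry of $M$ (the metric is $G$-invariant by construction), so $\gamma$ is a constant-speed geodesic of speed $B_\fg(\ta,\ta)^{1/2}$, and it is unit-speed exactly when $B(\ta,\ta)=1$. This gives the first description; conversely every curve of this form is a geodesic starting at $\pi(e)$, since its image is a translate of $\pi(A)$, which is a flat by Remark \ref{rem: flats come from abelian}.

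For the second description I would pass between $\fa$ and $\fp$ by $\Ad(K)$. Using $\pi(kg)=\pi(g)$ for $k\in K$,
\[
\pi(\exp(t\ta)k)=\pi\bigl(k\,\exp(t\,\Ad_{k^{-1}}\ta)\bigr)=\pi\bigl(\exp(t\,\Ad_{k^{-1}}\ta)\bigr),
\]
and $\mathtt p:=\Ad_{k^{-1}}\ta$ lies in $\fp$ with $B(\mathtt p,\mathtt p)=B(\ta,\ta)=1$, because $\Ad(K)$ preserves both $\fp$ and the Killing form. Conversely, given $\mathtt p\in\fp$ with $B(\mathtt p,\mathtt p)=1$, the standard fact that every element of $\fp$ is $\Ad(K)$-conjugate into the maximal abelian subalgebra $\fa$ (see e.g.\ \cite{knapp}) lets me write $\mathtt p=\Ad_k\ta$ with $k\in K$ and $\ta\in\fa$, and the same computation run backwards puts $t\mapsto\pi(\exp(t\mathtt p))$ into the first form; the geodesic property and the parametrization carry over.

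I do not anticipate a real obstacle: the substantive input is Theorem \ref{thm: maximal flats}, and the remaining points that need care are routine — geodesic completeness of $M$ (used to extend a ray to a line), the identification of the speed of $t\mapsto\pi(\exp(t\ta))$ with $B(\ta,\ta)^{1/2}$, and the Cartan-type fact $\fp=\Ad(K)\fa$. If one prefers to avoid the extension-to-a-line step, an alternative is to start from the initial velocity $v\in T_{\pi(e)}M\cong\fp$ of $\gamma$, write $v=\Ad_k\ta$ with $\ta\in\fa$, and identify $\gamma$ with $t\mapsto\pi(\exp(t\Ad_k\ta))$ directly; but this essentially reproves a special case of Theorem \ref{thm: maximal flats}.
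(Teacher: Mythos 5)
Your argument is correct and follows exactly the route the paper indicates: the paper derives this corollary directly from Theorem \ref{thm: maximal flats}, and your write-up simply fills in the routine details (extending the ray to a line, reading off $h\in K$ from $\gamma(0)=\pi(e)$, the unit-speed normalization via $B$, and the passage between the two forms using $\fp=\Ad(K)\fa$ and the $K$-invariance of $\pi$ and of the Killing form). Nothing further is needed.
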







\subsubsection{Example: $G=\operatorname{SL}_n(\RR)$}
\label{sssec: SLnR}
In this case $K=\operatorname{SO}(n)$ is a maximal compact subgroup of $G$, and $K\backslash G$ is isomorphic to the space of positive-definite symmetric $n\times n$-matrices, with determinant $1$, denoted by $P(n,\RR)$. The group $G$ acts on $P(n,\RR)$ by conjugation. Then, $K$ is the stabilizer of the identity matrix $I$. 

The Cartan decomposition is defined by $\fp,\fk$ being the symmetric and antisymmetric matrices in $\mathfrak{sl}_n=\operatorname{Lie}(G)$.
The tangent
space $T_pP(n,\RR)$ at a point $p\in P(n,\RR)$ is naturally isomorphic (via translation) to $\fp$, and the pseudo-Riemannian structure there is defined by \[
B_{p}(Y,Z)=\operatorname{Tr}(p^{-1}Yp^{-1}Z). \] 
The maximal $\RR$-split torus $A$ here is the set of diagonal matrices. 
For more information see \cite[{\S II.10}]{Metric Spaces}. 

\subsection{Representations of \texorpdfstring{$G$}{G} and short vectors} \label{sec:representations}
Fix a subfield $\ell\subseteq \RR$. We assume that $G$ is defined over $\ell$, and frequently use some of the algebraic structure. The reader should note that the Cartan decomposition is not necessarily defined over $\ell$. 
We use the following standard notation of arithmetic groups (see~\cite{borelf,borel}). Fix a subfield $\ell\subseteq \RR$ and a maximal $\ell$-split torus $A_\ell$ in $G$, and denote its Lie-algebra by $\fa_\ell$. 
We can conjugate $K$ and obtain that $\fa_\ell \subset \fp$. Note that $\fp$ is not necessarily defined over $\ell$. 

Given an $\ell$-representation $\varrho:G\rightarrow\gl(V)$, we denote by $\Phi_\varrho$ the set of \emph{$\ell$-weights} of $G$, i.e., the set of characters $\lambda\in \fa^*_\ell$ such that the subspace \[
V_\lambda=\left\{v\in V:\text{for all }a=\exp(\ta)\in A_\ell,\: \varrho(a)v=\exp(\lambda(\ta))v\right\} \]
is not trivial. The space $V_\lambda$ is called the \emph{weight space corresponding to $\lambda$}, and elements of $V_\lambda$ are called \emph{weight vectors corresponding to $\lambda$}.
Then, there is a decomposition 
\begin{equation}\label{eq: wheight decomposition}
    V=\bigoplus_{\lambda\in\Phi_\varrho}V_\lambda. 
\end{equation}

The following lemma defines a `nice' quadratic form on $V$ which we use to define a norm on it. It is proved in the Appendix. 

\begin{lem}[Construction of bilinear form]\label{lem: construction of bilinear form}
If $\ell = \RR$ and $A$ is a cartan torus, then there is a $K$-invariant positive bilinear form $\bra \cdot, \cdot\ket$ on $V$ so that the linear spaces $V_\lambda$ are orthogonal with respect to it. 
\end{lem}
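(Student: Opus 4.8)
The plan is to produce the bilinear form by an averaging (Weyl unitary trick style) argument combined with a weight-space orthogonalization, and to check that the two operations are compatible. Concretely, I would proceed as follows.

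\textbf{Step 1: Start from the maximal torus and diagonalize.}
Fix the maximal $\RR$-torus $A\subset G$ as in the statement, with the weight decomposition $V=\bigoplus_{\lambda\in\Phi_\varrho}V_\lambda$ of \eqref{eq: wheight decomposition}. Choose \emph{any} positive definite bilinear form $\bra\cdot,\cdot\ket_0$ on $V$ for which the weight spaces $V_\lambda$ are mutually orthogonal — for instance, pick an arbitrary positive definite form on each $V_\lambda$ and declare the distinct weight spaces orthogonal. This is just linear algebra since $V$ is the direct sum of the $V_\lambda$.

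\textbf{Step 2: Average over $K$ to gain $K$-invariance.}
Since $K$ is compact, equip it with a Haar probability measure $\mathrm{d}k$ and set
\[
\bra u,v\ket := \int_K \bra \varrho(k)u,\varrho(k)v\ket_0 \dd k .
\]
This is again positive definite (an average of positive definite forms) and is $K$-invariant by translation invariance of Haar measure. The point requiring care is that averaging might destroy the orthogonality of the weight spaces obtained in Step~1. I expect this \emph{not} to happen for the following reason: the normalizer structure relating $A$ and $K$ — specifically the compatibility in the Cartan decomposition (Definition~\ref{de: cartan decomposition}), together with the fact that $\Ad(K)$ preserves $\fk$, $\fp$ and the Killing form — forces the $K$-action to permute the weight spaces in a controlled way. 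More precisely, I would use that the adjoint action of $K$ on $\fg$ preserves $\fp$, hence the real weights behave well: for $k$ in the (finite) Weyl-type group $N_K(A)/Z_K(A)$, $\varrho(k)$ sends $V_\lambda$ to $V_{k\cdot\lambda}$, while for $k$ in the centralizer $Z_K(A)$, $\varrho(k)$ preserves each $V_\lambda$. In either case distinct weight spaces are sent to distinct weight spaces, so $\bra\varrho(k)u,\varrho(k)v\ket_0=0$ whenever $u\in V_\lambda$, $v\in V_\mu$ with $\lambda\neq\mu$. Integrating, $\bra u,v\ket=0$, i.e. the averaged form still has the $V_\lambda$ orthogonal.

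\textbf{Step 3: Assemble.}
Combining Steps 1--2, $\bra\cdot,\cdot\ket$ is a $K$-invariant positive definite bilinear form on $V$ with respect to which the weight spaces $V_\lambda$ are pairwise orthogonal, which is exactly the assertion. One then defines $\norm{v}:=\bra v,v\ket^{1/2}$.

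\textbf{Main obstacle.} The only genuinely nontrivial point is Step~2: verifying that $K$-averaging preserves weight-space orthogonality. The clean way to see it is the observation that $K$ acts on the \emph{set} $\Phi_\varrho$ of real weights (through $N_K(A)$) by permutations, so $\varrho(k)$ maps the weight decomposition to itself as a permutation of summands; no ``mixing'' of distinct weight spaces occurs, and orthogonality is preserved summand by summand. If one prefers to avoid invoking the $N_K(A)/Z_K(A)$ description directly, an alternative is to first average only over the compact group $Z_K(A)$ (which fixes each $V_\lambda$ setwise, so orthogonality is obviously preserved) and then note that the full form can be taken $K$-invariant because $A$ and $K$ together generate enough of $G$; but the permutation-of-weights argument is the most transparent and is what I would write up.
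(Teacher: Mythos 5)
There is a genuine gap in Step 2. Your claim that every $k\in K$ sends weight spaces to weight spaces is false: only elements of $N_K(A)$ do. You treat $K$ as if it were exhausted by $Z_K(A)$ and the Weyl-type group $N_K(A)/Z_K(A)$, but $N_K(A)$ is a very small subgroup of $K$ in general. Already for $G=\SL_2(\RR)$, $K=\operatorname{SO}(2)$, $A$ the diagonal torus and $\varrho$ the standard representation on $\RR^2$, the weight spaces are the two coordinate axes, and a rotation by a generic angle maps $e_1$ to a vector lying in no weight space at all. So the integrand $\bra \varrho(k)u,\varrho(k)v\ket_0$ has no reason to vanish for $u\in V_\lambda$, $v\in V_\mu$, $\lambda\neq\mu$, and your argument only proves invariance of orthogonality under averaging over $N_K(A)$, not over $K$. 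The fallback you sketch (average over $Z_K(A)$ and then appeal to ``$A$ and $K$ generate enough of $G$'') is not an argument.

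The gap cannot be papered over by invoking $K$-invariance alone, because $K$-invariance of a positive form does \emph{not} imply orthogonality of the $A$-weight spaces. For instance, on $V=\RR^2\oplus\RR^2$ (two copies of the standard representation of $\SL_2(\RR)$) the form $\bra (u_1,u_2),(v_1,v_2)\ket = \bra u_1,v_1\ket+\bra u_2,v_2\ket + c\left(\omega(u_1,v_2)+\omega(v_1,u_2)\right)$, with $\omega$ the symplectic form and $c$ small, is symmetric, positive definite and $\operatorname{SO}(2)$-invariant, yet it pairs $(e_1,0)\in V_\lambda$ nontrivially with $(0,e_2)\in V_{-\lambda}$. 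So the real content of the lemma is to produce a $K$-invariant form with the \emph{additional} property that $A$ (equivalently $\exp\fp$) acts by self-adjoint operators, which then forces the weight spaces, being simultaneous eigenspaces for distinct characters, to be orthogonal. This is exactly what the paper's proof supplies: it complexifies, uses the maximal compact $K^\CC\supset K$ of $G^\CC$ and the unitarian trick together with the uniqueness of the invariant Hermitian form on irreducibles (plus a Galois-descent argument and a separate treatment of the case where $\varrho\otimes\CC$ splits as $\sigma\oplus\sigma^{\conjj}$) to obtain a real $K$-invariant form, and then verifies by a Zariski-density argument that $\varrho(A)$ is self-adjoint with respect to it. Your proposal is missing this self-adjointness ingredient entirely, and as it stands the orthogonality conclusion does not follow.
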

Since $\fa_\ell \subset \fp$, we deduce that $A_\ell$ is contained in some Cartain torus, and hence we may apply Lemma \ref{lem: construction of bilinear form} also for $A_\ell$.

\subsection{Parabolic subgroups and their properties}\label{sec:parabolic subgroups}
We continue to use the notation of \S \ref{sec:representations}
We use standard notation and results about parabolic groups in symmetric spaces (see \cite[\S 11]{borel}, \cite[\S 2.17]{geometry}, or \cite[\S II.10]{Metric Spaces}). We also prove some properties of them to be used in later chapters. 

The classical, algebraic, definition of a \emph{parabolic group} is a closed subgroup $P$ of $G$ so that $G/P$ is a projective variety. 
In our view of $G$, as a group acting on a symmetric space, the following, more geometrical, definition of a parabolic group is more informative. See \cite[Cor. 11.2]{borel} and \cite[Prop. 2.6]{mumford}. 

\begin{de}[Parabolic and Unipotent subgroups]
A subgroup $P$ is called \emph{parabolic} if it is of the form \[
P=P_{\ta}:=\left\{g\in G:\lim_{t\rightarrow\infty}\exp(-t\ta)g\exp(t\ta)\text{ exists}\right\}, \]
where $\ta\in\fp$.
If $G$ is defined over $\ell$ then the group $P$ is called \emph{$\ell$-parabolic} if it is defined over $\ell$ as an algebraic group. It may be an $\ell$-parabolic even if $\ta$ is not $\ell$-algebraic.
There are many different elements $\ta$ defining the same $P_\ta$. 

A Borel subgroup of $G$ is a maximal closed, connected, solvable, subgroup of $G$. 
The \emph{unipotent radical} of a parabolic group $P$ is the unipotent part of the  intersection of all Borel subgroups which are contained in $P$. Explicitly, the unipotent radical of $P_{\ta}$ is \[
U_{\ta}=\left\{g\in G:\lim_{t\rightarrow\infty}\exp(-t\ta)g\exp(t\ta)=e\right\}. \]
The unipotent radical is always nilpotent, and if $P$ is an $\ell$ parabolic then the unipotent radical is also defined over $\ell$. Moreover, the unipotent radical depends only on $P_\ta$ and not on $\ta$.
\end{de}


The following subset of the semisimple part of the  Levi decomposition of $P_\ta$ is of special interest for us 
\begin{align}\label{eq: levi element of Pa}
    T_{\ta}&:=\exp\{\stab_\fg(\ta)\cap\fp\},
\end{align}
where $\stab_\fg(\ta)=\{\mathtt{b}\in\fg:[\ta,\mathtt{b}]=0\}$. In particular, $T_\ta\subset\stab_{G}(\ta)$. Note that $T_\ta$ is not a group.
Let $K_\ta:=K\cap P_\ta$. 

\begin{lem}[{Generalized Iwasawa decomposition, \cite[Prop. 2.17.5]{geometry}}]\label{lem: Iwasawa}
For any $\ta\in\fp$ we have $P_\ta=K_\ta\cdot T_\ta\cdot U_\ta$ and $G = K\cdot T_\ta\cdot U_\ta$. Moreover, in both equations the indicated decomposition is unique. It follows that $G = K P_{\ta}$.
\end{lem}
\begin{remark}
Our symbols $\ta, P_\ta, U_\ta, T_\ta, K_\ta$ correspond to the symbols $X, G_X, N_X, A_X, K_X$ in \cite{geometry}.
\end{remark}

\begin{claim}\label{claim: U fixes highest weight}
    Let $P = P_\ta$ for some $\ta\in \fp$. 
    Let $G\acts V$ be a representation of $G$ and $v\in V$ be a vector such that $\RR v$ is $P$-invariant. 
    Then $v$ is $U_\ta$-invaraint. 
\end{claim}
\begin{proof}
    Since $\exp(t\ta)\in P$ for every $t\in \RR$, we deduce that $\exp(t\ta)v = \lambda_t v$ for some $\lambda_t\in \RR$. 
    Hence, for every $u\in U_\ta$ we have \[uv = \exp(-t\ta)u\exp(t\ta)v \xrightarrow{t\to \infty }v.\]
\end{proof}
\begin{ex}[Example \ref{sssec: SLnR} continued]
    Assuming $G=\SL_n(\RR)$, every parabolic subgroup is the stabilizer of a flag $0<V_1<\cdots<V_{s-1} < V_s = \RR^n$ in the space of flags. 
    Thus, every parabolic subgroup is conjugated to a group consisting of all block upper triangular matrices.
    
    For example, when $n=3$, there are, up to conjugation, three proper parabolic subgroups:
    \[
    \begin{pmatrix}
        * & * & *\\ & *& * \\  & & * \end{pmatrix},\quad
    \begin{pmatrix}
        * & * & *\\ *& *& * \\  & & * \end{pmatrix},\quad
    \begin{pmatrix}
        * & * & *\\ & *& * \\  & *& *
    \end{pmatrix}, \]  where the leftmost one is a minimal parabolic.
\end{ex}

\subsection{The fundamental representation}\label{sec:fundamental representation}
In this section, we keep the setting and notation of \S \ref{sec:representations} and \S\ref{sec:parabolic subgroups}. We also follow standard notation and results (see \cite{fultonharris, knapp, BT}).

The Killing form defines an inner product on $\fa$ (and so also on $\fa^*$), which we denote by $\langle\cdot,\cdot\rangle$. 

\begin{de}[The Root system and Weyl group]\label{defn:root and weyl}
Let $\fa_\ell$ be a maximal $\ell$-split torus of $\fg$.  
We denote by $\Phi_\ell$ the root system of $\fa_\ell$, i.e. the set of non-trivial eigenvalues with respect to the adjoint action of $\fa_\ell$ on $\fg$, and by $W(\Phi_\ell)$ the Weyl group of $\Phi_\ell$, i.e. the group generated by the reflections $w_\lambda$, $\lambda\in\Phi_\ell$, defined by
\begin{equation}\label{eq: weyl defn}
    w_{\lambda}(\chi)= \chi - 2\frac{\langle\chi,\lambda\rangle}{\langle\lambda,\lambda\rangle}\lambda,
\end{equation}
for any characters $\chi\in\fa^*_\ell$. 
\end{de}

\begin{lem}[{\cite[Prop. 2.68]{knapp}}] \label{lem:Weyl so dominant}
For any $\chi\in\fa^*_\ell$ there exists an element $w\in W_\ell$ so that for every $\alpha\in\Delta_\ell$ \[
\langle w(\chi),\alpha\rangle\ge0.\]
\end{lem}

\begin{remark}\label{rem:Weyl in K}
    By \cite[\S 11.19]{borel}, the Weyl group can be realized as $$N_G(A_\ell)/Z_G(A_\ell),$$ where $N_G(A_\ell)$ is the normalizer of $A_\ell$ in $G$ and  $Z_G(A_\ell)$ is the centralizer of $A_\ell$ in $G$. 
    Moreover, if $\ell = \RR$ and $A_\ell$ is the Cartan torus, \cite[\S IV.6]{knapp} shows that the representatives of $W_\ell$ in $N_G(A_\ell)/Z_G(A_\ell)$ can be chosen to be from $K$. 
\end{remark}

Recall that we denote $\fa=\fa_\RR$.
For every $\RR$-Weyl chamber $\fa^\circ \subset\fa_\ell-\bigcap_{w\in W(\Phi_\RR)}\ker(w)$, and any $\ta\in\fa^\circ$, the group $U_{\ta}$ depends only on $\fa^\circ$ and not on $\ta$. It is denoted $N_{\fa^\circ}$. In particular, as a special case of Lemma \ref{lem: Iwasawa} we have the decomposition 
\begin{equation} \label{eq: Iwasawa}
G=K\cdot A\cdot N_{\fa^\circ}. 
\end{equation}


\begin{de}[The simple system and the highest weight]\label{def: fundumental weights} See \cite{BT} for the definitions and claim below.
Let $\Delta_\ell=\{\alpha_1,\dots,\alpha_r\}$ be an $\ell$-simple system of $\Phi_\ell$. 
Given an $\ell$-representation $\varrho:G\rightarrow\GL(V)$ and $\lambda\in\Phi_{\varrho}$, we say that $\lambda$ is the $\ell$-highest weight of $\varrho$ if for any $\lambda'\in\Phi_\varrho$ we have \[
\lambda-\lambda'\in\operatorname{span}_{\NN\cup\{0\}}\Delta_\ell. \] 
Note that a different choice of simple system yields a different highest weight.  
\end{de}

The next result follows from the construction in \cite[\S7]{borel3}. 
\begin{lem}
For any irreducible $\ell$-representation $\varrho$ there exists an $\ell$-highest weight. 
\end{lem}

\begin{lem}\label{lem:stabilized by parab}
Let $\varrho:G\rightarrow\GL(V)$ be an $\ell$-representation of $G$, $P$ be a parabolic subgroup of $G$ and $v\in V$ satisfy that $\varrho(P)v\subseteq\RR v$. Then, there exists an irreducible sub-representation $\varrho':G\rightarrow\GL(V')$ of $\varrho$ for some $V'<V$ so that $v$ is its $\ell$-highest weight vector with respect to some choice of a simple system. 
\end{lem}

\begin{proof}
    Let $V'$ be the subspace of $V$ spanned by $\varrho(G)v$ and $\varrho':G\rightarrow\GL(V')$ be the implied sub-representation of $\varrho$. If we show that $\varrho'$ is irreducible, then the claim will follow from \cite[\S7]{borel3}. 

    Let $G^\CC$ be the complexification of $G$, $\varrho^\CC:G^\CC\to GL(V^\CC)$ be the complexification of $\varrho$. It is enough to show that $\varrho^\CC(G^\CC).v$ generate an irreducible representation in $V^\CC$. 
    Let $B<P^\CC$ be a Borel subgroup of $G^\CC$. 
    Let $\sigma:B\to \CC^\times$ be the character of $B$ using its action on $\CC v$. 
    The construction of the Verma-module (see \cite[\S V.3]{knapp}) provides us with the unique irreducible representation of $G^\CC$ with a vector $v'$, on which $B$ acts by $\sigma$, and concludes the proof.
\end{proof}

Let us look at the adjoint representation $\Ad:G\rightarrow\operatorname{Aut}(\fg)$. As described in \S\ref{sec:representations}, it has the decomposition \[
\fg=\bigoplus_{\lambda\in\Phi_\ell=\Phi_{\Ad}}\fg_{\lambda}.\]

\begin{de}
For any $1\le i\le r$ let $P_i$ be the $\ell$-parabolic subgroup of $G$ with Lie algebra
\[\operatorname{Lie}(P_i) = \bigoplus_{\lambda\not\le -\alpha_i} \fg_\lambda\]
(see \cite[\S V.7]{knapp}).
\end{de}
\begin{remark}
    For any $1\le i\le r$ the set $\{\lambda\in \Phi_\ell: \lambda\not\le -\alpha_i\}$ is equal to the collection of $\lambda \in \Psi$ such that in the representation $\lambda = \sum_{j=1}^r c_j \alpha_j$ we have $c_i\ge0$. 
\end{remark}

For any $1\le i\le r$ let 
\[
\chi_i:=\sum_{\lambda\not \le -\alpha_i}\lambda\dim\fg_\lambda,\quad d_i:=\sum_{\lambda\not \le -\alpha_i}\dim\fg_{\lambda} = \dim \operatorname{Lie}(P_i).
\]
Let $\bigwedge^{d_i}\fg$
be the implied wedge representation such that
for any $g\in G$ and $\bigwedge_{j=1}^{d_i}v_j\in \bigwedge^{d_i}\fg$,
\[
\hat\varrho_i(g)\left(\bigwedge_{j=1}^{d_i}v_j\right) =\bigwedge_{j=1}^{d_i}\Ad(g)(v_j).\]
Let $(v_{j, i})_{j=1}^{d_i}$ be a basis of $\operatorname{Lie}(P_i)$. Then $v_i = \bigwedge_{j=1}^{d_i} v_{j, i}$ satisfies that $\RR v_i$ is $P_i$ invariant. The $G$-sub-representations $\varrho_i:G\to \GL(V_i)$ generated by $v_i$ is irreducible by Lemma \ref{lem:stabilized by parab}, and is termed \emph{fundamental representations}. The vector $v_i$ is of highest weight $\chi_i\in\Phi_{\varrho_i}$ in $V_i$.


The following result is known to experts, but as we did not find it in the literature, we prove it here. It shows that the representations constructed in this section are `almost' the $\ell$-fundamental representations, which are the ones who satisfy the conclusion of Proposition \ref{prop:fundamental rep} with minimal $m_i$. 

\begin{prop}\label{prop:fundamental rep}
For any $1\le i\le r$ the highest weight satisfies
\begin{equation}\label{eq: fundamental weight}
    \langle\alpha_j,\chi_i\rangle=m_{i}\delta_{i,j},
\end{equation}
where $m_i$ is a positive integer and $\delta_{i,j}$ is Kronecker delta, for all $1\le j\le r$. 
\end{prop}

\begin{proof}
In a similar fashion to \cite[Lemma 5.1]{tamam2}, for any $j\neq i$, $\chi_i$ is invariant under the action of $w_{\alpha_j}$ (see Definition \ref{defn:root and weyl}), which implies \eqref{eq: fundamental weight} when $j\neq i$. Since $\chi_i$ is a non-zero, non-negative integer combination of the $\ell$-simple system, \eqref{eq: fundamental weight} must hold for $j = i$ as well. 
\end{proof}



\begin{de}[Attaching a character to a torus element]\label{defn: character for ta}
For any $\ta\in\fa_\ell$ denote by $\chi_\ta$ the character on $\fa_\ell$ which is defined by \[
\mathtt{b}\mapsto\frac{\langle\ta,\mathtt{b}\rangle}{\langle\ta,\ta\rangle}. \]
Similarly, for any character $\chi\in\fa_\ell^*$ we denote by $\ta_\chi$ the element in $\fa_\ell$ which satisfies $\chi=\chi_{\ta_\chi}$. 

Since $\{\chi_1,\dots,\chi_r\}$ spans $\fa_\ell^*$, there exist $a_1,\dots, a_r\in\RR$ such that 
\begin{equation}\label{eq: chi_tau decomposition}
    \chi_{\ta}=\sum_{i=1}^r a_i\chi_i.
\end{equation}
Moreover, fixing $\Delta_\ell$ determines a positive $\ell$-Weyl chamber by \[
\fa_\ell^+:=\{\ta:\forall\alpha\in\Delta_\ell,\quad\alpha(\ta)>0\}. \]
For any $\ta$ there exists a choice of $\Delta_\ell$ so that  $\ta\in\fa_\ell^+$. In that case, the coefficients $a_1,\dots,a_r$ in \eqref{eq: chi_tau decomposition} are non-negative. 
\end{de}

The next property of $\fa_{\ell}^+$ follows from Definition \ref{def: fundumental weights}, \eqref{eq: weyl defn}, as the set of weights of a given representation is preserved under the action of the Weyl group. 
\begin{lem}\label{lem: highest weight pos Weyl}
If $\lambda\in\fa_{\ell}^*$ is the $\ell$-highest weight of some irreducible representation of $G$, then $\lambda\in\overline{\fa_{\ell}^+}$. 
\end{lem}

\begin{claim}\label{claim: parabolic expansion}
    For every $\ta \in \overline{\fa^+_\ell}$, one can write $\ta = \sum_{i=1}^r c_i \ta_{\chi_i}$, with $c_i \ge 0$.
    Then $P_{\ta}= \bigcap_{c_i>0} P_i$.
    In particular, $P_i = P_{\ta_{\chi_i}}$.
\end{claim}
\begin{proof}
    We prove the claim by showing the equality of the Lie algebras. 

    First, we show $\bigcap_{c_i>0} \operatorname{Lie}(P_i)\subseteq \operatorname{Lie}(P_{\ta})$. Let \[
    I_\ta=\{\lambda\in\Phi_{\ell}:\forall c_i>0,\lambda\not\le-\alpha_i\}.\]
    Then, by \eqref{eq: fundamental weight}, for any $\lambda\in I_{\ta}$ we have $\langle\lambda,\chi_i\rangle\ge0$, which implies 
    \begin{equation}\label{eq: diagonal on parabolic}
        \lambda(\ta)=\sum_{i=1}^r c_i\langle\lambda,\chi_i\rangle\ge0
    \end{equation} 
    Let $X\in\bigcap_{c_i>0}\operatorname{Lie}P_i$. Since $\bigcap_{c_i>0}\operatorname{Lie}P_i=\bigoplus_{\lambda\in I_\ta}\fg_\lambda$, we may write $X=\sum_{\lambda\in I_\ta}X_\lambda$, where for any $\lambda$ we have $X_\lambda\in\fg_\lambda$.  Then, we can compute
    \begin{align}\label{eq: parabolic element decom}
        \operatorname{Ad}(\exp(-t\ta))(X)&=\sum_{\lambda\in I_\ta}\exp(-t\lambda(\ta))X_\lambda.
    \end{align}
    By \eqref{eq: diagonal on parabolic}, for any $\lambda\in I_\ta$, the value of $\lim\exp(-t\lambda(\ta))$ is either $0$ or $1$, and so the above converges, implying $X\in\operatorname{Lie}(P_{\ta})$.  

    To see that $\bigcap_{c_i>0} \operatorname{Lie}(P_i)\supseteq \operatorname{Lie}(P_{\ta})$, we note that by \eqref{eq: fundamental weight} for any $\lambda\in\Phi_{\operatorname{Ad}}\setminus I_\ta$ we have $\langle\lambda,\chi_i\rangle<0$, which implies 
    \begin{equation*}
        \lambda(\ta)=\sum_{i=1}^r c_i\langle\lambda,\chi_i\rangle<0. 
    \end{equation*} 
    Hence, for a point $X$ with a non-trivial factor in $\bigoplus_{\lambda\in \Phi_{\operatorname{Ad}}\setminus I_\ta}\fg_\lambda$, the sum in \eqref{eq: parabolic element decom} does not converge.  
\end{proof}


\section{Busemann functions}\label{sec:Busemann functions} 
In this section we use the notation and results of \S \ref{sec:symmetric spaces} and \S\ref{sec:parabolic subgroups}. We study a semisimple Lie group $G$ of noncompact type and the corresponding symmetric space $M$.

The main goal of this section is to show equivalent descriptions for Busemann
functions. In particular, the second part of Theorem \ref{thm: main} follows from the last result of this section, Theorem \ref{thm: f_v descends to Busemann}. 

\begin{de}[Busemann functions]\label{defn: Busemann function}
Let $d$ be a right-invariant Riemannian metric on $M$.  
Given a geodesic ray $\gamma:[0,\infty)\rightarrow M$, the function $\beta_\gamma:M\rightarrow\RR$ defined by
\begin{equation}\label{eq: busemann def}
    \beta_\gamma(x)=\lim_{t\rightarrow\infty}(d(x,\gamma(t))-t)
\end{equation}
is called the \emph{Busemann function associated to $\gamma$} (see \cite[\S II.8.17]{Metric Spaces}). 
\end{de}

\begin{ex}[Busemann functions on Euclidean spaces]\label{ex: bus on euclid}
    Let $u\in \RR^\ell$ be a vector with $\|u\|=1$. Then, the Busemann function of the geodesic $t\mapsto tu$ is $v\mapsto -\bra u,v\ket$.
\end{ex}

\begin{de}[The modular function]\label{de: modular function}
Given a field $\ell$ and an $\ell$-algebraic group $P$, denote by $\delta_P:P\to \ell^\times$ \emph{the $\ell$-modular function associated with $P$}. 
That is, assuming $\mu_P$ is the left Haar measure on $P$, for a Borel subset $S\subseteq P$ with positive $\mu_P$-measure and $h\in P$, we have
\[\delta_P(h)=\frac{\mu_P(Sh^{-1})}{\mu_P(S)}\]
(see\cite[\S VIII.2]{knapp}).
Recall that $\delta_P$ is a group homomorphism.
\end{de}
The following known result gives a simple description of the modular function associated with $P$, if $P$ is a Lie group. 
\begin{lem}[{\cite[Prop. 8.27]{knapp}}]\label{lem: moduler is det}
If $P$ is a Lie group, then the modular function $\delta_{P}$ of $P$ is given by\[
\delta_P(h)=|\det(\Ad_P(h))|,\]
where $\Ad_{P}(h)$ is the adjoint action of $h$ on $\operatorname{Lie}(P)$.
\end{lem}

\begin{de}[Positive homomorphism]
\label{de: positive hom}
Let $P\subset G$ be a parabolic subgroup. A homomorphism $f:P\to \RR$ is called \emph{positive} if it is a linear combination with nonnegative coefficients of $\log \delta_{P'}$ for maximal parabolic subgroups $P'$ containing $P$.

This definition has an $\ell$-algebraic version for subfields $\ell\subseteq \RR$. If $G$ is an $\ell$-algebraic semisimple group and $P$ an $\ell$-parabolic subgroup in it, then a homomorphism $f:P(\RR)\to \RR$ is called \emph{$\ell$-positive} if it is a linear combination with nonnegative and rational coefficients of $\log \delta_{P'}$ for $\ell$-maximal parabolic subgroups $P'$ containing $P$. 
\end{de}

\begin{ex}[Example \ref{sssec: SLnR} continued]
    For $1\le m\le n-1$ let us denote by $P_{m}$ the parabolic subgroup of $\SL_n(\RR)$ of block upper triangular matrices with block sizes $m$ and $n-m$. That is \[
    P_m=\left\{\begin{pmatrix}
        A & B \\ 0 & C
    \end{pmatrix}:A\in M_{m,m},\: B\in M_{m,n-m},\: C\in M_{n-m,n-m}\right\},\]  
    where $M_{k,l}$ denotes the set of $k\times l$ matrices with entries in $\RR$.  
    Then, the modular function $\delta_{P_{m}}$ satisfies 
    \[\delta_{P_m}\left(\begin{pmatrix}
        A & B \\ 0 & C
    \end{pmatrix}\right) = \frac{\det A^{n-m}}{\det C^{m}} = \det A^{n}.\]
    Hence, if $n=n_1+n_2+\cdots +n_s$ and $P$ is the parabolic subgroup of all upper diagonal block matrices with respect to the $(n_i)_{i=1}^s$ block structure, then the positive homomorphisms consist of all maps $\sigma:P\to \RR$ such that for some $c_1\ge c_2\ge \cdots \ge c_s$, 
    \[\sigma(A) = \sum_{i=1}^s c_i \log |\det A_i|,\]
    where $A_i$ are the blocks of $A$ on the diagonal. 
\end{ex}

The main goal of this section is to prove the following theorem.


\begin{thm}\label{thm: class equiv busemann} 
The following three classes of functions are equivalent:
\begin{enumerate}[label=\emph{(\arabic*)}, ref=(\arabic*)]
    \item \textbf{Busemann functions:} The class of constant shifts of nonnegative multiples of 
    Busemann functions. 
    \item \textbf{Homomorphisms from parabolics:} \label{item: parabolic homomorphism} The class of functions which satisfies that $\pi(p)\mapsto \chi(p)$ for all $p\in P$, where $P$ is a parabolic subgroup of $G$, $\chi:P\to \RR$ is a positive homomorphims.
    \item \textbf{Lengths of highest weight vectors in fundumental representations:}\label{class: sum of funcs of vectors}  The class of functions of the form
    \begin{align*}
    \pi(g)\mapsto\sum_{i=1}^r c_i \log \|\varrho_i(gg_0)v_i\| + C,
    \end{align*}
    where $\varrho_i$ are $\RR$-fundamental representations, $g_0\in G$, and $v_i$ are the appropriate highest weight vectors, which correspond to maximal $\RR$-parabolic groups, and are defined in \S\ref{sec:fundamental representation}, for some choice of an  $\RR$-simple system, the $c_i$ are non-negative, and $C\in\RR$.
\end{enumerate}
\end{thm}

For the proof of Theorem \ref{thm: class equiv busemann} we need a good description of the modular functions as well as of the kernel of the Busemann functions. 


\begin{lem}
\label{lem: modular paravec behaviour} Let $1\le i\le r$. Then, for any $h\in P_{i}$,  $$\varrho_{i}(h) v_{i} = \delta_{P_{i}}(h)v_{i}.$$
\end{lem}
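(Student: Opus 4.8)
The plan is to prove Lemma~\ref{lem: modular paravec behaviour} by identifying the one-dimensional representation of $P_j$ on the line $k v_j$ with the modular character $\delta_{P_j}$, up to a positive integer ratio. This is essentially the classical fact that the action of a parabolic on its highest-weight line is given by (a power of) the modular function; the integer $m_j$ appears because $v_j$ is the highest weight vector of $\varrho_j$, whose highest weight $\chi_j$ is only the $m_j$-th multiple of the primitive lattice element satisfying Eq.~\eqref{eq: fundamental weight}.

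\textbf{Step 1: the line $kv_j$ is $P_j$-stable and gives a character.}
Since $v_j$ is a highest weight vector and $P_j=\stab_{\varrho_j}(v_j)$ is exactly the stabilizer of the \emph{line} $kv_j$ (a standard fact recalled in Definition~\ref{def: fundumental weights}), for each $h\in P_j$ there is a scalar $\psi_j(h)\in k^\times$ with $\varrho_j(h)v_j=\psi_j(h)v_j$, and $\psi_j:P_j\to k^\times$ is an algebraic character. It therefore suffices to show $\psi_j=\delta_{P_j}^{m_j}$ for some $m_j\in\NN$ (here I read $m_j\delta_{P_j}(h)$ in the statement as $\delta_{P_j}(h)^{m_j}$, matching Lemma~\ref{lem: modular paravec behaviour}'s use later; if it is meant additively on the Lie-algebra level the argument is the same).

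\textbf{Step 2: compute both characters on a maximal torus and on the unipotent radical.}
Both $\psi_j$ and $\delta_{P_j}$ are trivial on the unipotent radical $U_j$ of $P_j$ (for $\psi_j$ because $\varrho_j(U_j)v_j=v_j$, as $U_j$ is generated by positive root groups which raise the weight and hence kill the top weight vector; for $\delta_{P_j}$ because $U_j$ is unimodular and normal). Hence both factor through the Levi quotient, and it is enough to compare them on a maximal $k$-split torus $A_k\subset P_j$. On $A_k$, $\psi_j$ is by definition the character $\chi_j$ (the highest weight of $\varrho_j$), while $\delta_{P_j}|_{A_k}$ is the sum of the roots of $A_k$ acting on $\operatorname{Lie}(U_j)$, i.e. the sum of positive roots not in the Levi of $P_j$. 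Using the characterization of $\chi_j$ via Eq.~\eqref{eq: fundamental weight} and the standard formula expressing the modular character of a maximal parabolic as a positive rational multiple of the corresponding fundamental weight, one gets $\chi_j = m_j'\cdot(\text{that multiple})$; collecting constants and invoking the minimality of $m_j$ from Definition~\ref{def: fundumental weights} (the smallest integer for which a $k$-representation with highest weight $\chi_j$ exists) shows the ratio is exactly the integer $m_j$ already named there.

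\textbf{Step 3: conclude over $P_j$, not just its torus.}
A character on $P_j$ is determined by its restriction to $A_k$ together with triviality on $U_j$; since $\psi_j\cdot\delta_{P_j}^{-m_j}$ is a character of $P_j$ that is trivial on $A_k N_j$ and on $U_j$, hence on all of $P_j=KA_kU_j$ intersected appropriately with $P_j$ (more cleanly: $P_j/U_j$ is reductive with derived group in the kernel of every character, so characters of $P_j$ inject into characters of its central torus, on which we have checked equality), it is trivial, giving $\varrho_j(h)v_j=\delta_{P_j}(h)^{m_j}v_j$ for all $h\in P_j$.

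\textbf{Main obstacle.} The genuine content is Step~2: pinning down that the proportionality constant between the highest weight $\chi_j$ and the modular character $\delta_{P_j}$ is precisely the integer $m_j$ from Definition~\ref{def: fundumental weights}, rather than some other positive rational. This requires carefully matching the normalization in Eq.~\eqref{eq: fundamental weight} (where $\frac{2\langle\alpha_i,\chi_j\rangle}{\langle\alpha_i,\alpha_i\rangle}=m_i\delta_{i,j}$) against the Lie-theoretic computation of $\delta_{P_j}$ as the sum of roots in the unipotent radical, and keeping track of the distinction between the abstract fundamental weight and the smallest one realized by an actual $k$-rational representation. Everything else is bookkeeping with characters of parabolics.
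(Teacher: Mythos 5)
Your overall route is the paper's route: the line $kv_j$ is $P_j$-stable, so $h\mapsto\varrho_j(h)v_j$ is given by a character $\psi_j$ of $P_j$; both $\psi_j$ and $\delta_{P_j}$ are trivial on the unipotent radical; and $\delta_{P_j}(h)=|\det(\Ad_{P_j}(h))|$ reduces to the determinant of the adjoint action on the nilradical $\fn_j$, whose torus character (the sum of the roots occurring in $\fn_j$, i.e.\ the weight of the top wedge $u_j$) is proportional to $\chi_j$. That is exactly how the paper argues.

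The genuine problem is in your Step 2, precisely the point you yourself flag as the main obstacle. First, the integrality goes the other way: what this computation gives is $\delta_{P_j}(h)=|\psi_j(h)|^{m_j}$ for a positive integer $m_j$ (equivalently, the sum of the roots in $\fn_j$ equals $m_j\chi_j$), so $\psi_j$ is a \emph{fractional} power $\delta_{P_j}^{1/m_j}$, not an integer power $\delta_{P_j}^{m_j}$ as you assert. Already for $G=\SL_2$, $P$ the Borel and $v$ the highest weight vector of the standard representation, $\psi(\diag(t,t^{-1}))=t$ while $\delta_P(\diag(t,t^{-1}))=t^{2}$, so $\psi=\delta_P^{1/2}$. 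Second, the integer in question is \emph{not} the $m_j$ of Definition \ref{def: fundumental weights}: that $m_j$ is the minimal integer making $m_j$ times the abstract fundamental weight the highest weight of a $k$-representation, whereas the exponent here is dictated by the root data of $\fn_j$. For $G=\SL_n$ and $P_k$ the stabilizer of a $k$-plane, the sum of the roots in the nilradical is $n\chi_k$ while the $m_k$ of the definition equals $1$, so "invoking the minimality of $m_j$" cannot produce the right constant, and in general $\chi_j$ need not be an integer power of $\delta_{P_j}$ at all. (The lemma as printed, with the product $m_j\delta_{P_j}(h)$, is itself garbled—taking $h=e$ would force $m_j=1$—and what the paper's proof establishes, and what is used for the equivalence of classes (2) and (3) in Theorem \ref{thm: class equiv busemann}, is the relation $\log\|\varrho_j(h)v_j\|-\log\|v_j\|=\tfrac{1}{m_j}\log\delta_{P_j}(h)$; for that application only positivity and rationality of the exponent matter, so your misidentification is harmless downstream, but as a proof of the lemma Step 2 must be replaced by the actual computation of the sum of the roots in $\fn_j$, without appeal to the minimality in Definition \ref{def: fundumental weights}.)
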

\begin{proof}
First, let $v_i$ be as in \S\ref{sec:fundamental representation}, a highest weight vector. Thus, the line spanned by $v_i$ is the highest weight eigenspace of $\varrho_i$, and is stabilized by $P_{i}$. That is, for any $h\in P_{i}$, the point $\varrho_{i}(h)v_{i}$ is a constant multiple of $v_{i}$. 

Second, using Lemma \ref{lem: moduler is det} and a standard decomposition of parabolic groups (see \cite[\S V.7]{knapp}), one can deduce that $|\det(\Ad_{P_i}(h))|$ is equal to the determinant of the adjoint action of $P_i$ on $\fn_i:=\bigoplus_{\langle\lambda,\chi_i\rangle>0}\fg_\lambda$. 
As the vector $v_{i}$ (as in \S\ref{sec:fundamental representation}) is defined to be the wedge product of a basis of $\fn_i$, the claim follows.   
\end{proof}

\begin{proof}[Proof of Theorem \ref{thm: class equiv busemann}]
    First note that the equivalence between classes (2) and (3) follows directly from Lemma \ref{lem: modular paravec behaviour}, and so it is enough to prove the equivalence between classes (1) and (3).
    
    Let $\beta_\gamma$ be a Busemann function, i.e., there exist $\ta\in \fa$ and $g\in K$ such that $\gamma$ is defined by $t\mapsto \pi(\exp(t\ta)g)$. 
    Since both classes (1) and (3) are invariant under the $G$ action, we may apply $\Ad(g)^{-1}$ to replace the geodesic with 
    $t\mapsto \pi(\exp(t\ta))$, and assume that $g = I$.
    Moreover, by Lemma \ref{lem:Weyl so dominant} and Remark \ref{rem:Weyl in K}, for some element $k\in K$ we may apply $\Ad(k)^{-1}$ to assume that $\ta\in-\overline{\fa^+}$.
    Hence, one can represent $-\ta = \sum_{i=1}^r c_i\ta_{\chi_i}$, with $\ta_{\chi_i}$ as in Definition \ref{defn: character for ta}.
    
    Let $C:=-\sum_{i=1}^r c_i\log \|v_i\|$, so that the desired equality 
    \begin{align}\label{eq: busman is parabolic}
        \beta_\gamma(\pi(g)) = \sum_{i=1}^r c_i \log \|\varrho_i(g)v_i\| + C,
    \end{align}
    holds for $g = I$.
    Since $\pi(A)$ is a flat, Example \ref{ex: bus on euclid} implies that \eqref{eq: busman is parabolic} holds for all $g\in A$.


    Let $P = P_{-\ta} \stackrel{\ref{claim: parabolic expansion}}{=} \bigcap_{c_i>0} P_i$, 
    and $U = U_{-\ta}$. 
    Then, for every $x\in M$ and $u\in U$ we have     \[\beta_\gamma(xu) = \lim_{t\rightarrow\infty}(d_M(xu,\gamma(t))-t) = \lim_{t\rightarrow\infty}(d_M(x,\gamma(t)u^{-1})-t).\]
    Since 
    \[\lim_{t\to \infty} d_M(\gamma(t)u^{-1}, \gamma(t)) = d_M(\pi(\exp(t\ta)u^{-1}\exp(-t\ta)), \pi(I)) \xrightarrow{t\to \infty  }0,\]
    we deduce that $\beta_\gamma$ is $U$ invariant. 
    Similarly, by Claim \ref{claim: U fixes highest weight}, the right $U$-action preserves the right-hand side of Eq. \eqref{eq: busman is parabolic}. 

    Let $k\in \stab_K\gamma$.
    Since $k$ preserves $\gamma$, we deduce that $k$ preserves $\beta_\gamma$. Since $\stab_K\gamma \subseteq P$ we deduce that 
    $k\in P$ and hence the right $k$ action preserves the right-hand side of \eqref{eq: busman is parabolic}.

    Since the norms on $V_i$ are $K$ invariant, Eq. \eqref{eq: busman is parabolic} depends only on $\pi(g)\in M$, and not on $g$. 


    Let $g\in G$ and represent $g = kup$, where $k\in K$, $u\in U_{-\ta}$ and $p\in T_{-\ta}$. 
    Since $p\in T_{-\ta}$, we deduce that $p = \exp(\ta')$ and $\ta'\in \fp$ commutes with $\ta$. Let $\fa' \subseteq \fp$ be a maximal abelian subalgebra containing $\ta, \ta'$. 
    By \ref{cor: transitive on Cartan}, there is an element $k\in \stab_{K}(\ta)$ such that $Ad_k(\fa) = \fa'$. 
    Since \eqref{eq: busman is parabolic} holds on $A = \exp(\ta)$, and is preserved by $\stab_{K}(\ta)$, we deduce that \eqref{eq: busman is parabolic} holds on $p$. 
    Since \eqref{eq: busman is parabolic} is preserved by multiplication from the left by $K$, and from the right by $p^{-1}up \in U$, we deduce that it holds on $g = kp p^{-1}up$. 
    The other direction, if a function is of class (3) then it is also of class (1) follows from the same computation. 
\end{proof}

\begin{remark}\label{rmk: busmann has parabolic}
  For a Busemann function $\beta_\gamma$ of a geodesic ray $\gamma:t\mapsto \pi(\exp(t\ta))$, for some $\ta\in \fp$, the parabolic subgroup in Class \ref{item: parabolic homomorphism} is $P_{-\ta}$. 
\end{remark}

\begin{thm}\label{thm: f_v descends to Busemann}
Let $\varrho:G\to \SL(V)$ be an $\RR$-representation of $G$, $v\in V\setminus \{0\}$ be a parabolic equivariant vector, and $P$ be the parabolic which stabilizes $\RR v$. Then, the homomorphism $f:P\to \RR$, $p\mapsto \log|\varrho(p)v/v|$ is positive. Here $\varrho(p)v/v$ is the unique scalar $\alpha \in \RR$ such that $\alpha v = \varrho(p)v$.
  Consequently, for every $K$ invariant norm $\|\cdot\|$ on $V$ the map $g\mapsto \log \|\varrho(g)v\|$ descends to a Busemann function on $M=K\backslash G$.
\end{thm}
\begin{proof}
Fix a maximal $\RR$-split torus $A \subset P$.
By Lemma \ref{lem:stabilized by parab}, since $P$ stabilizes $\RR v$,  $v$ is an $\RR$-highest weight vector for $\varrho$ with respect to some choice of a simple system. 
Without loss of generality, we may assume that, in the notation of \S\ref{sec:fundamental representation}, the minimal $\RR$-parabolic subgroup $P_0=\bigcap_{i=1}^r P_i$ is contained in $P$ (as otherwise it can be replaces with its conjugated to the one chosen here). 
Denote this $\RR$-highest weight by $\chi$. 
 Without loss of generality, we may assume $A$ is the one chosen in \S\ref{sec:fundamental representation} (as it is conjugated to the one chosen here).

As defined in \S\ref{sec:representations}, $\chi$ is a character on $\fa$. Equivalently, $\chi$ can be viewed as a character on $A$. Recall that for any $a\in A$ we have $\varrho(a) v = e^{\chi(a)}v$. Since $\varrho(p_1p_2) v=\varrho(p_1)\varrho(p_2) v$ and $\varrho(P)v=\RR v$, we can expend $\chi$ to a character on $P$ which satisfies\[
\varrho(p) v = e^{\chi(p)}v. \] 
By \eqref{eq: chi_tau decomposition} and Lemma \ref{lem: highest weight pos Weyl}, we have $\chi=\sum_{i=1}^r c_i\chi_i$ for some choice of non-negative $c_1,\dots,c_d$. That is,\[
\varrho(p) v = e^{\chi(p)}v= e^{\sum_{i=1}^r c_i\chi_i(p)}v= \prod_{i=1}^r \delta_{P_i}^{c_i}(p)v, \]
where the last equality follows from Lemma \ref{lem: modular paravec behaviour}. 
\end{proof}

\begin{ex}[Example \ref{sssec: SLnR} continued]
    Assuming $G = \SL_n(\RR)$, for any $1\le i\le n-1$ the fundamental weight $\chi_i$ is a scalar multiple of the highest weight of the exterior product representation (on $\bigwedge_i\RR^n$). 
    In particular, one may use theis representation instead of $\varrho_i$, and obtain a simplification of Class \ref{class: sum of funcs of vectors} in Theorem \ref{thm: class equiv busemann}:
    Let $u_1,...,u_n$ be a basis of $\RR^n$, $c_1,..., c_{n-1} \ge 0$ and $C>0$. Consider the class of functions of the form 
    \begin{align*}
        \pi(g)\mapsto \sum_{i=1}^{n-1} c_i \log \| (gu_1)\wedge (gu_2)\wedge\cdots \wedge (gu_i)\| + C.
    \end{align*}
\end{ex}

\section{The fastest shrinking geodesic} \label{sec:The fastest shrinking geodesic}
To construct the Busemann function in Theorem \ref{thm: main}\ref{part: main ge busemann}), we use the `fastest shrinking geodesic' of our function of interest. In this section, the fastest shrinking geodesic is constructed for a class of functions in the more general setting of a $\operatorname{CAT}(0)$-space, and the properties of such geodesic are studied in a special case, which is relevant to us, and is used in the next chapter. 

Recall the definition (and relevant notations) of a Hadamard space from \S\ref{sec: cat0 spaces}.
Fix a locally compact, Hadamard space $(M,d)$ and a point $o\in M$. 

For $s>0$, $x\in M$ denote by $B(x,s)$ the closed ball of radius $s$ around the point $x$.

\begin{lem}\label{lem: x_s exists and unique}
Let $f:M\rightarrow\RR$ be a convex function which is unbounded from below. Then, 
for any $x\in M$, $s>0$, the function $f$ attains a minimum on the closed ball $B(x,s)$ at a unique point on its boundary $\{y\in M:d(y,x) = s\}$. 
\end{lem}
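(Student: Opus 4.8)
The plan is to establish existence, then uniqueness, then the boundary assertion. For \emph{existence}, I would note that $M$ is locally compact and Hadamard, hence proper (closed balls are compact) — this is a standard consequence of completeness plus local compactness in a geodesic space, and the closed ball $B(x,s)$ is therefore compact. A convex function on a Hadamard space is continuous on the interior of any ball where it is finite (it is real-valued everywhere by hypothesis, so in particular locally bounded above along geodesics, which forces local boundedness and hence local Lipschitz continuity in the $\operatorname{CAT}(0)$ setting); continuity of $f$ on the compact set $B(x,s)$ then gives a minimizer. So existence is essentially free.

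For \emph{uniqueness}, the key point is strict convexity of $f$ along geodesics emanating from $x$, which in turn comes from the fact that $f$ is unbounded below together with convexity. Suppose $y_1 \neq y_2$ both minimize $f$ on $B(x,s)$, with common minimum value $v$. Take the midpoint $m$ of $y_1$ and $y_2$; by the Hadamard inequality \eqref{eq: Hadamard inequality} applied with $z = x$ we get $d(x,m)^2 \le \tfrac{d(x,y_1)^2 + d(x,y_2)^2}{2} - \tfrac{d(y_1,y_2)^2}{4} < s^2$, so $m$ lies strictly inside $B(x,s)$. By convexity of $f$ along the geodesic $[y_1,y_2]$ we have $f(m) \le v$, so $m$ is also a minimizer, and it lies in the open ball. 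Now I would derive a contradiction from the existence of an interior minimizer: pick any point $w$ with $f(w) < v$ (possible since $f$ is unbounded below), and consider the geodesic from $m$ to $w$; for small $t$ it stays inside $B(x,s)$, and convexity gives $f(\gamma(t)) \le (1-t)f(m) + t f(w) < v$ for $t \in (0,1]$, contradicting that $v$ is the minimum on $B(x,s)$. This simultaneously proves that no interior point can be a minimizer, hence the minimizer lies on the boundary sphere $\{y : d(y,x) = s\}$, and — re-running the midpoint argument — that it is unique (two distinct boundary minimizers would produce an interior minimizer, which we have just excluded).

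I expect the \textbf{main obstacle} to be the regularity/topological bookkeeping rather than the convexity argument: namely justifying that a finite convex function on a locally compact Hadamard space is continuous (so that it attains its infimum on a compact ball) and that local compactness upgrades to properness. Both are standard — continuity of convex functions follows from the $\operatorname{CAT}(0)$ comparison as in \cite[Ch.~II]{Metric Spaces}, and properness of locally compact complete geodesic spaces is Hopf–Rinow — but they are the steps that need a citation rather than a one-line argument. Once continuity and compactness are in hand, the convexity plus unbounded-below hypothesis does all the real work, via the single observation that an interior minimizer would let us ``push past'' it toward a point of smaller value while staying in the ball.
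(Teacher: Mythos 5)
Your proof is correct and takes essentially the same route as the paper's: the midpoint argument via the Hadamard/$\operatorname{CAT}(0)$ inequality shows two minimizers would yield a minimizer strictly inside the ball, and an interior minimizer is excluded by sliding along a geodesic toward a point of smaller value (the paper phrases this as ``a local minimum of a convex function is global,'' contradicting unboundedness below). The only difference is that you also justify existence of the minimizer (properness via Hopf--Rinow plus continuity of convex functions), a step the paper's proof takes for granted.
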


\begin{proof} 
First, assume by contradiction that $f$ attains a minimum on $B(x,s)$ at a point $y_1$ with $d(e, y_1) < s$. In particular, there is a neighbourhood of $y_1$ which is contained in $B(x,s)$. Hence, $y_1$ is a local minimum of $f$. Since $f$ is convex, this implies that $y_1$ is a global minimum. A contradiction to the assumption that $f$ is unbounded from below. 

Second, assume by contradiction that $f$ attains its minimum on $B(x,s)$ at two points $y_1,y_2$. Since $M$ is a $CAT(0)$ space, it follows that the midpoint $y_3$ of $y_1$ and $y_2$ satisfies $d(y_3, x) < s$. 
Since $f$ is convex it follows that $f(y_3) \le f(y_2) = f(y_1)$. This implies that $y_3$ is another points in which $f$ attains a minimum, with $d(y_3, x) < s$. A contradiction to the previous discussion.
\end{proof}

Fix a convex, unbounded from below, function $f:M\to \RR$.

\begin{de}[Minimizing points of $f$ in balls]\label{de: minimizing points}
For any $s>0$ denote by $x_s$ the point which minimizes $f$ on $B(o,s)$. By Lemma \ref{lem: x_s exists and unique} the point $x_s$ is uniquely defined and satisfy $d(o, x_s) = s$. In particular, this definition implies $x_0=o$.
Denote by $\gamma_s:[0,s]\to M$ the geodesic connecting $o$ and $x_s$.
\end{de}

\begin{lem}\label{lem:a exists}
    The sequence $\frac{f(x_s)}{s}$ has a limit. 
\end{lem}

\begin{proof}
Let us note that by the definition of $x_s$ and the convexity of $f$, for any $s<t$ we have 
\begin{align*}
    \frac{f(x_{s})}{s}&\le \frac{f(\gamma_{t}(s))}{s}\le \frac{t-s}{t}\frac{f(\gamma_{t}(0))}{s}+ \frac{s}{t}\frac{f(\gamma_{t}(t))}{s}\\
    &= \left(\frac{1}{s}-\frac{1}{t}\right)f(o)+ \frac{f(x_t)}{t}. 
\end{align*}
That is, the shift $g(x)=f(x)-f(o)$ is a convex function, so that $\frac{g(x_s)}{s}$ is negative and non-decreasing. Hence, the sequence $\frac{g(x_s)}{s}$ has a limit, which implies that the sequence $\frac{f(x_s)}{s}$ also has a limit. 
\end{proof}

\begin{de}\label{de: decrease linearly}
We say that $f$ is \emph{decreasing linearly} if \[\lim_{s\to\infty} \frac{f(x_s)}{s}<0.\]
\end{de}

\begin{prop}\label{prop: one gamma}
If $f$ is decreasing linearly, then the sequence of geodesics $(\gamma_s)_{s>0}$ constructed in Definition \ref{de: minimizing points} converges to a geodesic $\gamma_\infty:[0,\infty)\to M$ pointwise. 
Moreover, setting $a:= - \lim_{s\to\infty} \frac{f(x_s)}{s}>0$, the geodesic $\gamma_\infty$ is the unique geodesic which satisfies 
\[ \gamma_\infty(0) = o\quad \text{and}\quad f(\gamma_\infty(s)) = -as\cdot(1+o(1)).\] 
\end{prop}
\begin{proof}
Let \begin{align}\label{eq: def a}
    a := -\lim_{s\to \infty} \frac{f(x_s)}{s}.
\end{align}
Since $f$ is decreasing linearly, we have $a>0$. 
Since the space of geodesic rays from $o$ is compact, the sequence $\{\gamma_s\}$ has a partial limit. Let $\gamma_\infty:[0,\infty)\to M$ be such a partial limit. 

Since $f$ is convex, for any $s>0$ we have 
\begin{align*}
f(\gamma_\infty(s)) &= \lim_{i\to \infty} f(\gamma_{s_i}(s))\\
&\le 
\lim_{i\to\infty}\left(\frac{s_i-s}{s_i} f(x_0) + \frac{s}{s_i} f(x_{s_i})\right)\\&= 
f(x_0) - s\cdot a,
\end{align*}
where the limit in the second line is taken over large enough $i$ so that $s_i>s$. 
Since $\gamma_\infty$ starts at $o$, we have that for any $s>0$,  $d(\gamma_\infty(s),o)=s$. 
Hence, by Definition \ref{de: minimizing points} for any $s>0$ we have 
\begin{equation}\label{eq: bound f x_s}
    f(\gamma_\infty(s))\ge f(x_s). 
\end{equation}
It now follows from \eqref{eq: def a} that for all $s>0$ 
$$f(\gamma_{\infty}(s)) = -as\cdot(1+o(1)).$$
Thus, $f\circ\gamma_\infty$ is of the claimed form. 

Let $\gamma_\infty':[0, \infty)\to M$ be another geodesic ray with $\gamma_\infty'(0) = o$ and 
$f(\gamma_\infty'(s)) = -as\cdot(1+o(1))$ for all $s\ge 0$.
Denote by $r$ the distance at time one between the two geodesics, i.e., $r := d_M(\gamma_\infty(1), \gamma_\infty'(1))$.
By Lemma \ref{lem: cat0 implication}(2), $d_M(\gamma_\infty(t), \gamma_\infty'(t)) \ge rt$ for every $t\ge 1$. 
Denote by $m_t$ the middle point between $\gamma_\infty(t)$ and $\gamma_\infty'(t)$. 
Then, by Lemma \ref{lem: cat0 implication}(3), we have
\begin{equation}\label{eq: upper bound gamma_infty}
	d_M(x_0, m_t) \le t \sqrt{1-r^2/4}. 
\end{equation}
On the other hand, since $f$ is convex it follows that $f(m_t) \le at(1 + o(1))$, and hence
\begin{equation*}
	f(x_{t\sqrt{1-r^2/4}}) \le -at(1 + o(1)).
\end{equation*}
Unless $r=0$, this together with \eqref{eq: upper bound gamma_infty} contradicts the definition of $a$. Hence $r=0$ and $\gamma_\infty = \gamma_\infty'$.
\end{proof}

In view of Proposition \ref{prop: one gamma} we define:
\begin{de}[Fastest shrinking geodesic]\label{de: fastest shrinking}
Let $(M, o , f)$ be a tuple of a locally compact Hadamard space $M$, a point $o$ in $M$, and a convex, linearly decreasing function $f:M\to \RR$. Define the \emph{fastest shrinking geodesic of $(M, o , f)$} to be $\gamma = \gamma_{M,o,f}:[0,\infty)\to M$, the limit of $\gamma_s$ defined as in Definition \ref{de: minimizing points}.
Then, for $a_{M,o,f}:= -\lim_{s\to \infty}\frac{1}{s}\min_{x\in B(s,o)}f(x)$ we have $f(\gamma(s)) = -a_{M,o,f}s\cdot (1+o(1))$ for all $s>0$. 
The constant $a_{M,o,f}$ is called the \emph{shrinking rate of f}.
\end{de}

To analyze symmetric spaces (see \S\ref{sec:symmetric spaces}) it is useful to consider the maximal flats in them. 
Hence, we show that restriction to `nice' subspaces does not change the fastest shrinking geodesic or the shrinking rate.

\begin{lem}[Restriction]
\label{lem: fsg restriction}
Let $(M, o , f)$ be a triplet as in Definition \ref{de: fastest shrinking}. 
If $Y\subseteq M$ is a sub Hadamard space which contains the image of $\gamma_{M, o, f}$ then $\gamma_{M, o, f} = \gamma_{Y, o, f|_{Y}}$ and $a_{M, o, f} = a_{Y, o, f|_{Y}}$. 
\end{lem}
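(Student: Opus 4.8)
The plan is to show both equalities at once by arguing that the minimizing points $x_s$ in $M$ already lie inside $Y$, so that the entire construction of Definition~\ref{de: minimizing points} run in $Y$ produces the same objects. First I would observe that since $Y$ is a sub-Hadamard space of $M$ (in particular a convex, geodesically closed subset containing $o$), the intrinsic metric on $Y$ agrees with the restriction of $d_M$, and a geodesic of $M$ with both endpoints in $Y$ stays in $Y$; consequently geodesic balls satisfy $B_Y(o,s) = B(o,s)\cap Y$, and $f|_Y$ is again convex and unbounded from below along the ray $\gamma_{M,o,f}\subseteq Y$, hence linearly decreasing with $a_{Y,o,f|_Y}\le a_{M,o,f}$.

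Next I would pin down the point $x_s$ for the ambient space. For each $s>0$ the point $\gamma_{M,o,f}(s)$ lies in $B(o,s)$ and, by Proposition~\ref{prop: one gamma}, satisfies $f(\gamma_{M,o,f}(s)) = -as(1+o(1))$ with $a = a_{M,o,f}$. I would compare this with $f(x_s) = \min_{B(o,s)} f$; since $\gamma_{M,o,f}(s)\in B(o,s)$ we get $f(x_s)\le f(\gamma_{M,o,f}(s))$, while the definition of $a$ as $-\lim f(x_s)/s$ combined with the estimate \eqref{eq: bound f x_s} (namely $f(\gamma_{M,o,f}(s))\ge f(x_s)$, which holds because $\gamma_{M,o,f}(s)$ is on the sphere of radius $s$) forces $f(\gamma_{M,o,f}(s)) = f(x_s)$ up to a $o(s)$ term — but more is true: by the uniqueness part of Lemma~\ref{lem: x_s exists and unique}, $x_s$ is the \emph{unique} minimizer of $f$ on $B(o,s)$, so I need to actually identify it, not just match values. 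The cleanest route is: $\gamma_{M,o,f}$ is the limit of the geodesics $\gamma_s$ joining $o$ to $x_s$, and I want to show $x_s\in Y$ directly. For this I would use that $Y$ contains the whole ray $\gamma_{M,o,f}$; the restricted triple $(Y,o,f|_Y)$ has its own fastest shrinking geodesic $\gamma_{Y,o,f|_Y}$ with rate $a' := a_{Y,o,f|_Y}$, and since minimizing over the smaller ball $B_Y(o,s)\subseteq B(o,s)$ can only increase the minimum, $a'\le a$. Conversely, $\gamma_{M,o,f}$ is itself a geodesic ray in $Y$ through $o$ with $f|_Y(\gamma_{M,o,f}(s)) = -as(1+o(1))$, so by the uniqueness clause of Proposition~\ref{prop: one gamma} applied inside $Y$ — which asserts the fastest shrinking geodesic is the unique ray through $o$ along which $f$ decreases at the optimal linear rate — we must have $a'\ge a$ and $\gamma_{Y,o,f|_Y} = \gamma_{M,o,f}$.

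So the argument reduces to the following two-line chain: (i) $a_{Y,o,f|_Y}\le a_{M,o,f}$ because the infimum over a subset is larger; (ii) the ambient fastest shrinking geodesic $\gamma_{M,o,f}$ lies in $Y$ by hypothesis and realizes linear decrease rate $a_{M,o,f}$ there, so $a_{Y,o,f|_Y}\ge a_{M,o,f}$ and, by the uniqueness in Proposition~\ref{prop: one gamma}, it \emph{is} $\gamma_{Y,o,f|_Y}$. Combining, $a_{M,o,f} = a_{Y,o,f|_Y}$ and $\gamma_{M,o,f} = \gamma_{Y,o,f|_Y}$. The one point requiring care — and what I expect to be the main obstacle — is verifying that $f|_Y$ is still \emph{linearly decreasing} in the sense of Definition~\ref{de: decrease linearly} and that Proposition~\ref{prop: one gamma} genuinely applies to $(Y,o,f|_Y)$: this needs $Y$ to be a locally compact Hadamard space in its own right (so that the space of geodesic rays from $o$ in $Y$ is compact) and $f|_Y$ unbounded below, both of which follow from the stated hypotheses once one notes $f|_Y$ is unbounded below along $\gamma_{M,o,f}\subseteq Y$ because $f(\gamma_{M,o,f}(s))\to-\infty$.
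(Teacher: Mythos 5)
Your proposal is correct and follows essentially the same route as the paper: both prove the statement by sandwiching the shrinking rate (restriction to the subspace can only slow the decay, while the ambient fastest shrinking geodesic lies in $Y$ and realizes the full rate $a_{M,o,f}$ there, forcing $a_{Y,o,f|_Y}=a_{M,o,f}$), and then invoke the uniqueness clause of Proposition \ref{prop: one gamma} to identify the two geodesics. The detour about locating the minimizers $x_s$ inside $Y$ is unnecessary, as you yourself note before discarding it, and the remaining two-line chain is exactly the paper's argument.
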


\begin{proof}
We denote $\gamma:=\gamma_{M, o, f}$. 
By definition, $$a_{M, o, f} \le a_{Y, o, f|_{Y}} \le a_{\Im\gamma, o, f\circ \gamma}.$$
Proposition \ref{prop: one gamma} implies that $a_{\Im\gamma, o, f\circ \gamma} = a_{M, o, f}$. Hence, we deduce $a_{M, o, f} = a_{Y, o, f|_{Y}}$. 
Then, the uniqueness of the limit of geodesics in Proposition \ref{prop: one gamma} implies that $\gamma_{Y, o, f|_{Y}}=\gamma$.
\end{proof}

The next result follows directly from Proposition \ref{prop: one gamma}. 
\begin{lem}[Bounded shifts]\label{lem: fsg O(1)}
Let $(M, o, f)$ be a triplet as in Definition \ref{de: fastest shrinking}, and $\tilde{f}:M\to \RR$ be a convex function such that $|f - \tilde{f}|$ is bounded.  Then $(M, o, f)$ satisfies the assumptions of Definition \ref{de: fastest shrinking} and  \[
\gamma_{M, o, f} = \gamma_{M, o, \tilde{f}},\quad a_{M, o, f} = a_{M, o, \tilde{f}}.\]
\end{lem}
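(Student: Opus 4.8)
The plan is to reduce everything to the characterizations already established: Lemma \ref{lem: x_s exists and unique} for existence of minimizing points, and Proposition \ref{prop: one gamma} for the fact that the fastest shrinking geodesic is \emph{uniquely} pinned down by the single asymptotic condition $f(\gamma(s)) = -as(1+o(1))$. Fix $B\ge 0$ with $|f-\tilde f|\le B$ on all of $M$. First I would check that $\tilde f$ is eligible, i.e. satisfies the hypotheses of Definition \ref{de: fastest shrinking}: it is convex by assumption, and it is unbounded from below since $f$ is and the two differ by at most $B$; linear decrease will follow from the computation of the rate below.

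Next I would handle the shrinking rate. For every $s>0$ one has $\bigl|\min_{x\in B(o,s)}f(x)-\min_{x\in B(o,s)}\tilde f(x)\bigr|\le B$, because a uniform bound on $|f-\tilde f|$ passes to the infima over any fixed set. Dividing by $s$ and letting $s\to\infty$, the $B/s$ error vanishes, so
\[
a_{M,o,\tilde f}=-\lim_{s\to\infty}\frac1s\min_{x\in B(o,s)}\tilde f(x)=-\lim_{s\to\infty}\frac1s\min_{x\in B(o,s)}f(x)=a_{M,o,f}>0.
\]
In particular $\tilde f$ is decreasing linearly, completing the eligibility check, and the two rates coincide.

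Finally, for the geodesics, write $\gamma:=\gamma_{M,o,f}$ and $a:=a_{M,o,f}=a_{M,o,\tilde f}$. By Definition \ref{de: fastest shrinking} (via Proposition \ref{prop: one gamma}), $f(\gamma(s))=-as(1+o(1))$, so $\tilde f(\gamma(s))=f(\gamma(s))+O(1)=-as+o(s)+O(1)=-as(1+o(1))$, the additive $O(1)$ being absorbed because $as\to\infty$. Thus $\gamma$ is a geodesic ray from $o$ satisfying the asymptotic condition that, by the uniqueness clause of Proposition \ref{prop: one gamma} applied to $\tilde f$, characterizes $\gamma_{M,o,\tilde f}$. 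Hence $\gamma_{M,o,\tilde f}=\gamma=\gamma_{M,o,f}$.

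The argument is essentially a bookkeeping exercise; the only point requiring a moment's care is the last one, namely that an additive bounded perturbation does not disturb the normalization $-as(1+o(1))$ precisely because the leading term is linear and the rate $a$ is strictly positive, so the perturbation is $o(s)$. No genuine obstacle is expected.
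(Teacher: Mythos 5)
Your argument is correct and is exactly the straightforward argument the paper intends: the paper offers no written proof (it states the lemma "follows directly from Definition \ref{de: fastest shrinking}"), and your filling-in — the minima over balls differ by at most the bound, so the rates coincide, and then the uniqueness clause of Proposition \ref{prop: one gamma} pins down the geodesic since an $O(1)$ perturbation is absorbed into $-as(1+o(1))$ — is precisely that direct verification.
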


\subsection{Piecewise linear convex functions on Euclidean spaces}\label{sec:piecewise linear convex functions}

Our goal in defining the fastest shrinking geodesic is to use them to study, and more specifically bound the `shrink-rate functions', which are defined in the next section. 
The restriction of these functions to maximal flats (see Definition \ref{de: flats}) are piecewise linear, up to bounded error. Moreover, the metric on the maximal flats is Euclidean.  
Therefore, we dedicated this subsection to study the fastest shrinking geodesics in this setting.

In this subsection, we assume that $M$ is the Euclidean space $\RR^n$ with the standard Euclidean metric. 

\begin{claim}\label{claim: fsg of max of linears}
Let $V_0\subseteq \RR^n$ and $(a_v)_{v\in V_0}\subseteq \RR^n$ be a collection of real numbers. Define the function \[f(x):= \max_{v\in V_0}(\bra x, v\ket + a_v).\] 
Then, $f$ is unbounded from below if and only if 
\begin{equation}\label{eq: 0 not in convex}
    0\notin \conv (V_0).
\end{equation}
Moreover, assuming \eqref{eq: 0 not in convex}, and denoting by $u$ the closest point to $0$ in $\conv(V_0)$, we have: 
\begin{enumerate}
    \item $f$ decreases linearly,
    \item The fastest  shrinking geodesic for $f$ is \[\gamma_{\RR^n, 0, f}(t) = -tu / \|u\|,\] 
    \item $a_{M, o, f}= -\|u\|$, and
    \item There exists $C\in \RR$, such that 
    \begin{align}\label{eq: bound piecewise linear}
    f(x) \ge \bra x, u\ket + C.
\end{align}
\end{enumerate}
\end{claim}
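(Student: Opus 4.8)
The plan is to analyze the piecewise-linear convex function $f(x) = \max_i (\langle x, v_i\rangle + a_i)$ by separating the "linear-growth" behavior (governed by the $v_i$ alone) from the bounded fluctuations (governed by the $a_i$), and then to reduce everything to the elementary geometry of the convex hull $K := \conv\{v_i : i = 1,\dots,n\}$ and its nearest point $u$ to the origin.

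\emph{Unboundedness criterion.} First I would prove the equivalence: $f$ is unbounded from below iff $0 \notin K$. If $0 \in K$, write $0 = \sum \lambda_i v_i$ with $\lambda_i \geq 0$, $\sum \lambda_i = 1$; then for any $x$, $\max_i(\langle x, v_i\rangle + a_i) \geq \sum_i \lambda_i(\langle x, v_i\rangle + a_i) = \sum_i \lambda_i a_i$, a constant, so $f$ is bounded below. Conversely, if $0 \notin K$, the separating hyperplane theorem gives a direction $w$ with $\langle w, v_i\rangle \geq \epsilon > 0$ for all $i$; then $f(-tw) \leq -t\epsilon + \max_i a_i \to -\infty$.

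\emph{The four conclusions under $0 \notin K$.} Let $u$ be the (unique) nearest point to $0$ in $K$; by the variational characterization of nearest-point projection onto a convex set, $\langle v - u, 0 - u\rangle \leq 0$ for all $v \in K$, i.e. $\langle v, u\rangle \geq \|u\|^2$ for all $v \in K$, and in particular $\langle v_i, u\rangle \geq \|u\|^2 > 0$ for every $i$. This immediately yields (4): $f(x) = \max_i(\langle x, v_i\rangle + a_i) \geq \sum_i \lambda_i (\langle x, v_i\rangle + a_i)$ for the convex combination $u = \sum \lambda_i v_i$, which equals $\langle x, u\rangle + \sum_i \lambda_i a_i$, so $C := \sum_i \lambda_i a_i$ works. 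For the geodesic direction, set $e := -u/\|u\|$ and consider $\gamma(t) = te$. Then $f(te) = \max_i(-t\langle v_i, u\rangle/\|u\| + a_i) \leq -t\|u\| + \max_i a_i$, giving the upper bound $f(\gamma(t)) \leq -\|u\| t + O(1)$; combined with (4), which gives $f(te) \geq \langle te, u\rangle + C = -\|u\| t + C$, we get $f(\gamma(t)) = -\|u\| t + O(1)$. This shows $f$ decreases linearly, establishing (1), and pins the shrinking rate of this particular geodesic at $\|u\|$.

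\emph{Identifying the fastest shrinking geodesic.} It remains to show this $\gamma$ is \emph{the} fastest shrinking geodesic and that $a_{M,o,f} = -\|u\|$, i.e. that no ray does better than rate $\|u\|$. For any unit vector $e'$, the asymptotic slope $\lim_{t\to\infty} f(te')/t = \max_i \langle e', v_i\rangle$ (the $a_i$ drop out in the limit). Minimizing $\max_i \langle e', v_i\rangle = \max_{v \in K}\langle e', v\rangle$ over unit $e'$ is a minimax problem whose value is exactly $-\|u\|$, attained uniquely at $e' = -u/\|u\|$: indeed $\max_{v\in K}\langle e', v\rangle \geq \langle e', u\rangle \geq -\|u\|$ with equality forcing $e' = -u/\|u\|$ (Cauchy–Schwarz equality plus the supporting-hyperplane inequality $\langle -u/\|u\|, v\rangle \leq -\|u\|$ for $v \in K$... wait, more carefully: one shows $\max_{v\in K}\langle e',v\rangle = -\|u\|$ implies in particular $\langle e', u\rangle \leq -\|u\|$, hence $e' = -u/\|u\|$). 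Together with Lemma~\ref{lem: fsg O(1)} (the fastest shrinking geodesic is unaffected by bounded perturbations, so we may ignore the $a_i$), Proposition~\ref{prop: one gamma}'s uniqueness statement, and the computation above, this gives (2) and (3).

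\emph{Main obstacle.} The routine parts are the separation arguments and the projection inequality. The one point requiring genuine care is the last step — verifying rigorously that the linear-decrease rate of an arbitrary geodesic ray is $\lim_t f(te')/t$ and that this minimax equals $\|u\|$ uniquely — because one must make sure the bounded terms $a_i$ genuinely wash out asymptotically (they do, but the minimizing index set can change with $t$, so a clean argument passes through the convex hull: $\min_{\|e'\|=1}\max_{v \in K}\langle e', v\rangle$) and that the minimizer is unique, which is where strict convexity of the Euclidean norm / the uniqueness of nearest-point projection is essential. Once that minimax lemma is in hand, invoking Lemma~\ref{lem: fsg O(1)} and Proposition~\ref{prop: one gamma} closes everything with no further work.
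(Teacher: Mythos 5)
Your proposal is correct and follows essentially the same route as the paper: the convex-combination bound for the case $0\in\conv\{v_i\}$, the nearest-point inequality $\langle v_i,u\rangle\ge\|u\|^2$, the explicit linear decrease along $t\mapsto -tu/\|u\|$, the lower bound \eqref{eq: bound piecewise linear} via the convex combination representing $u$, and Proposition \ref{prop: one gamma} to identify the fastest shrinking geodesic. The only (harmless) variation is that you pin down the optimal direction by a minimax argument over unit vectors, whereas the paper simply observes that \eqref{eq: bound piecewise linear} forces the shrinking rate to be $\|u\|$ and then concludes (2) and (3) from the uniqueness statement of Proposition \ref{prop: one gamma}.
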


\begin{proof}
If $0\in \conv(V_0)$, then for some non-negative $(c_v)_{v\in V_0}$ with $\sum_{v\in V_0}c_v = 1$ we have $\sum_{v\in V_0}^n c_v v = 0$. Then, \[
f(x)= \max_{v\in V_0}(\bra x, v\ket + a_v) \ge \sum_{v\in V_0}^n c_v(\bra x, v\ket + a_v) = \sum_{v\in V_0} c_va_v.\]
is a bound from below on $f$.

Now, assume $0\notin \conv(V_0)$. We show that $f$ decreases linearly, which also implies that $f$ is unbounded from below. By Definition \ref{de: minimizing points}, it is enough to find a direction in which $f$ decreasing linearly, i.e. a vector $v$, such that $f(sv)\le cs$ for some $c<0$ and all large enough $s$. 

Let $u$ be the closest point to $0$ in the convex hull $\conv(V_0)$. 
Since $u$ is of minimal length, for any $v\in V_0$ and $p\in(0,1)$ we have
\begin{align*}
    \norm{u}^2&\le\norm{pv+(1-p)u}^2\\
    &=p^2\norm{v}^2+2p(1-p)\langle u,v \rangle+(1-p)^2\norm{u}^2,
\end{align*}
which implies \[
\langle u,v \rangle\ge \norm{u}^2-\frac{p}{2(1-p)}(\norm{v}^2-\norm{u}^2). \]
Thus, we deduce $\langle u,v \rangle\ge \norm{u}^2$, for all $v\in V_0$. 
We may now compute \[f(-tu) = \max_{1\le i\le n}(-t\bra u, v\ket + a_v) \le -t\|u\|^2 + \max_{v\in V_0}a_v,\] 
which decreases linearly, proving (1).

According to Proposition \ref{prop: one gamma} there is a minimal shrinking geodesic and $a_{\RR^n, 0, f} \le -\|u\|$.
On the one hand,
for all $x\in \RR^n$ we have 
\[f(x)= \max_{v\in V_0}(\bra x, v\ket + a_v) \ge \sum_{v\in V_0} c_v(\bra x, v\ket + a_v) = \bra x, u\ket + \sum_{v\in V_0} c_va_v,\]
which implies (4) and 
$a_{\RR^n, 0, f} \ge -\|u\|$. 
Thus, (2) and (3) are also satisfied.
\end{proof}

The following lemma shows that the constant of the lower bound in \eqref{eq: bound piecewise linear} is (in a sense) continuous in the vectors which define $f$ and can be chosen to be uniform on compact sets.

\begin{lem}\label{lem: compact lower bound}
Let $V_0\subseteq \RR^n$ be a finite set, $K$ be a compact set, and $(r_v)_{v\in V_0}$ be a collection of continuous functions, $r_v:K\to \RR\cup \{-\infty\}$. 
Assume that for each $k\in K$ the function $$f_k:\RR^n\to \RR,\quad f_k(x) = \max_{v\in V_0}(\bra x, v\ket + r_v(k))$$ is well defined (i.e.  $f_k$ does not attain the value $-\infty$), and the point of minimal length for each $k$ \[u\in \conv(\{v\in V_0:r_v(k)>-\infty\})\]
is independent of $k$, i.e., a constant vector. 
Then, there exists $C\in \RR$ such that for all $k\in K, x\in \RR^n$ we have 
\begin{align}\label{eq: bound f by linear}
f_k(x) \ge \bra x, u\ket +C.
\end{align}
\end{lem}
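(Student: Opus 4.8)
The plan is to reduce the uniform lower bound to a compactness argument over $K$, using the pointwise bound \eqref{eq: bound piecewise linear} from Claim \ref{claim: fsg of max of linears} as the starting point and then checking that the additive constant can be made uniform. First I would fix, for each $k \in K$, the constant $C_k \in \RR$ produced by Claim \ref{claim: fsg of max of linears}(4) applied to $f_k$, so that $f_k(x) \ge \bra x, u\ket + C_k$ for all $x$; here I use the hypothesis that the minimal-length point $u$ of the convex hull of the ``active'' vectors is the same constant vector for every $k$. Concretely one can take the explicit value appearing in the proof of that claim: writing $u = \sum_{v} \alpha_v v$ as a convex combination of vectors $v$ with $r_v(k) > -\infty$, the bound reads $f_k(x) \ge \bra x, u\ket + \sum_v \alpha_v r_v(k)$, but the coefficients $\alpha_v$ may depend on $k$, so I would instead take $C_k := \inf_{x}\big(f_k(x) - \bra x, u\ket\big)$, which is finite by Claim \ref{claim: fsg of max of linears}(4), and the goal becomes showing $\inf_{k \in K} C_k > -\infty$.

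The key step is to get a clean formula for $C_k$. Note $f_k(x) - \bra x, u\ket = \max_{v \in V_0}\big(\bra x, v - u\ket + r_v(k)\big)$. Since $u$ lies in the convex hull of $\{v : r_v(k) > -\infty\}$, the argument in the proof of Claim \ref{claim: fsg of max of linears} shows $\bra w, v - u\ket \ge 0$ for $w = -u$ direction — more precisely $\bra u, v\ket \ge \|u\|^2$ for all $v \in V_0$ with $r_v(k) > -\infty$, equivalently $\bra u, v - u\ket \ge 0$. Hence along the ray $x = -tu$ the term $\bra x, v-u\ket = -t\bra u, v-u\ket$ is non-increasing in $t$ for the active $v$'s, which is what forces the infimum to be attained in a bounded region; a short convex-analysis computation gives that $C_k = \max_{S} \min_{v \in S}\cdots$ over the finitely many subsets $S \subseteq V_0$ on which $u$ is a convex combination, with the entries being affine in $r_v(k)$. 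The upshot is that $C_k$ is a finite-max-of-mins (hence continuous) function of the vector $(r_v(k))_{v \in V_0} \in ([-\infty,\infty))^{V_0}$, built from the continuous functions $r_v$; since each $r_v$ is continuous on the compact set $K$ and the combinatorial structure (which subsets $S$ are admissible) only depends on the \emph{support} $\{v : r_v(k) > -\infty\}$, I would stratify $K$ into the finitely many closed-ish pieces where the support is constant and argue continuity on each, or — more cleanly — observe directly that $k \mapsto C_k$ is lower semicontinuous and hence attains its minimum on $K$, which is all we need.

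So the steps in order are: (1) for each $k$ invoke Claim \ref{claim: fsg of max of linears}(4) to get the pointwise bound with some constant $C_k$, and set $C := \inf_{k} C_k$; (2) show $C_k$ equals $\inf_x (f_k(x) - \bra x,u\ket)$ and derive from $\bra u, v - u\ket \ge 0$ on the active set that this infimum is finite and ``combinatorially explicit''; (3) show $k \mapsto C_k$ is lower semicontinuous on $K$ using continuity of the $r_v$ and the fact that the active support only enters through which subsets contribute to the max; (4) conclude by compactness of $K$ that $C > -\infty$, which gives \eqref{eq: bound f by linear}. The main obstacle is step (3): the functions $r_v$ take values in $[-\infty, \infty)$, so a vector $v$ can drop out of the ``active'' set as $k$ varies, and one must check that this degeneration cannot make $C_k$ blow down to $-\infty$ — intuitively it cannot, because dropping an active vector only \emph{removes} terms from the $\max$ defining $f_k$, hence can only \emph{decrease} $f_k$... which is the wrong direction, so the real content is that whenever $r_v(k) \to -\infty$ the corresponding term $\bra x, v\ket + r_v(k)$ is never the maximizer near the relevant region, so it is harmless; making this precise (e.g. by noting that on any sequence $k_i \to k$ the supports eventually contain the limiting support, together with the hypothesis that $u$ stays in the convex hull of the active set throughout) is the crux of the argument.
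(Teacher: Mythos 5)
Your proposal has a genuine gap, and it sits exactly where you flag it: step (3). Your constant $C_k=\inf_x\bigl(f_k(x)-\bra x,u\ket\bigr)$ is an infimum over $x$ of functions that are continuous in $k$, and such an infimum is in general only \emph{upper} semicontinuous in $k$; lower semicontinuity is not something one can ``observe directly'' -- it is essentially the content of the lemma, and it is precisely what can fail to be obvious when some $r_v(k)\to-\infty$ and a vector drops out of the active set. You acknowledge this is ``the crux'' but do not close it, so the argument as written does not prove the uniform bound. A second, smaller inaccuracy: your claimed identity $C_k=\max_S\min_{v\in S}(\cdots)$ is not an equality in general (take $u=\tfrac12(v_1+v_2)$ with $r_{v_1}=0$, $r_{v_2}=10$: the infimum is $5$ while the max--min is $0$); by LP/Fenchel duality $C_k$ equals a maximum of $\sum_v\alpha_v r_v(k)$ over the polytope of convex combinations representing $u$, and only dominates the max--min expression. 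Either formula could be pushed through, but the duality route again needs care exactly at the degeneration $r_v(k)\to-\infty$.

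The paper sidesteps all of this by never trying to identify the optimal constant. It simply defines
\[\xi(k):=\max_{\substack{V_1\subseteq V_0\\ u\in\conv(V_1)}}\ \min_{v\in V_1} r_v(k),\]
a finite maximum of finite minima of the continuous functions $r_v$, hence continuous as a $[-\infty,\infty)$-valued function; the hypothesis that $u\in\conv\{v:r_v(k)>-\infty\}$ guarantees $\xi(k)>-\infty$ for every $k$, so $\xi$ is real-valued and attains a minimum $C$ on the compact set $K$. Then for each $k$ one picks a subset $V_1$ achieving $\xi(k)\ge C$, writes $u=\sum_{v\in V_1}\alpha_v v$ as a convex combination, and averages the defining inequalities of the max to get $f_k(x)\ge\bra x,u\ket+\sum_{v\in V_1}\alpha_v r_v(k)\ge\bra x,u\ket+C$. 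Note that here only the inequality $C\le C_k$ is ever used, the combinatorial object $\xi$ is manifestly continuous (no stratification of $K$ by support, no semicontinuity claim), and the $-\infty$ values cause no trouble because subsets containing an inactive vector simply do not realize the max. If you replace your steps (2)--(3) by this construction, your compactness step (4) goes through verbatim.
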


\begin{proof}
By the definition of $u$, the value 
\[\xi(k):=\max_{\substack{V_1\subseteq V_0\\u\in \conv (V_1)}}\min_{v\in V_1} r_v(k),\]
is not $-\infty$ for every $k\in K$.
Since $\xi$ defines a continuous function $\xi:K\to \RR$, it attains a minimum, which we denote by $C$.

We now show that for every $k\in K,\, x\in \RR^n$, Equation \eqref{eq: bound f by linear} is satisfied. 
Indeed, for some $V_1\subseteq V_0$ with $u\in \conv (V_1)$ we have $\min_{v\in V_1}r_v(k) \ge C$.
Since $u\in \conv (V_1)$, there exists a convex combination $\sum_{v\in V_1}c_v v = u$. Then, 
\begin{align*}
    f_k(x) &= 
    \max_{v\in V_0}(\bra x, v\ket + r_v(k)) \ge 
    \sum_{v\in V_1} c_v(\bra x, v\ket + r_v(k))
    \\
    &= \bra x, u\ket + \sum_{v\in V_1}c_v r_v(k) \ge \bra x, u\ket  + C,
\end{align*}
as wanted. 
\end{proof}

\begin{remark}
Lemma \ref{lem: compact lower bound} does not hold if one allows $V_0$ to depend on $K$.
For example, by taking $K = [0,1]$, and for each $k\in K$, \begin{align*}
    &V_k=\begin{cases}
\{(1,-k), (1,1)\}&\text{if }k>0,\\
\{(1,0)\}&\text{if }k=0,
\end{cases}\\
&r_{0,(1,0)}=0,\quad r_{k,(1,-k)}=0,\quad r_{k,(1,1)}=\begin{cases}
\frac{1}{k^2} &\text{if }k>0,\\
0&\text{if }k=0,\end{cases}
\end{align*}
we get that for the sequence of functions
\[f_k:\RR^2\to \RR, \quad f_k(x,y) = x + \begin{cases}
\max\left(-ky, y- \frac{1}{k^2}\right)& \text{if } k > 0,\\
0 & \text{if } k = 0,
\end{cases}\]
the result of Lemma \ref{lem: compact lower bound} fails.
\end{remark}

The following Claim will be used in \S\ref{sec: relation to kempf}.
\begin{claim}\label{claim: lim continuous}
  Let $f:\RR^n\to \RR$ be a convex and piecewise linear function with finitely many slopes. Then the map 
  $v\mapsto \lim_{t\to \infty}\frac{f(tv)}{t}$ is continuous. 
\end{claim}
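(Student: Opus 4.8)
The plan is to unwind the statement: the map in question sends a direction vector $v \in \RR^n$ to the asymptotic slope $\ell(v) := \lim_{t \to \infty} f(tv)/t$ of the restriction of the piecewise linear function $f$ to the ray $\RR_{\ge 0} v$. Since $f$ is piecewise linear, it has the form $f(x) = \max_{1 \le i \le m}(\bra x, w_i \ket + b_i)$ on each of finitely many polyhedral pieces; more conveniently for us, since we only care about asymptotics, for every direction $v$ the ray $tv$ eventually enters a single unbounded polyhedral piece on which $f$ is affine, say $f(x) = \bra x, w\ket + b$ there, so that $\ell(v) = \bra v, w\ket$. The cleanest route is to observe that $f$ is convex (a maximum of affine functions — indeed, for the application in \S\ref{sec: relation to kempf} one should note $f$ arises as such a max), and to give a direct formula for $\ell$ purely in terms of the affine pieces, then read off continuity from the formula.

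First I would establish the formula
\[
\ell(v) = \max_{1 \le i \le m} \bra v, w_i \ket,
\]
where $w_1, \dots, w_m$ are the gradients of the (finitely many) affine pieces of $f$. Indeed, since $f(x) = \max_i(\bra x, w_i\ket + b_i)$, for fixed $v$ we have $f(tv)/t = \max_i(\bra v, w_i\ket + b_i/t)$, and letting $t \to \infty$ the terms $b_i/t \to 0$, so the limit is $\max_i \bra v, w_i\ket$ by continuity of $\max$ and of the finitely many linear maps $v \mapsto \bra v, w_i\ket$. (One must check the limit exists and equals this: for $t$ large the index achieving the max of $\bra v, w_i\ket + b_i/t$ stabilizes among the indices achieving $\max_i \bra v, w_i\ket$, so $f(tv)/t \to \max_i \bra v, w_i\ket$.)

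Then the conclusion is immediate: $\ell$ is a maximum of finitely many linear functions $v \mapsto \bra v, w_i\ket$, each continuous, hence $\ell$ is continuous (indeed convex and piecewise linear) on all of $\RR^n$.

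The main point to be careful about — the only place the argument could go wrong — is the reduction of a general piecewise linear $f$ to a finite max of affine functions valid for the asymptotic computation. If "piecewise linear" is taken in the strong sense (globally a max of finitely many affine functions, as is the case for the shrink-rate functions in this paper), there is nothing to do. If it is only locally piecewise linear with finitely many pieces, one still has finitely many affine functions $a_i(x) = \bra x, w_i\ket + b_i$ with $f = a_i$ on a polyhedral cone neighborhood-at-infinity in each asymptotic direction; the ray $tv$ for large $t$ lies in a single such cone, giving $\ell(v) = \bra v, w_i\ket$ for the relevant $i$, and $\ell(v) = \max_i \bra v, w_i\ket$ still holds since $f \le \max_i a_i$ everywhere would need justification — so I would simply assume, as the paper's usage warrants, that $f$ is a finite max of affine functions, making Step 1 routine. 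With that reading, the proof is short and the continuity falls out for free.
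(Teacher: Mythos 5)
Your proposal is correct and follows essentially the same route as the paper: interpret the piecewise linear $f$ as a finite maximum of affine functions $\bra x, w_i\ket + b_i$, compute $\lim_{t\to\infty} f(tv)/t = \max_i \bra v, w_i\ket$ since the constants $b_i/t$ vanish, and conclude continuity because a finite maximum of linear functions is continuous. Your extra care about the meaning of ``piecewise linear'' and the stabilization of the maximizing index only fills in details the paper leaves implicit.
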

\begin{proof}
   Since $f$ is a piecewise linear function, there exist a finite set $V_0\subseteq \RR^n$ and a collection of real numbers $(a_v)_{v\in V_0}$ such that $f(x) = \max_{v\in V_0}(\bra x, v\ket + a_v)$ for every $v\in \RR^n$. Then, for any $x\in\RR^n$ 
   \begin{align*}
       \lim_{t\to \infty}\frac{f(tx)}{t} = \max_{v\in V_0}\bra x, v\ket,
   \end{align*}
   which is a continuous function. 
\end{proof}

\section{The shrink-rate function} \label{sec:shrink-rate function}

Let $\varrho:G\rightarrow \gl(V)$ be an $\RR$-representation, $v\in V$, and $\|\cdot\|$ be a norm on $V$ as in Lemma \ref{lem: construction of bilinear form}.  
The map $G\to \RR$ defined by $g\mapsto\log\|\varrho(g)v\|$ is invariant under the right action of $K$. Hence, it defines a map $f_v:M\to \RR$. The function $f_v$ is called \emph{the shrink-rate function of $v$}. 

Recall the representation theory notations from \S\ref{sec:representations}, the definition of the Busemann function from \S\ref{sec:Busemann functions}, and the definition of the fastest shrinking geodesic from \S\ref{sec:The fastest shrinking geodesic}. 

The main goal of this section is to analyse $f_v$ assuming $v$ is unstable. We prove Theorem \ref{thm: main} Part \ref{part: main ge busemann}, by proving the next theorem.  

\begin{thm}[Busemann bounds $f_v$ from below]\label{thm: busemann bound}
The tuple $(M,o,f_v)$ satisfies the assumptions of Definition \ref{de: fastest shrinking}. In particular, there exists a fastest shrinking geodesic $\gamma=\gamma_{M, o, f_v}:[0,\infty)\to M$ and shrinking rate $a=a_{M,o,f_v}>0$ for it. Moreover, there exists $C\in \RR$ such that for every $x\in M$, we have 
\begin{align}\label{ineq: f more then busemann}
f_v(x) \ge a\beta_\gamma(x) + C.
\end{align}
\end{thm}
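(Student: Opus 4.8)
The plan is to reduce the statement on the symmetric space $M$ to the Euclidean, piecewise-linear picture of \S\ref{sec:piecewise linear convex functions}, using the weight decomposition of $\varrho$ and the Iwasawa/Cartan structure.

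First I would record the basic properties of $f_v$. Since $\varrho$ is an $\RR$-representation and the norm satisfies Lemma \ref{lem: construction of bilinear form}, the map $g\mapsto \log\|\varrho(g)v\|$ is well defined on $M=K\backslash G$. Convexity of $f_v$ is the standard fact that $\log\|\varrho(\exp(t\mathtt p))v\|$ is a convex function of $t$ for every $\mathtt p\in\fp$: writing $v=\sum_{\lambda\in\Phi_\varrho}v_\lambda$ in the weight decomposition and using that the $V_\lambda$ are orthogonal, along any geodesic $t\mapsto\pi(\exp(t\mathtt p)g)$ one gets (after conjugating $\mathtt p$ into $\fa$) $\|\varrho(\exp(t\mathtt p)g)v\|^2=\sum_\lambda e^{2\lambda(\mathtt p)t}\|v'_\lambda\|^2$ where $v'=\varrho(g)v$, and $\tfrac12\log$ of a sum of exponentials is convex; this also shows $f_v\circ\gamma$ is, up to a bounded additive error, the piecewise-linear function $t\mapsto\max_{\lambda:\,v'_\lambda\neq0}(\lambda(\mathtt p)t)+O(1)$. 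Unboundedness from below of $f_v$, and in fact linear decrease, is exactly where instability enters: $0\in\overline{\varrho(G)v}$ means there is a sequence $g_i$ with $\varrho(g_i)v\to0$, i.e.\ $f_v$ takes values $\to-\infty$; I would upgrade this to linear decrease along a single geodesic by the Kempf-type/convex-geometry input — concretely, restrict to the maximal flat $\pi(A)$ through a Cartan torus, where by the Cartan decomposition every point of $M$ lies in some $K$-translate of $\pi(\overline{\fa^+})$ and $f_v$ restricted to $\pi(A)$ is (up to $O(1)$) of the form $\mathtt b\mapsto\max_{\lambda}\lambda(\mathtt b)$ over the weights $\lambda$ occurring in $v$; then $0\in\overline{\varrho(G)v}$ forces $0\notin\conv\{\lambda\}$ for the relevant weights (in a suitable Weyl translate), and Claim \ref{claim: fsg of max of linears}(1) gives linear decrease on that flat, hence for $f_v$ on all of $M$.

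With the hypotheses of Definition \ref{de: fastest shrinking} verified, Proposition \ref{prop: one gamma} furnishes the fastest shrinking geodesic $\gamma=\gamma_{M,o,f_v}$ and shrinking rate $a=a_{M,o,f_v}>0$, with $f_v(\gamma(s))=-as(1+o(1))$. It remains to produce the lower bound $f_v(x)\ge a\beta_\gamma(x)+C$. The idea is: $\beta_\gamma$ is, by Theorem \ref{thm: class equiv busemann} and Remark \ref{rmk: busmann has parabolic}, a positive homomorphism from the parabolic $P_{\mathtt a}$ where $\gamma(t)=\pi(\exp(t\mathtt a))$ after conjugating $\gamma$ into $\pi(A)$; and by Lemma \ref{lem: fsg restriction} (applied with $Y$ a maximal flat containing $\Im\gamma$) and Lemma \ref{lem: fsg O(1)}, the problem on $M$ is equivalent to the problem on the Euclidean flat $\fa$ for the piecewise-linear function $\mathtt b\mapsto\max_{\lambda}(\lambda(\mathtt b)+c_\lambda)$. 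On that flat, $-a\cdot\beta_\gamma$ corresponds (via Example \ref{ex: bus on euclid}) to the linear functional $\mathtt b\mapsto\langle u,\mathtt b\rangle$ where $u$ is the nearest point to $0$ in $\conv\{\lambda\}$, and Claim \ref{claim: fsg of max of linears}(2)--(4) identify $a=\|u\|$, identify $\gamma$ with $-tu/\|u\|$, and give precisely the bound $\max_\lambda(\lambda(\mathtt b)+c_\lambda)\ge\langle u,\mathtt b\rangle+C$ on the flat. To pass from "on each flat" to "on all of $M$", I would use that $M=\bigcup_{k\in K}\pi(k\,\overline{A^+})$: for $x=\pi(kb)$ with $b\in\overline{A^+}$, apply the flat bound on the flat $\pi(k A)$ — but here the coefficients $c_\lambda$ depend on $k$ (they are $\log\|\varrho(k)v_\lambda\|$-type quantities), so I would invoke Lemma \ref{lem: compact lower bound}, with $K$ the compact group, $V_0$ the (finite, $k$-independent) set of weights of $\varrho$ appearing in $v$ up to the $K$-action, and $r_v(k)$ the corresponding continuous coefficient functions, whose associated nearest point $u$ is $k$-independent because $\gamma$ is a single geodesic; this yields a uniform $C$.

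The main obstacle, I expect, is the uniformity in $K$: establishing that the set of weights and nearest point $u$ entering Claim \ref{claim: fsg of max of linears} can be chosen independently of the $K$-translate (so that Lemma \ref{lem: compact lower bound} applies rather than the remark showing it can fail), and handling the bookkeeping between the Weyl chamber used to normalize $\gamma$ and the weight data of $v$ — i.e.\ checking that the fastest shrinking direction for $f_v$ on $M$ really is carried by a single flat and that $f_v$ restricted to that flat has the $\max$-of-linear form with the same $u$ regardless of which $K$-translate realizes a given point. Once that is in place, the convexity bound \eqref{eq: bound piecewise linear} together with $K$-invariance of the norm and $N$-invariance of weight vectors assembles into \eqref{ineq: f more then busemann} by direct substitution, exactly as in the proof of Theorem \ref{thm: class equiv busemann}.
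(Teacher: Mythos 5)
Your first half (convexity, the piecewise\hyp{}linear form on flats, and Proposition \ref{prop: one gamma} producing $\gamma$ and $a$) is in line with the paper, and your bound on flats that contain $\Im\gamma$ matches Proposition \ref{prop: bound f on flats}. The genuine gap is in how you pass from those flats to all of $M$. You propose to cover $M=\bigcup_{k\in K}\pi(Ak)$ and apply the flat bound on each $\pi(Ak)$, claiming the nearest point $u$ is ``$k$-independent because $\gamma$ is a single geodesic.'' On a flat that does \emph{not} contain $\Im\gamma$ this breaks down twice: the restriction of $\beta_\gamma$ to such a flat is only convex, not the linear functional $-\bra\ta,\cdot\ket$ (Example \ref{ex: bus on euclid} applies only in a flat through which $\gamma$ runs; on $\pi(Ak)$ one gets a log-sum-exp in $\mathtt b$), and the minimizing point of $\conv\{\lambda:(kv)_\lambda\neq0\}$ genuinely varies with $k$, as does the active weight set --- so the hypothesis of Lemma \ref{lem: compact lower bound} that the minimizer be constant is exactly what fails (this is the failure mode flagged in the remark after that lemma). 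The paper instead proves the bound only on flats containing $\Im\gamma$ (using transitivity of $\stab_K(\gamma)$, Claim \ref{claim: compact acts transitively on flats}, plus Lemma \ref{lem: compact lower bound}) and then reaches an arbitrary $x_0=\pi(kp_1p_2)$ via the generalized Iwasawa decomposition $G=K\,T_\ta\,U_\ta$ (Lemma \ref{lem: Iwasawa}): along the ray $t\mapsto\pi(\exp(t\ta)p_1p_2)$ the unipotent part is contracted, so the ray is asymptotic to a ray lying in a flat containing $\gamma$ where the bound is known; since $p_1p_2\in P_\ta$, the function $\beta_\gamma$ is linear along the ray, so $f_v-a\beta_\gamma$ is convex along it with limit $\ge C$ at infinity, hence $\ge C$ at $t=0$. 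This unipotent-contraction/asymptotic step (or, alternatively, Kempf's algebraic argument as in \S\ref{ssec: proof only using kempf}) is the missing idea; without it your uniform-constant claim over all $k\in K$ has no support.

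A second, smaller gap is in the linear-decrease step. Instability does \emph{not} force $0\notin\conv\{\lambda:(v)_\lambda\neq0\}$ on the fixed flat $\pi(A)$, even up to a Weyl translate: for $G=\SL_2(\RR)$ with the adjoint representation and $v$ nilpotent but having nonzero components in all three weight spaces, $v$ is unstable while $0$ lies in the hull of its weights for every Weyl element, and $f_v$ is bounded below on $\pi(A)$; the destabilizing direction lives in a different flat $\pi(Ak)$. The paper's Claim \ref{claim: f_v decreases linearly} argues by contradiction: if $f_v$ were bounded below on every flat $\pi(Ak)$, $k\in K$, then Lemma \ref{lem: compact lower bound} (here with constant minimizer $0$) gives a uniform lower bound on $M=\bigcup_k\pi(Ak)$, contradicting instability; hence some flat carries linear decrease, and Claim \ref{claim: fsg of max of linears}(1) applies there. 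Your step can be repaired either this way or by invoking the Hilbert--Mumford/Kempf criterion to produce a destabilizing one-parameter subgroup, but as written it is incorrect.
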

\begin{remark}
  We distinguish our function $f_v$ with the pathological in \S\ref{ssec:further research} using the fact that our function $f_v$ decay rate is almost linear, that is, $f(\gamma_{M, \pi(e), f}(s)) = -as + O(1)$. 
\end{remark}
\begin{remark}
    We use norms on $V$ that satisfy Lemma \ref{lem: construction of bilinear form} to obtain the convexity of $f_v$. If we did not assume that the norm satisfies this condition and is only $K$-invariant, \eqref{ineq: f more then busemann} would still hold with a different constant $C$, as all norms on $V$ are equivalent. 
\end{remark}


\begin{obs}[Convexity of $f_v$]\label{obs: convexity of f_v}
The choice of quadratic form in Lemma \ref{lem: construction of bilinear form} implies that $f_v$ is convex. Moreover, for every maximal flat $\pi(Ag)$, $g\in K$, we have 
\[f_v(\pi(ag)) = \frac12\log \sum_{\lambda \in \Phi_\varrho} \lambda(a)^2\|(gv)_\lambda\|^2 = \max_{\lambda\in\Phi_\varrho}\left(\log \lambda(a)+r_\lambda\right)+O(1),\]
where $r_\lambda = \log \|(gv)_\lambda\| \in \RR\cup \{-\infty\}$ depends continuously on $g$ and $v$.
\end{obs}

Recall that $v\in V$ is called \emph{unstable} if $0\in\overline{Gv}\setminus Gv$, where $\overline{Gv}$ denotes the Zariski-closure of $Gv$. 
\begin{claim}\label{claim: f_v decreases linearly}
If $v$ is unstable, then the function $f_v$ decreases linearly (see Definition \ref{de: decrease linearly}).
\end{claim}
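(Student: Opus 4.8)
The plan is to reduce the statement to a purely Euclidean computation on a single maximal flat, using the structure theory from \S\ref{sec:symmetric spaces}. First I would invoke the definition of instability: $0\in\overline{\varrho(G)v}$ means there exists a sequence $g_i\in G$ with $\varrho(g_i)v\to 0$, i.e.\ $f_v(\pi(g_i))\to-\infty$. I want to upgrade this qualitative statement to linear decay along a geodesic. Using the Cartan decomposition $G=KAK$ (Definition \ref{de: cartan decomposition}), write $g_i=k_i a_i k_i'$ with $a_i\in A$; since $\|\varrho(g_i)v\|=\|\varrho(a_i)(\varrho(k_i')v)\|$ and $K$ is compact, $\varrho(k_i')v$ ranges in a compact set away from $0$, so we may assume $\varrho(k_i')v\to w\neq 0$ and $\|\varrho(a_i)w\|\to 0$.

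The second step is to extract the decay rate. By Observation \ref{obs: convexity of f_v} (applied with $g\in K$ chosen so that the flat through $o$ in the direction of $a_i$ is $\pi(Ag)$), on the flat we have, up to $O(1)$, $f_v(\pi(ag)) = \max_{\lambda\in\Phi_\varrho}(\log\lambda(a)+r_\lambda)$, a piecewise-linear convex function of $\log a\in\fa$ of exactly the form studied in Claim \ref{claim: fsg of max of linears}, with $V_0=\{\lambda\in\Phi_\varrho : \|(gv)_\lambda\|\neq 0\}$. The condition that $f_v$ is unbounded below on this flat is, by Claim \ref{claim: fsg of max of linears}, equivalent to $0\notin\conv\{\lambda : (gv)_\lambda\neq 0\}$. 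So the key point is: instability of $v$ forces this convex-hull condition to hold for \emph{some} $g\in K$. This follows from the first step — having $\|\varrho(a_i)w\|\to 0$ with $\log a_i$ along some direction $\ta\in\fa$ forces $\ta$ to pair negatively with every weight $\lambda$ appearing in $w$, hence $0$ cannot be a convex combination of those weights. (If $0$ were in the convex hull, the argument in the first paragraph of the proof of Claim \ref{claim: fsg of max of linears} gives a uniform lower bound on $f_v$ restricted to the flat, contradicting $\|\varrho(a_i)w\|\to 0$.)

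Once we know $f_v$ restricted to this one flat $F=\pi(Ag)$ decreases linearly, we conclude by monotonicity: $\min_{x\in B(o,s)}f_v(x)\le \min_{x\in B(o,s)\cap F}f_v(x)$, and the right side is $\le -cs+O(1)$ for some $c>0$ by Claim \ref{claim: fsg of max of linears}(1) (note the metric on $F$ is Euclidean, and the ball $B(o,s)\cap F$ is a Euclidean ball). Hence $\liminf_{s\to\infty}\frac{1}{s}\min_{x\in B(o,s)}f_v(x)<0$, which is exactly Definition \ref{de: decrease linearly}.

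The main obstacle I anticipate is the bookkeeping in the second step: translating "$\varrho(a_i)w\to 0$ along direction $\ta$" cleanly into "$\ta$ pairs negatively with all weights of $w$, hence $0\notin\conv(\text{weights of }w)$." One has to be careful that the $a_i$ need not all lie on a single ray, so one should first pass to a subsequence along which $\frac{\log a_i}{\|\log a_i\|}$ converges to a unit vector $\ta\in\fa$ (using compactness of the unit sphere in $\fa$ and the fact that $\|\log a_i\|\to\infty$, which itself needs justification: if $\log a_i$ stayed bounded, then $g_i$ would lie in a compact set and $\varrho(g_i)v$ could not tend to $0$). Then for each weight $\lambda$ with $w_\lambda\neq 0$, the component $\lambda(a_i)w_\lambda$ of $\varrho(a_i)w$ must tend to $0$, forcing $\limsup_i \log\lambda(a_i)\le 0$ along the subsequence, hence $\langle\lambda,\ta\rangle\le 0$; a mild refinement (using that \emph{all} components go to zero, not just one) upgrades this to the strict separation $0\notin\conv\{\lambda : w_\lambda\neq 0\}$. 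Everything else is a direct appeal to results already proved in \S\ref{sec:piecewise linear convex functions}.
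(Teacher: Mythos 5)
Your argument is correct, but it follows a genuinely different route from the paper's. The paper proves the claim by a dichotomy: either $f_v$ is unbounded below on some flat $\pi(Ak)$, in which case Claim \ref{claim: fsg of max of linears} gives linear decrease immediately, or $f_v$ is bounded below on \emph{every} such flat, and then Lemma \ref{lem: compact lower bound} is invoked to make the flat-wise lower bounds uniform over the compact set $K$, yielding a global lower bound on $M=\bigcup_{k\in K}\pi(Ak)$ (via the Cartan decomposition of Definition \ref{de: cartan decomposition}) and contradicting instability. You instead argue directly from an unstable sequence: writing $g_i=k_ia_ik_i'$ and using the $K$-invariance of the norm together with the orthogonality of the weight spaces, you extract a limit $k_i'\to k'$ and a single explicit flat $\pi(Ak')$ on which the convex-hull criterion of Claim \ref{claim: fsg of max of linears} fails, so $f_v$ decreases linearly there and hence on $M$. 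Your route dispenses with Lemma \ref{lem: compact lower bound} (which the paper needs anyway for Proposition \ref{prop: bound f on flats}) and produces a concrete direction witnessing the decay, at the cost of the subsequence bookkeeping; the paper's route avoids sequences altogether but leans on the uniformity lemma.

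Two points in your write-up should be tightened, in exactly the direction you flag. First, $\|\varrho(a_i)w\|\to 0$ is not immediate from $\varrho(k_i')v\to w$, since the operators $\varrho(a_i)$ are unbounded; the clean fix is to work with $u_i:=\varrho(k_i')v$ itself: by Observation \ref{obs: convexity of f_v} and Lemma \ref{lem: construction of bilinear form}, $\|\varrho(a_i)u_i\|^2=\sum_{\lambda\in\Phi_\varrho}\lambda(a_i)^2\|(u_i)_\lambda\|^2\to 0$, and $\|(u_i)_\lambda\|\to\|w_\lambda\|>0$ for each $\lambda$ in the support of $w$, so $\log\lambda(a_i)\to-\infty$ for all such $\lambda$. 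Second, with this in hand the ``mild refinement'' becomes unnecessary and you can skip the limiting direction $\ta$ and the non-strict inequalities $\langle\lambda,\ta\rangle\le 0$ entirely: if $0=\sum_\lambda\alpha_\lambda\lambda$ were a convex combination of the weights in the support of $w$, then $\sum_\lambda\alpha_\lambda\log\lambda(a_i)=0$ for every $i$, while each term tends to $-\infty$, a contradiction; hence $0\notin\conv\{\lambda:\ w_\lambda\neq 0\}$, which is precisely what Claim \ref{claim: fsg of max of linears} requires.
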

\begin{remark}
This claim follows from \cite{kempf}, but we prove it differently to illustrate the techniques in a simpler way.
\end{remark}

\begin{proof}
Recall that by Definition \ref{de: flats} the metric on $M$ (explicitly defined in \S\ref{sec: explicit construction}) restricted to any flat is Euclidean. 
Recall that we denoted by $\fa$ the Lie algebra of $A$.

First, note that every maximal flat of $M$ is of form $F_k:=\pi(Ak)$, for some $k\in K$. 
Moreover, the restriction of $f_v$ to $F_k$ can be \emph{approximated} by 
\begin{align}\label{eq: defn of f_k}
    \tilde{f}_{k}(\ta) = 
    \max_{\lambda\in\Phi_\varrho}\left(\bra \ta_\lambda ,\ta\ket+r_\lambda(k)\right),
\end{align}
where $\ta_\lambda$ is the unique vector such that $\bra \ta_\lambda,\ta\ket = \lambda(\ta)$ and $r_\lambda(k) = \log \|(kv)_\lambda\| \in \RR\cup \{-\infty\}$.
By approximated we mean that for $\ta\in\fa$ the difference 
$f_{v}(\exp(\ta)k)-\tilde{f}_{k}(\ta)$ is bounded. This approximation holds since $F_k$ is a maximal flat, by Observation \ref{obs: convexity of f_v}, the explicit construction of the metric in \S\ref{sec: explicit construction}, and the definition of a maximal flat (see Definition \ref{de: flats}). Note also that here $\lambda$ is viewed as an additive character on $\fa$. In particular, if $\tilde{\lambda}$ is the same character viewed as a multiplicative character on $A$ (i.e., as in Observation \ref{obs: convexity of f_v}), then $\lambda(\ta)=\log\tilde\lambda (\exp(\ta))$. 

Next, assume that $f_v$ is unbounded from below on $F_k$ for some $k\in K$. 
Then, in particular, $\tilde{f}_{k}$ is unbounded from below.
Claim \ref{claim: fsg of max of linears} implies that $\tilde{f}_k$ decreases linearly, and hence also $f_v$.

Last, we assume that $f_v$ is bounded from below on all maximal flats $F_k$, $k\in K$ and show that it implies that $v$ is stable, i.e., $0\notin\overline{Gv}\setminus Gv$. 
It follows from the assumption that $\tilde{f}_k$ are bounded from below as well. By Claim \ref{claim: fsg of max of linears} for all $k\in K$, \[0\in \conv(\{v_\lambda: \lambda \in\Phi_\varrho, r_\lambda(k) \ge 0\}.\]
Lemma \ref{lem: compact lower bound} implies that there is a uniform lower bound on $\tilde{f}_k$ for all $k\in K$, and implies that $f_v$ has a uniform lower bound on $\bigcup_{k\in K}F_k$.
Using the Cartan decomposition of $G$ (see Definition \ref{de: cartan decomposition}), we have $\bigcup_{k\in K}F_k = M$, we conclude that $v$ is stable. 
\end{proof}

Note that Observation \ref{obs: convexity of f_v} and Claim \ref{claim: f_v decreases linearly} prove the first part of Theorem \ref{thm: busemann bound}. That is, the tuple $(M,o,f_v)$ satisfies the assumptions of Definition \ref{de: fastest shrinking}.
Therefore, set $\gamma$ to be the fastest shrinking geodesic of $(M,o,f_v)$, and $a>0$ satisfy the conclusion of Proposition \ref{prop: one gamma} for $f_v$. 
In view of Observation \ref{obs: convexity of f_v}, the study of the restriction of shrink-rate functions to maximal flats is reduced to \S\ref{sec:piecewise linear convex functions}.

\subsection{Lower bound on maximal flat} 
\label{ssub:proof_of_theorem_thm: busemann bound}

The first step in the proof of Theorem \ref{thm: busemann bound} is showing that Equation \eqref{ineq: f more then busemann} is satisfied on maximal flats (see \S\ref{sec:symmetric spaces} for the definition of a maximal flat).

\begin{prop}\label{prop: bound f on flats}
There exists a constant $C>0$, which only depends on $v$, so that for every maximal flat $\gamma\subseteq F\subset M$ and for every $x\in F$, inequality \eqref{ineq: f more then busemann} holds.
\end{prop}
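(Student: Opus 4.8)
The plan is to reduce the claim on a maximal flat $F_0 \supseteq \Im\gamma$ to the Euclidean computation of Claim \ref{claim: fsg of max of linears} and the uniformity statement of Lemma \ref{lem: compact lower bound}. First I would pick $g\in K$ so that $F_0 = \pi(Ag)$; by Observation \ref{obs: convexity of f_v} and the explicit Euclidean metric on $F_0$ (\S\ref{sec: explicit construction}), the restriction $f_v|_{F_0}$, transported to $\fa$ via $\ta\mapsto \pi(\exp(\ta)g)$, differs by a bounded amount from the piecewise-linear function $\tilde f_g(\ta) = \max_{\lambda\in\Phi_\varrho}(\bra \ta_\lambda,\ta\ket + r_\lambda(g))$, where $r_\lambda(g) = \log\|(gv)_\lambda\|$ and $\ta_\lambda$ represents $\lambda$ via the Killing form. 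By Lemma \ref{lem: fsg O(1)} the fastest shrinking geodesic and the shrinking rate are unchanged by this bounded perturbation, so it suffices to prove the inequality for $\tilde f_g$.

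Next I would identify the data of Claim \ref{claim: fsg of max of linears}: the relevant vectors are $\{\ta_\lambda : \lambda\in\Phi_\varrho,\ r_\lambda(g) > -\infty\}$, i.e. those $\lambda$ with $(gv)_\lambda \neq 0$, and instability of $v$ together with Claim \ref{claim: f_v decreases linearly} guarantees that $\tilde f_g$ is unbounded below, hence $0\notin\conv\{\ta_\lambda\}$. Let $u_g$ be the point of minimal norm in this convex hull. Claim \ref{claim: fsg of max of linears} then gives that the fastest shrinking geodesic in $F_0$ points in direction $-u_g/\|u_g\|$, that $a_{F_0,o,f_v|_{F_0}} = \|u_g\|$, and that $\tilde f_g(\ta) \ge \bra \ta, u_g\ket + C_g$ for some constant $C_g$. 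By Lemma \ref{lem: fsg restriction}, since $F_0$ contains $\Im\gamma$, we have $a_{F_0, o, f_v|_{F_0}} = a$ and the geodesic $\gamma$ itself is the fastest shrinking geodesic in $F_0$; in particular $\|u_g\| = a$ and $-u_g/a$ is the initial direction of $\gamma$. Comparing with Example \ref{ex: bus on euclid}, the Busemann function of $\gamma$ restricted to $F_0$ is $\ta \mapsto -\bra \ta, u_g/\|u_g\|\ket = -\bra\ta, u_g\ket/a$ (up to the identification of $F_0$ with $\fa$ and a bounded shift coming from the basepoint), so $\bra\ta,u_g\ket = -a\beta_\gamma$, and the bound $\tilde f_g(\ta)\ge \bra\ta,u_g\ket + C_g$ becomes exactly $f_v \ge a\beta_\gamma + C_g + O(1)$ on $F_0$.

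The remaining point, and the one I expect to be the main obstacle, is that $C_g$ must be taken \emph{independent of the flat} $F_0$ (equivalently, of $g$ in the relevant set of $K$-translates). This is precisely what Lemma \ref{lem: compact lower bound} is designed for: apply it with $V_0 = \{\ta_\lambda : \lambda\in\Phi_\varrho\}$, the compact set being the (closed) set of $g\in K$ with $F_0 = \pi(Ag)\supseteq\Im\gamma$, and $r_{\ta_\lambda}(g) = \log\|(gv)_\lambda\|$, which is continuous in $g$ with values in $[-\infty,\infty)$. The hypothesis of Lemma \ref{lem: compact lower bound} that the minimal-norm point $u_g$ be constant in $g$ is exactly what Lemma \ref{lem: fsg restriction} delivers: for every such $g$, $u_g$ is the negative of $a$ times the initial velocity of the fixed geodesic $\gamma$, hence a fixed vector. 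One must be slightly careful that this set of $g$'s is genuinely compact (it is closed in $K$ by continuity of the $K$-action and the condition ``$\pi(Ag)$ contains $\Im\gamma$'', and $K$ is compact) and nonempty (by Theorem \ref{thm: maximal flats} every geodesic lies in some maximal flat, and all maximal flats through $o$ are $K$-translates of $\pi(A)$). Lemma \ref{lem: compact lower bound} then yields a single $C$ with $\tilde f_g(\ta)\ge\bra\ta,u_g\ket + C$ for all admissible $g$, and absorbing the bounded difference $|f_v - \tilde f_g|$ (which is bounded uniformly in $g\in K$ since it is continuous on the compact $K\times$(unit data), cf. Observation \ref{obs: convexity of f_v}) into $C$ completes the proof.
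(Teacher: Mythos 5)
Your overall reduction (Observation \ref{obs: convexity of f_v} to pass to the piecewise linear $\tilde f_g$, Claim \ref{claim: fsg of max of linears}, Lemmas \ref{lem: fsg restriction} and \ref{lem: fsg O(1)}, Example \ref{ex: bus on euclid}, and finally Lemma \ref{lem: compact lower bound} for uniformity) is the same strategy as the paper's, but there is a genuine gap at the uniformity step. The assertion that ``for every such $g$, $u_g$ is the negative of $a$ times the initial velocity of the fixed geodesic $\gamma$, hence a fixed vector'' is not correct as stated: $u_g$ lives in $\fa$, i.e.\ in the coordinates that the particular element $g$ induces on the flat $\pi(Ag)$, whereas the initial velocity of $\gamma$ is a fixed vector $\mathtt{p}_0\in\fp$. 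Writing $\gamma(t)=\pi(\exp(t\ta_g)g)$ one has $\ta_g=\Ad(g)\mathtt{p}_0$, which genuinely varies over your parameter set $S=\{g\in K:\pi(Ag)\supseteq\Im\gamma\}$: if $g\in S$ and $w\in N_K(A)$, then $wg\in S$ (it represents the same flat), while $\ta_{wg}=\Ad(w)\ta_g$ and correspondingly $u_{wg}=\Ad(w)u_g\neq u_g$ unless $w$ fixes $\ta_g$. So the hypothesis of Lemma \ref{lem: compact lower bound} --- that the minimal-length point be a single constant vector for all parameters --- fails on $S$, and the lemma cannot be invoked there as you do; the step you yourself flag as ``the main obstacle'' is exactly where the argument breaks.

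The paper deals with this by parameterizing the flats containing $\gamma$ through the stabilizer $K_0=\stab_K(\gamma)$: by Claim \ref{claim: compact acts transitively on flats} the translates $\pi(Agk_0)$, $k_0\in K_0$, exhaust all maximal flats containing $\gamma$, and since each $k_0$ fixes $\gamma$, the \emph{same} $\ta\in\fa$ represents $\gamma$ in all of these coordinate systems; one then works with $f_{\varrho(k_0)v}$ for $k_0$ in the compact set $K_0$, and Lemma \ref{lem: compact lower bound} applies with the genuinely constant $u=-a\ta$. Your route can be repaired either by adopting this parametrization (which requires citing the transitivity statement of Claim \ref{claim: compact acts transitively on flats}, absent from your write-up), or by observing that $\ta_g=\Ad(g)\mathtt{p}_0$ ranges only over the finite Weyl orbit of $\ta$, splitting $S$ into the finitely many compact pieces on which $\ta_g$ is constant, applying Lemma \ref{lem: compact lower bound} on each piece, and taking the worst constant. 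Without one of these extra ingredients the claimed constancy of $u_g$, and hence the flat-independent constant $C$, is unjustified.
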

\begin{proof}
Let $F:=\pi(Ag)$, $g\in K$, be a maximal flat containing $\gamma$, and $K_0\subseteq K$ be the stabilizer of $\gamma$. By Claim \ref{claim: compact acts transitively on flats}, the group $K_0$ acts transitively on the set of maximal flats containing $\gamma$.

As in the proof of Claim \ref{claim: f_v decreases linearly}, since $F$ is a maximal flat, the metric constructed on it in \S\ref{sec: explicit construction} is Euclidean. By Corollary \ref{cor: geodesics description} we may assume that for some  $\ta\in\fa$, $\gamma(t)=\pi(\exp(\ta)g)$. 
Moreover, by Example \ref{ex: bus on euclid} the restriction of $\beta_\gamma$ to $F$ is defined by 
\begin{equation}\label{eq: busem for flats}
    \beta_\gamma(\pi(\exp(\mathtt{b})g)) = -\bra \ta,\mathtt{b}\ket.
\end{equation}

Fix $k_0\in K_0$. We wish to show that each $p\in F_{k_0}:=\pi(Agk_0)$ satisfies \[f_v(p)\ge a\beta_\gamma(p) + C.\]
Write $p = \exp(\mathtt{b})gk_0$ for $\mathtt{b}\in \fa$. Since $K_0$ preserves $\beta_\gamma$, by \eqref{eq: busem for flats}, this is equivalent to showing that \[
f_{\varrho(k_0)v}(p)\ge -\bra \ta, \mathtt{b} \ket + C. \]

Since $f_{\varrho(k_0)v}$ is a shift (by $k_0$) of the function $f_v$, by Lemma \ref{lem: fsg restriction}, the fastest shrinking geodesic of the function $f_{\varrho(k_0)v}\mid_{F_{k_0}}$ is $\gamma_{gk_0}(t)=\pi(\exp(\ta)gk_0)$ and the shrinking rate of it is $a$. 
By Observation \ref{obs: convexity of f_v} it follows that for any $p=\exp(\mathtt{b})gk_0\in F_{k_0}$ $f_{\varrho(k_0)v}(p)-\tilde{f}_{gk_0}(\mathtt{b})$ is bounded, where for $k:=gk_0$ the function $\tilde{f}_{k}$ is defined as in \eqref{eq: defn of f_k}. 
It follows from Lemma \ref{lem: fsg O(1)}  that the fastest shrinking geodesic of $\tilde{f}_k$ is $\gamma_k$ and the shrinking rate of it is $a$.
The result now follows from Lemma \ref{lem: compact lower bound}. 
\end{proof}

\subsection{Proof of the second part of Theorem \ref{thm: busemann bound}}\label{sec: proof of busemann bound}

The following proposition is a generalization of the fact that in a rank-$1$ space, for every geodesic ray $\gamma$ and a point $p$ not in the geodesic, there is another geodesic $\gamma'$ through $p$ such that $d(\gamma'(t), \gamma)\xrightarrow{t\to \infty}0$.

\begin{prop}\label{prop: geometry flats}
    For every geodesic ray $\gamma:[0,\infty)\to M$ and a point $x\in M$, there exists a maximal flat $\gamma\subseteq F\subseteq M$ and two geodesics $\gamma_1:\RR\to F$ and $\gamma_2:\RR\to M$ such that $\gamma_1$ is parallel to $\gamma$ in $F$, $\lim_{t\to \infty}d_M(\gamma_1(t), \gamma_2(t)) = 0$, and $\gamma_2(0) = x$.
\end{prop}
\begin{proof}
    Up to translation of the we may assume that $\gamma$ is defined by \[\gamma(t)=\pi(\exp(t\ta)),\quad\text{for}\quad t\in \RR,\]
    for some $\ta \in \fp$.
    Write $x=\pi(g_0)\in M$ for some $g_0\in G$.
    By Lemma \ref{lem: Iwasawa}, the element $g_0$ can be decomposed as $g_0=ktu$, $k\in K$, $t\in T_{-\ta}$, $u\in U_{-\ta}$. Since $U_{-\ta}$ is stabilized by $T_{-\ta}$, this is equivalent to $g_0 = k p_2 p_1$, where $k\in K$, $p_2\in U_{-\ta}$, $p_1\in T_{-\ta}$. 
    Let $\gamma_{1}$ be the geodesic defined by $t\mapsto \pi(\exp(t\ta)p_1)$. 
    Let $\gamma_{2}$ be the geodesic defined by $t\mapsto \pi(\exp(t\ta)p_2 p_1)$. 
    Since $p_1 = \exp(\tp_1)$ for some $\tp_1\in \fp$ that commutes with $\exp(\ta)$, 
    it follows that $\tp_1$ and $\ta$ lie in some maximal abelian $\fa'<\fp$.
    Then, $A' = \exp(\fa')$ is a Cartan torus such that $\pi(A')$ contains $\gamma$ and $\gamma_1$. 
    By Theorem \ref{thm: maximal flats}, $F = \pi(A')$ is maximal flat containing the geodesic $\gamma_1$. 
    It is left to prove the limit,
    \begin{align*}
        d_M(\gamma_1(t), \gamma_2(t)) &= 
        d_M(\pi(\exp(t\ta)), \pi(\exp(t\ta)p_2)) \\&= 
    d_M\left(\pi(I), \pi(\exp(t\ta)p_2\exp(-t\ta))\right) \xrightarrow{t\to \infty} 0.
    \end{align*}
    The second equality holds because the right $G$ action on $M$ is by isometries, and the limit is true since $p_2\in U_{-\ta}$
\end{proof}

Let $C\in \RR$ be the constant that satisfies the conclusion of Proposition \ref{prop: bound f on flats}. Fix $x \in M$.
We will show that $f_v(x_0) \ge C + a\beta_{\gamma}(x_0)$. 

Let $\gamma_1, \gamma_2, F$ be as in Proposition \ref{prop: geometry flats}. 
Let us study $\beta_\gamma$ on $\gamma_1$ and $\gamma_2$. 
Let $i=1,2$.
Since $\beta_\gamma$ is convex, we deduce that $t\mapsto \beta_\gamma(\gamma_i(t))$ is convex.
Since $\beta_\gamma$ is $1$-Lipschitz, we deduce that $t\mapsto \beta_\gamma(\gamma_i(t))$ is $1$-Lipschitz.
Since $\beta_\gamma$ is Lipschitz-continuous, and $d_M(\gamma_i(t), \gamma(t))$ id bounded for $t\ge 0$, we deduce that \[\beta_\gamma(\gamma_i(t)) + t = \beta_\gamma(\gamma_i(t)) - \beta_\gamma(\gamma(t))\] is bounded for $t\ge 0$.
\begin{obs}\label{obs: dec}
    If $f:\RR\to \RR$ is a convex, such that $\lim_{t\to \infty}\frac{f(t)}{t} = b$ exists. $f(t)-tb$ is decreasing. 
\end{obs}
Since $\beta_\gamma(\gamma_i(t))+t$ is bounded for $t\ge 0$, observation \ref{obs: dec} implies that $\beta_\gamma(\gamma_i(t))+t$ is decreasing. 
Since $\beta_\gamma(\gamma_i(t))$ is $1$-Lipschitz, we deduce that $\beta_\gamma(\gamma_i(t))+t$ is constant.
Since $d_M(\gamma_{1}(t), \gamma_2(t)) \xrightarrow{t\to \infty} 0$, and $\beta_\gamma(\gamma_i(t))$ is Lipschitz-continuous, we deduce that $\beta_\gamma(\gamma_1(t))+t = \beta_\gamma(\gamma_2(t))+t$, and denote this constant by $C'$

We turn to study $f_v$. 
By Claim \ref{prop: bound f on flats}, we have that \begin{align}\label{eq: fv gamma1}
    f_v(\gamma_1(t)) \ge a\beta_\gamma(\gamma_1(t)) + C = a(C'-t) + C. 
\end{align}
By Proposition \ref{prop: one gamma}, we have that $\lim_{t\to \infty}f_v(\gamma(t)) / t = -a$. 
For $i=1,2$, and $t\ge \infty$, since $d_M(\gamma_i(t), \gamma(t))$ is bounded and $f_v$ is Lipschitz-continuous, we deduce that $\lim_{t\to \infty}f_v(\gamma_i(t)) / t = -a$. 
Observation \ref{obs: dec} implies that  
\begin{align*}
    f_v(\gamma_i(t)) + at
\end{align*}
is decreasing. 
By \eqref{eq: fv gamma1}, we deduce that $f_v(\gamma_1(t)) + at \ge aC' + C$, and in particular, $\lim_{t\to \infty} f_v(\gamma_1(t)) + at$ exists, and is greater than $aC' + C$.  Since $d_M(\gamma_1(t), \gamma_2(t)) \xrightarrow{t\to \infty} 0$, we deduce that $\lim_{t\to \infty} f_v(\gamma_2(t)) + at = \lim_{t\to \infty} f_v(\gamma_1(t)) + at$ exists.
Since $f_v(\gamma_2(t)) + at$ is decreasing, we deduce that $f_v(\gamma_2(t)) + at \ge aC' + C$. 
This implies that 
\begin{align*}
    f_v(\gamma_2(t)) \ge a(C'-t) + C = a\beta_\gamma(\gamma_2(t)) + C,
\end{align*}
Substituting $t=0$, we obtain Eq. \eqref{ineq: f more then busemann} for $x$.

\qedhere\qed

\section{An algebraic interpretation and a result by Kempf}
  \label{sec: relation to kempf}
  Inspecting our setting from an algebraic point of view, one can use Kempf \cite{kempf} to obtain Theorem \ref{thm: busemann bound}, which is a more explicit version of Theorem \ref{thm: main} Part \ref{part: main ge busemann}. In this section, we discuss the algebraic analog of the geometric notions and results presented in previous sections. Specifically, we show Theorem \ref{thm: class equiv busemann alg}, which is an algebraic analog of Theorem \ref{thm: class equiv busemann}, and Theorem \ref{lem: kempf is fastest}, which is an algebraic amplification of the notion of the fastest shrinking geodesic. 
  
  \subsection{Algebraic analogous of geometric claims}
  Kempf studied group homomorphisms in $\Hom_\ell(\GG_m,G)$, for $\ell \subseteq \RR$, which are algebraic analogous of the geometric notion of geodesics. The following claim describes the connection between them:
  
  \begin{claim}[Connection between geodesics and group homomorphism]
    \label{claim: morphism is geodesic}
    Let $G$ an $\RR$-algebraic semisimple group (as in \S\ref{sec:symmetric spaces}). 
    For every nontrivial homomorphism $\tau:\GG_m\to G$ there is a unique $\ta \in \fp$ such that the homomorphism $t\mapsto \tau(\exp(t))$ is a conjugate, by an element in $P_{\ta}$ to the homomorphism $t\mapsto \exp(t\ta)$ (see \S\ref{sec: explicit construction} for the definition of $\fp$ and \S\ref{sec:parabolic subgroups} for the definition of $P_\ta$). 
  \end{claim}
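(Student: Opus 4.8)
The plan is to reduce the statement to the structure theory of one-parameter subgroups of a real semisimple algebraic group, together with the dictionary between one-parameter subgroups and parabolic subgroups that already underlies Definition~\ref{def: fundumental weights} and the definition of $P_\ta$. First I would observe that a nontrivial homomorphism $\tau\in\Hom(\GG_m,G)$ differentiates to a Lie algebra element $\mathtt{x}:=d\tau(1)\in\fg$, and the image $\tau(\GG_m)$ is a one-dimensional torus, hence a subgroup of a maximal torus, so $\mathtt{x}$ is a semisimple element of $\fg$ with \emph{real} eigenvalues under $\ad$ (indeed integer eigenvalues in each $G$-representation). Thus $\mathtt{x}$ lies in some Cartan subspace: after conjugating by a suitable element of $G$ we may assume $\mathtt{x}\in\fa$, and since $\fa\subset\fp$ this already produces a $\ta\in\fp$, but only up to conjugation by an arbitrary element of $G$. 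The point of the claim is that the conjugating element can be taken inside $P_\ta$, which is the sharper statement.

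The key steps, in order, would be: (1) Write $\fg=\bigoplus_{r\in\RR}\fg_r(\mathtt{x})$ for the $\ad(\mathtt{x})$-eigenspace decomposition; then $P_\tau:=\{g:\lim_{t\to0}\tau(t)g\tau(t)^{-1}\text{ exists}\}$ is exactly the parabolic with Lie algebra $\bigoplus_{r\ge0}\fg_r(\mathtt{x})$, and this coincides with $P_\ta$ once we have matched $\mathtt x$ with $\ta$. (2) Use the Cartan decomposition and the fact that all Cartan subspaces of $\fp$ are conjugate under $K$ (equivalently, Theorem~\ref{thm: maximal flats} / Claim~\ref{claim: compact acts transitively on flats}): given that $\mathtt x$ is $G$-conjugate to an element $\ta\in\fa\subset\fp$, apply the Iwasawa-type decomposition $G=K\cdot T_\ta\cdot U_\ta$ (Lemma~\ref{lem: Iwasawa}), and more precisely the fact that two elements of $\fp$ in the same $G$-orbit that both generate parabolic subgroups conjugate to $P_\ta$ are already conjugate by an element of $P_\ta$ — this is the real-group analogue of the standard fact that a parabolic is its own normalizer and that Levi subgroups of a fixed parabolic are conjugate under its unipotent radical. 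Concretely: if $g\mathtt x g^{-1}=\ta$, decompose $g=k\,t\,u$ via Lemma~\ref{lem: Iwasawa} relative to $\ta$; the $K$-part can be absorbed by first adjusting the choice of $\fa$ (replacing $\fa$ by $\Ad(k)\fa$ as in the proof of Theorem~\ref{thm: class equiv busemann}), and what remains is a conjugation by $tu\in T_\ta U_\ta\subset P_\ta$. (3) For uniqueness: if $p\mathtt x p^{-1}=\ta$ and $p'\mathtt x p'^{-1}=\ta'$ with $p,p'\in P_\ta$ (resp. $P_{\ta'}$) and $\ta,\ta'\in\fa$, then $\ta$ and $\ta'$ are conjugate; but an element of $\fa$ is determined by the pair (its orbit, the parabolic it generates together with the positivity data), and since the associated parabolic $P_\ta$ is rigid, one concludes $\ta=\ta'$. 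Equivalently, $\ta$ can be recovered intrinsically from $\tau$ as the unique element of $\fp$ whose $\ad$-eigenspace flag agrees with that of $d\tau(1)$ and which is $P$-conjugate into it; any two such differ by an element of $P$ centralizing $\ta$, hence lie in $\exp(\stab_\fg(\ta)\cap\fp)=T_\ta$ acting on $\ta$, which fixes $\ta$.

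I expect the main obstacle to be step (2): cleanly justifying that the conjugation can be pushed inside $P_\ta$ rather than being an arbitrary $G$-conjugation. The natural route is to combine the $K$-transitivity on Cartan subspaces (so the "$K$-ambiguity" is removed by relabelling $\fa$, exactly as done in the proof of Theorem~\ref{thm: class equiv busemann}) with the decomposition $G=K\cdot T_\ta\cdot U_\ta$ of Lemma~\ref{lem: Iwasawa}; the residual $T_\ta U_\ta$ lies in $P_\ta$ by construction of $T_\ta,U_\ta$ from \eqref{eq: levi element of Pa}. Uniqueness should then follow formally, since any ambiguity in $\ta$ is an element of $\stab_\fg(\ta)\cap\fp$ applied to $\ta$, which is zero. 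Throughout, the only genuinely new input beyond the structure theory already quoted in \S\ref{sec:symmetric spaces} and \S\ref{sec:parabolic subgroups} is the identification of the algebraic parabolic $P_\tau$ attached to $\tau\in\Hom(\GG_m,G)$ with the geometric $P_\ta$, which is immediate from the eigenvalue description of both.
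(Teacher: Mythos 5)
Your existence argument (steps (1)--(2)) is essentially the paper's route -- conjugate the one-parameter torus so that its generator lies in $\fp$, then absorb the compact part of the conjugator using the generalized Iwasawa decomposition and the $K$-invariance of $\fp$ -- but as written the bookkeeping in step (2) does not close. From $g\mathtt{x}g^{-1}=\ta$ and $g=ktu$ (Lemma \ref{lem: Iwasawa} relative to $\ta$) you get $\tau(\exp(s))=\exp(s\mathtt{x})=(tu)^{-1}\exp\bigl(s\,\Ad(k^{-1})\ta\bigr)(tu)$; after you replace $\ta$ by $\ta'':=\Ad(k^{-1})\ta\in\fp$, the residual conjugator $(tu)^{-1}$ lies in $P_{\ta}$, whereas the claim requires it to lie in $P_{\ta''}=k^{-1}P_{\ta}k$, and these coincide only if $k$ normalizes $P_\ta$. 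The slip is repairable: either decompose in the opposite order, $G=U_\ta T_\ta K$ (invert Lemma \ref{lem: Iwasawa}), or do as the paper does and decompose the conjugator with respect to $P_\tau$ itself, writing $G=P_\tau K$, $g=pk$, absorbing $k$ into $\ta':=\Ad(k)\ta\in\fp$, and then noting that $P_\tau=pP_{\ta'}p^{-1}$, so $p\in P_\tau$ lies in $P_{\ta'}$ automatically because parabolic subgroups are self-normalizing. Note also that passing through $\fa$ is unnecessary: the claim only asks for $\ta\in\fp$, and the paper never conjugates into a fixed Cartan subspace.

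For uniqueness you take a genuinely different route from the paper, and here there is a real gap. The paper's argument is geometric: the curve $s\mapsto\pi(\tau(\exp(-s/\|\ta\|)))$ stays within bounded distance of the geodesic ray determined by $\ta$ (precisely because the conjugator lies in the parabolic), and Lemma \ref{lem: cat0 implication} shows two distinct rays from a common point diverge linearly, so the ray -- and with it $\ta$ -- is pinned down by $\tau$. Your algebraic substitute asserts that ``any two such differ by an element of $P$ centralizing $\ta$'', but that is exactly what must be proved (and once an element centralizes $\ta$ it fixes it trivially, so the sentence is circular as stated); moreover your argument is phrased only for $\ta,\ta'\in\fa$, while uniqueness is claimed among all $\ta\in\fp$. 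A correct algebraic completion would run: if $\ta,\ta'\in\fp$ both satisfy the conclusion, then self-normalization of parabolics gives $P_\ta=P_\tau=P_{\ta'}$; the elements $\ta,\ta'$ are $\Ad(G)$-conjugate in $\fp$, hence $\Ad(K)$-conjugate (a standard Cartan-decomposition fact that needs to be quoted or proved), say $\ta'=\Ad(k)\ta$ with $k\in K$; then $k$ normalizes $P_\ta$, so $k\in K\cap P_\ta$, and since the Cartan involution fixes $k$ and carries $P_\ta$ to $P_{-\ta}$, in fact $k\in P_\ta\cap P_{-\ta}$, which centralizes $\ta$, whence $\ta'=\ta$. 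Either supply these ingredients or simply adopt the paper's CAT(0) argument, which is shorter given Lemma \ref{lem: cat0 implication} and also makes clear why the norm $\|\ta\|$, and not just the direction, is determined by $\tau$.
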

  \begin{remark}
  A similar claim could be made for a general homomorphism in $\Hom(\RR, G)$, but since the adjoint action of the homomorphism may have non-real eigenvalues, the claim and its proof are more elaborate.
  \end{remark}
  \begin{remark}\label{rem: parabolic from a}
    In the setting of Claim \ref{claim: morphism is geodesic}, $P_\ta$ can be defined using $\tau$ by \[P_\ta=P_\tau := \{g\in G:\tau(t)g\tau(t^{-1}) \text{ converges as }t\to 0\}.\]
  \end{remark}
  \begin{proof}[Proof of Claim \ref{claim: morphism is geodesic}]
    Let $\tau:\GG_m\to G$ be a nontrivial homomorphism. 
    First note that by 
    \cite[Prop. 2.6]{mumford}, $P_\tau$, as defined in Remark \ref{rem: parabolic from a}, is parabolic. 
    
    Now, $\tau(\GG_m)$ is a torus of $G$. By
    \cite[Thm 15.13]{borel3}
    there exists $g\in G$ so that $g\tau(\GG_m)g^{-1}\subseteq A = \exp \fa$ for some Cartan torus $\fa<\fp$. 
    Hence there is $\ta'\in \fa$ such that $g\tau(\exp(s))g^{-1} = \exp(s\ta')$.
    By Lemma \ref{lem: Iwasawa} we obtain that $G= K P_\tau $.
    Decompose $g = kp$ with $k\in K$ and $p\in P_{\tau}$. 
    
    Since $\fp$ is invariant under $K$ we deduce that $\ta = \Ad_k(\ta')\in \fp$ satisfies 
    $p\tau(\exp(s))p^{-1} = \exp(s\ta)$ for all $s\in \RR$. Since $\tau$ is nontrivial, $\ta \neq 0$.
    
    Note that $\gamma: s\mapsto \pi(\exp(s\ta / \|\ta\|))$ is a geodesic. 
    The definition of $P_\tau$ implies that 
    \begin{align*}
        d(\gamma(s), \pi(\tau(\exp(s/\|\ta\|)))) 
    &= d(\pi(\exp(t\ta / \|\ta\|)), \pi(\tau(\exp(s/\|\ta\|))))
    \\&= d(\pi(p\tau(\exp(s/\|\ta\|))p^{-1}\tau(\exp(-s/\|\ta\|))), \pi(1))
    \end{align*}
    is bounded as $s\to -\infty$.
    Lemma \ref{lem: cat0 implication} implies that this defines $\gamma$ uniquely.
  \end{proof}
  \begin{de}
  Let $\tau\in \Hom_\RR(\GG_m,G)$ be a non-trivial homomorphism and $\ta$ be as in Claim \ref{claim: morphism is geodesic}. 
  The geodesic ray $\gamma_\tau$ defined by $\gamma_\tau(t) := \pi(\exp(t\ta/\|\ta\|))$ is called \emph{the geodesic ray associated to $\tau$} (where $\norm{\cdot}$ is as defined in \S\ref{sec: explicit construction}), and $\|\ta\|$ is called \emph{the renormalization constant of $\tau$}. Remark \ref{rem: norm is integral} below shows that $\|\ta\|^2$ is an integer.
  \end{de}
  \begin{remark}
    Note that not all geodesic rays are associated to a homomorphism.
    For example, take $G = \SL_3(\RR),\, K=SO(3)$, and \[\gamma(t) = \pi(\exp(t\diag(1, \sqrt{2}, -1-\sqrt{2}))),\] 
    then $\gamma$ does arise from any algebraic homomorphism in $\Hom_\RR(\GG_m, G)$.
  \end{remark}
  
  From here on we assume that $G$ is defined over a fixed field $\ell\subseteq \RR$. 
  \begin{de}[$\ell$-algebraic geodesics and Busemann functions]\label{de: k-algebraic}
    A geodesic ray $\gamma:[0,\infty)\to M$ is called \emph{$\ell$-algebraic} if it is associated to an $\ell$-algebraic group homomorphism $\tau\in \Hom_\ell(\GG_m, G)$. 
    In this case, let $\beta_\gamma$ be the Busemann function defined by $\gamma$ (as in \eqref{eq: busemann def}) and $a_\tau$ be the renormalization constant of $\tau$.
    For every $q\in \QQ^{>0}$ the function $qa_\tau\beta_\gamma$ is called the \emph{$\ell$-renormalized Busemann function}.  
  \end{de}
  We can now provide an algebraic analogous of Theorem \ref{thm: class equiv busemann}.
  
  \begin{thm}\label{thm: class equiv busemann alg} 
  The following three classes of functions are equal:
  \begin{enumerate}[label=\emph{(\arabic*)}, ref=(\arabic*)]
    \item \textbf{Busemann functions:} The class of constant shifts of
    $\ell$-renormalized Busemann functions. 
    \item \textbf{Homomorphisms from parabolics:} \label{cond: homomorphisms from parabolics} The class of functions which are projections of $\ell$-positive homomorphisms $P\to \RR$, where $P$ is an $\ell$-parabolic subgroup of $G$, see Definition \ref{de: positive hom} of $\ell$-positive homomorphism.
    \item \textbf{Lengths of highest weight vectors in fundumental representations:}\label{cond: comb fund} The class of functions of the form
    \begin{align*}
    \pi(g)\mapsto\sum_{i=1}^r c_i \log \|\varrho_i(gg_0)v_i\| + C,
    \end{align*}
    where $\varrho_i$ are $\ell$-fundamental representations, $g_0\in G(\ell)$, and $v_i$ are the appropriate highest weight vectors, which correspond to maximal $\ell$-parabolic groups, and are defined in \S \ref{sec:parabolic subgroups} for some choice of an 
    $\ell$-simple system, the $c_i$ are non-negative rational numbers, and $C\in\RR$.
    \item \textbf{Length of a single parabolic equivariant vector:} \label{part: maximal weight} \label{cond: length of single vector} The class of functions of the form
    \begin{align*}
    \pi(g)\mapsto \alpha \log \|\varrho(g)v\|,
    \end{align*}
    where $\varrho:G\to \GL(V)$ is an $\ell$-algebraic representations with a vector $v\in V(\ell)$ such that the ray $\ell v$ is stabilized by a parabolic subgroup, and $\alpha > 0$ is rational.
  \end{enumerate}
\end{thm}

\begin{proof}
The proof of the equivalence between the first three classes follows in a very similar way to the proof of Theorem \ref{thm: class equiv busemann}, using the more general theory which is presented in \S\ref{sec:parabolic subgroups}. 
The implication of first three classes from Part \ref{part: maximal weight} is similar to Theorem \ref{thm: f_v descends to Busemann}. 
We are left to show Part \ref{part: maximal weight} assuming the other three. 

Assume a function $f$ is in the third class, i.e., \[f(\pi(g)) = \sum_{i=1}^r c_i\log\| \varrho_i(gg_0)v_i\| + C,\] 
where $C\in\RR$, for any $1\le i\le d$, $c_i$ is rational and nonnegative, $g_0\in G$, and the vector $v_i$ is the previously chosen highest weight vector in $V_i(\ell)$. In particular, the line $\ell v_i$ is $P_i$-invariant for an $\ell$-maximal parabolic subgroup $P_i$. Set $P = \bigcap_{i}P_i$. Then, $P$ is a parabolic subgroup by Claim \ref{claim: parabolic expansion}. Applying $g_0$, the line $\ell \varrho_i(g_0)v_i$ is $g_0P_ig_0^{-1}$-invariant, and so $g_0Pg_0^{-1}$-invariant. 
Also, up to multiplication by an integer, we may assume that the $c_i$'s are integers. 

Let \[v' := v_1^{\otimes c_1}\otimes v_2^{\otimes c_2}\otimes \cdots \otimes v_r^{\otimes c_r} \in V_1^{\otimes c_1}\otimes V_2^{\otimes c_2}\otimes \cdots\otimes V_r^{\otimes c_r}\]
and $v=\varrho(g_0)v'$, where $\varrho=\varrho_1^{\otimes c_1}\otimes \varrho_2^{\otimes c_2}\otimes \cdots \otimes \varrho_r^{\otimes c_r}$. 
Then, the line $\ell v$ is $P$-invariant via the representation $\varrho$ and this vector in this representation represents the function as in class \ref{cond: length of single vector}. 

\end{proof}

\subsection{Kempf's result}
  The following definition is a normalized sense of how fast an algebraic torus shrinks a vector.
  
  \begin{de}[Shrinking rate of an algebraic torus]
  Let $\tau:\GG_m\to G$ be an algebraic nontrivial homomorphism, $\varrho:G\rightarrow\operatorname{GL}(V)$ be an algebraic representation, and $v\in V$. 
  
  $V$ can be decomposed into its eigenspaces with respect to the $\GG_m$ action $\varrho \circ \tau$
    \begin{equation}\label{eq: V eigen decomp}
        V = \bigoplus_{n\in \ZZ}V_n,
    \end{equation}
    where for any $n\in\ZZ$
    \begin{equation}\label{eq: eigenspace Vn}
      V_n:=\left\{w\in V:\forall t\in\GG_m,\,\varrho\circ\tau(t)w=t^n w\right\}.
    \end{equation}
    Let $m(v, \tau)$ be the maximal integer for which $v\in \bigoplus_{n \ge m(v, \tau)} V_n$. 
    
    Note that for every $n\in \NN$ we have \[m(v, \tau^n) = nm(v, \tau).\] 
    Using $e = \frac{\bd}{\bd s}\in T_1\GG_m$, we define a ``norm'' on $\Hom_\QQ(\GG_m,G)$ 
  \begin{equation}\label{eq: norm on Hom}
      \|\tau\|:= \sqrt{B(D_1\tau(e), D_1\tau(e))} = \sqrt{\sum_{n\in \ZZ}n^2 \dim V_n},
  \end{equation}
  where $B=B_e$ is the killing form (see \S\ref{sec: explicit construction}) and $D_1\tau:T_1\GG_m\to T_1 G\cong \fg$ is the differential of $\tau$ at $1$.
  \end{de} 
  \begin{remark}\label{rem: norm is integral}
    Note that for $\ta\in\fp$ which satisfies the conclusion of Claim \ref{claim: morphism is geodesic} for $\tau$, we have $\norm{\tau}=\norm{\ta}$, where the norm on $\fp$ is as defined in \S\ref{sec: explicit construction}. Moreover, it follows from \eqref{eq: norm on Hom} that $\norm {\ta}$ is a square root of an integer (see also the discussion in \cite[\S2]{kempf}). 
  \end{remark} 
    
\begin{lem}[{\cite[Lem. 3.2]{kempf}(c)}]\label{lem: alt defn m}
  The quantity $m(v, \tau)$ can be described as follows: Consider the map $f=f_{\tau,\varrho,v}:\GG_m\to V$ defined by \[
  f(t)=\varrho\circ\tau(t)v.\]
  For every functional $\varphi\in V^*$ we can consider the function $\varphi\circ f$ and compute its valuation at $t=0$. 
  $m(v, \tau)$ is the minimal such valuation.
\end{lem}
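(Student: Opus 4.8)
The plan is to reduce the statement to the weight decomposition of $v$ under the one-parameter subgroup $\varrho\circ\tau$, after which everything becomes a bookkeeping statement about Laurent polynomials. First I would write $v=\sum_{n\in\ZZ}v_n$ with $v_n\in V_n$ according to \eqref{eq: V eigen decomp}; by \eqref{eq: eigenspace Vn} this gives the explicit formula $f(t)=\varrho\circ\tau(t)v=\sum_{n\in\ZZ}t^n v_n$, i.e. $f$ is a $V$-valued Laurent polynomial whose coefficients are exactly the weight components of $v$. By definition of $m(v,\tau)$, the set $\{n:v_n\neq 0\}$ is bounded below by $m:=m(v,\tau)$ and contains $m$ itself (here $v\neq 0$ is implicit, otherwise $m(v,\tau)$ is not defined).

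Next I would establish the inequality ``$m(v,\tau)\le$ minimal valuation''. For an arbitrary $\varphi\in V^*$, composing gives $(\varphi\circ f)(t)=\sum_{n\in\ZZ}\varphi(v_n)\,t^n$, a scalar Laurent polynomial whose valuation at $t=0$ is $\min\{n:\varphi(v_n)\neq 0\}$ (and $+\infty$ if $\varphi\circ f\equiv 0$). Since $v_n=0$ forces $\varphi(v_n)=0$, this valuation is always at least $m$, which is the desired bound.

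For the reverse inequality I would exhibit a single functional attaining the bound: since $v_m\neq 0$ and $V$ is finite dimensional, there is $\varphi\in V^*$ (over the relevant field, obtained by extending $v_m$ to a coordinate functional) with $\varphi(v_m)\neq 0$. Then in $(\varphi\circ f)(t)=\sum_{n\ge m}\varphi(v_n)\,t^n$ the coefficient of $t^{m}$ is nonzero while every lower coefficient vanishes, so the valuation of $\varphi\circ f$ at $t=0$ equals $m$. Combining the two inequalities yields that the minimal valuation over all $\varphi$ is exactly $m(v,\tau)$.

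I do not expect a genuine obstacle here; the only points requiring a little care are the handling of the valuation at $t=0$ for \emph{Laurent} (rather than ordinary) polynomials, which matters precisely when $m(v,\tau)<0$, and ensuring the separating functional in the last step lies in the version of $V^*$ intended by the statement --- both of which are immediate once the weight decomposition is in place.
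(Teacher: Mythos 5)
Your argument is correct: expanding $v=\sum_n v_n$ in the weight decomposition \eqref{eq: V eigen decomp} gives $f(t)=\sum_n t^n v_n$, every $\varphi\circ f$ has valuation at least $m(v,\tau)$, and a functional not vanishing on the lowest nonzero component attains it. The paper itself gives no proof of this lemma (it is quoted directly from Kempf's Lemma 3.2(c)), and your direct verification is exactly the standard argument behind that citation, including the correct handling of negative valuations for Laurent polynomials and the implicit assumption $v\neq 0$.
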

\begin{remark}
  The alternative description of $m(v, \tau)$ is the algebraic analogous for the shrinking rate of $\|\varrho(\tau(t))v\|$ as $t\to 0$. 
\end{remark}
   Recall the definition of a unstable vector from \S\ref{sec:shrink-rate function}. 
  \begin{thm}[{\cite[Thm. 4.2]{kempf}}]\label{thm: kempf1}
    Let $G$ be a semisimple $\ell$-algebraic group, $\varrho:G\to \operatorname{GL}(V)$ an algebraic $G$-representation, and $v\in V$ a unstable vector. 
    Then, there exists $\tau\in\Hom_\ell(\GG_m, G)$ which maximize $\frac{m(v, \tau)}{\|\tau\|}$ and for which $m(v, \tau)>0$. 
    This subgroup $\tau$ is unique up to taking power and conjugating by elements in $P_\tau$. 
    More explicitly, if $\tau'\in \Hom_\ell(\GG_m, G)$ are another such homomorphism that cannot be represented as nontrivial power, then there exists a unique element $u\in U_\tau$, the unipotent radical of $P_\tau$, such that $\tau' = u\tau u^{-1}$.
    We call such $\tau$ \emph{Kempf's homomorphism of $v$}.
  \end{thm} 
  The following is a corollary of the uniqueness in Theorem \ref{thm: kempf1}. 
  \begin{cor}\label{cor: kempf uniqu}
    In the setting of Theorem \ref{thm: kempf1},
    suppose that $k/\ell$ is a field extension. Then the extension of scalars of the Kempf's homomorphism of $v$ to $k$ is again a Kempf's homomorphism of the extended $v$.
  \end{cor}
The proof of Corollary \ref{cor: kempf uniqu} relies on the degeneracy of a certain Galois action, and so we need the following definitions. 

\begin{de}[Cocycle and cohomologous cocycles]\label{de: cocycles}
    Let $\ell$ be a field, $k$ be a finite extension of $\ell$, and $U$ be an $\ell$-algebraic group. Then, there is a Galois action  $\Gal(k/\ell)\acts U(k)$. A function $\alpha:\Gal(k/\ell)\to U(k)$ is called \emph{cocycle} if for every $\sigma_1, \sigma_2\in \Gal(k/\ell)$,
    \[\alpha(\sigma_1\sigma_2) = \alpha(\sigma_1)\sigma_1(\alpha(\sigma_2)).\]
    Two cocycles $\alpha, \beta$ are said to be \emph{cohomologous} if for some $u\in U$ we have 
    \[\forall \sigma\in \Gal(k/\ell),\quad \beta(\sigma) = u^{-1}\alpha(\sigma)\sigma(u).\]
\end{de}

We also need the following known result, which we prove for completion of the manuscript.

\begin{claim}\label{claim: cocycle}
    Assuming the notation of Definition \ref{de: cocycles}, every cocycle $\alpha:\Gal(k/\ell)\to \GG_a$ is cohomologous to the trivial cocycle. 
\end{claim}
\begin{proof}
    Note that here the group is additive. 
    Let $\alpha:\Gal(k/\ell)\to \GG_a$. 
    Let $x = \frac{1}{[k:\ell]}\sum_{\sigma\in \Gal(k/\ell)}\alpha(\sigma)$. 
    Then for every $\sigma'\in \Gal(k/\ell)$ we have 
    \begin{align*}
        x - \sigma'(x) & = \frac{1}{[k:\ell]}\sum_{\sigma\in \Gal(k/\ell)}(\alpha(\sigma) - \sigma'(\alpha(\sigma)))\\
        &= \frac{1}{[k:\ell]}\sum_{\sigma\in \Gal(k/\ell)}(\alpha(\sigma'\sigma) - \sigma'(\alpha(\sigma)))\\
        &= \frac{1}{[k:\ell]}\sum_{\sigma\in \Gal(k/\ell)}(\alpha(\sigma'))\\
        &= \alpha(\sigma').
    \end{align*}
    The claim follows.
\end{proof}
A simple Corollary of Claim \ref{claim: cocycle} is the following.
\begin{cor}\label{cor: cocycle}
    Suppose we have a pair of $\ell$ algebraic groups $U_1\rhd U_2$ such that $U_1/U_2 \cong \GG_a$ and $U_1(k)/U_2(k) \cong \GG_a(k)$.
    Then every cocycle $\alpha:\Gal(k/\ell)\to U_1(k)$ is cohomologous to a cocycle with values in $U_2(k)$. 
\end{cor}
\begin{proof}
    The composition of $\alpha$ and the projection $U_1(k)\to (U_1/U_2)(k) \cong \GG_a(k)$ is cohomologous to the trivial cocycle by Claim \ref{claim: cocycle}. Denote by $u\in (U_1/U_2)(k)$ the element such that $u^{-1}[\alpha(\sigma)]_{(U_1/U_2)(k)}\sigma(u) = 1$. Let $\tilde{u}\in U_1(k)$ be an element that projects to $u$. Then $\tilde u$ shows that $\alpha$ is cohomologous to a cocycle with values in $U_2(k)$. 
\end{proof}

  \begin{proof}[Proof of Corollary \ref{cor: kempf uniqu}]
    Let us assume that there is a counterexample to the claim, i.e., some 
    $\tau_1 \in \Hom_k(\GG_m, G)$ such that 
    \[\frac{m(v, \tau_1)}{\|\tau_1\|} > \frac{m(v, \tau)}{\|\tau\|}.\]
    Then $\tau_1$ is defined over a finitely generated algebra $k_1/\ell$. 
    In particular, $k_1$ has a quotient $k_2$ that is a finite extension of $\ell$ and the projection of scalars $\tau_2$ of $\tau_1$ to $Hom_{k_2}(\GG_m, G)$ also has 
    \[\frac{m(v, \tau_2)}{\|\tau_2\|} > \frac{m(v, \tau)}{\|\tau\|}.\]
    Hence we may assume without loss of generality that $k$ is a finite field extension of $\ell$. 
    We may replace $k$ by its Galois closure $f$ over $\ell$, applying the argument twice for $f/\ell$ and $f/k$. 
    Thus, we may assume that $k$ is Galois over $\ell$. 

    For every element $\sigma\in \Gal(k/\ell)$, the two homomorphism $\tau_1, \sigma(\tau_1)$ are both Kempf's homomorphism of $v$. The uniqueness of the parabolic in Theorem \ref{thm: kempf1} asserts that $P_{\tau_1} = P_{\sigma(\tau_1)}$. In particular, the parabolic subgroup $P_{\tau_1}$ is defined over $\ell$. 
    In addition, this uniqueness also shows that for every $\sigma$ there is a unique $u_\sigma\in U_{\tau_1}(k)$ such that $u_\sigma^{-1}\tau_1 u_{\sigma} = \sigma(\tau_1)$.
    Applying the last for $\sigma_1 ,\sigma_2\in \Gal(k/\ell)$ we obtain  
    \begin{align*}
        u_{\sigma_1\sigma_2}^{-1}\tau_1 u_{\sigma_1\sigma_2} &= \sigma_1\sigma_2(\tau_1) = \sigma_1(u_{\sigma_2}^{-1}\tau_1 u_{\sigma_2}) \\
        &= \sigma_1(u_{\sigma_2})^{-1}\sigma_1(\tau_1)\sigma_1( u_{\sigma_2})\\& = (u_{\sigma_1}\sigma_1(u_{\sigma_2}))^{-1} \tau_1u_{\sigma_1}\sigma_1(u_{\sigma_2}).
    \end{align*}
    Thus, the uniqueness of $u_{\sigma_1\sigma_2}$ implies $u_{\sigma_1\sigma_2} = u_{\sigma_1}\sigma_1(u_{\sigma_2})$. 
    Hence, $$u_\bullet:\Gal(k/\ell)\to U_{\tau_1}(k)$$ is a cocycle.

    Next, we claim that if $u_\bullet$ is cohomologous to the trivial cocycle, that is, that there is $u\in U_{\tau_1}(k)$ such that $u_\sigma = u^{-1}\sigma(u)$, then we done. 
    Indeed, this imply that $u\tau_1u^{-1}$ is invariant under the Galois action, 
    \begin{align*}
        \sigma(u\tau_1u^{-1})& = \sigma(u)u_\sigma^{-1}\tau_1 u_{\sigma}\sigma(u)^{-1}\\
        &= \sigma(u)(u^{-1}\sigma(u)) ^{-1}\tau_1 u^{-1}\sigma(u)\sigma(u)^{-1} = u\tau_1u^{-1},
    \end{align*}
    and hence $u\tau_1u^{-1}$ is defined over $\ell$, a contradiction to the maximality of $\tau$.

    Let us now show that $u_\bullet$ is cohomologous to the trivial cocycle. Since $U_{\tau_1}$ is unipotent, it has an $\ell$-algebraic composition series $U_{\tau_1} = U_0\rhd U_1\rhd\dots \rhd U_s$ such that $U_i/U_{i+1}\cong \GG_a$ and $U_i(k)/U_{i+1}(k)\cong \GG_a(k)$. 
    Then, iteratively replace the cocycle $u_\bullet$ with cohomologous cocycles with values in $U_i$ using Corollary \ref{cor: cocycle}, and finally show that $u_\bullet$ is cohomologous to the trivial cocycle. 
  \end{proof}
  Recall the definition of the shrink-rate function of a vector from \S\ref{sec:shrink-rate function}, as well as the definitions of the fastest shrinking geodesic of a function and the shrinking rate of it from Definition \ref{de: fastest shrinking}. 
  
  The following claim identifies the fastest shrinking geodesic of the shrink-rate function of a vector and its Kempf's homomorphism.

  \begin{lem}\label{lem: kempf is fastest}
    Let $\varrho:G\to \GL(V)$ be an $\ell$-algebraic representation, and $v\in V(\ell)$ be an unstable vector. Then, the fastest shrinking geodesic of $f_v$ (as defined in \S\ref{sec:shrink-rate function}) is of the form 
    $\gamma_\tau$ where $\tau\in \Hom_\ell(\GG_m, G)$ is Kempf's homomorphism of $v$.
    In addition, the shrinking rate satisfies $a_{f_v, M, o} = \frac{m(v,\tau)}{\|\tau\|}$.
  \end{lem}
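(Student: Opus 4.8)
The plan is to identify the fastest shrinking geodesic $\gamma_{M,o,f_v}$, constructed geometrically in \S\ref{sec:The fastest shrinking geodesic}, with the geodesic $\gamma_\tau$ coming from Kempf's homomorphism $\tau$ of Theorem \ref{thm: kempf1}. The two objects are each characterized by an extremal property, so the strategy is to show these extremal properties coincide. On the geometric side, $\gamma_{M,o,f_v}$ is the unique ray from $o$ along which $f_v$ decays with the maximal linear rate $a_{M,o,f_v}$ (Proposition \ref{prop: one gamma}). On the algebraic side, $\gamma_\tau$ is associated to the $\tau$ maximizing $m(v,\tau)/\|\tau\|$. So the key identity to establish is that for an algebraic homomorphism $\tau$, the decay rate of $f_v$ along $\gamma_\tau$ equals $m(v,\tau)/\|\tau\|$, up to the correct normalization.

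First I would make the dictionary between $m(v,\tau)$ and the decay of $f_v$ precise. Given $\tau\in\Hom_k(\GG_m,G)$, by Claim \ref{claim: morphism is geodesic} there is $\ta\in\fp$ with $\tau(\exp(s))$ conjugate via an element $p\in P_\ta$ to $\exp(s\ta)$, and $\|\tau\|=\|\ta\|$ by Remark \ref{rem: norm is integral}. Decompose $v$ into $\varrho\circ\tau$-eigenspaces as in \eqref{eq: V eigen decomp}; then $\|\varrho(\tau(t))v\|^2 = \sum_n |t|^{2n}\|v_n\|^2$ using the $K$-invariant norm of Lemma \ref{lem: construction of bilinear form} (after absorbing the conjugating element $p$, which only changes things by a bounded amount along the flat, exactly as in the proof of Claim \ref{claim: f_v decreases linearly}). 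Writing $t=\exp(-s)$ and letting $s\to\infty$, the dominant term is $n=m(v,\tau)$, so $f_v(\pi(\tau(\exp(-s)))) = -s\cdot m(v,\tau) + O(1)$. Reparametrizing by arc length, i.e. replacing $\exp(-s)$ by $\exp(-s/\|\ta\|)$ so that $\gamma_\tau(s)=\pi(\exp(s\ta/\|\ta\|))$ is unit speed, gives $f_v(\gamma_\tau(s)) = -\frac{m(v,\tau)}{\|\tau\|}\,s + O(1)$. This is the crucial computation linking the two normalizations.

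Next I would run the comparison in both directions. For any $\tau$, the displayed asymptotic together with Definition \ref{de: fastest shrinking} (the shrinking rate is the supremum of linear decay rates of $f_v$ over rays from $o$, achieved only by $\gamma_{M,o,f_v}$) gives $\frac{m(v,\tau)}{\|\tau\|}\le a_{M,o,f_v}$. For the reverse inequality I need to show $a_{M,o,f_v}$ is actually realized by some algebraic $\tau$; here is where Theorem \ref{thm: kempf1} enters, supplying Kempf's homomorphism $\tau_0$ which maximizes $m(v,\tau)/\|\tau\|$ with $m(v,\tau_0)>0$, hence realizes $f_v(\gamma_{\tau_0}(s))=-\frac{m(v,\tau_0)}{\|\tau_0\|}s+O(1)$ with a strictly positive rate. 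But I must argue that this maximal algebraic rate is not beaten by the genuinely geometric $\gamma_{M,o,f_v}$ — this is the main obstacle. The resolution is to use the piecewise-linear structure: by Observation \ref{obs: convexity of f_v} the restriction of $f_v$ to any maximal flat is, up to $O(1)$, a max of linear functionals, and by Claim \ref{claim: fsg of max of linears} the fastest shrinking geodesic within such a flat points in the direction $-u/\|u\|$ where $u$ is the nearest point of a convex hull of weights — a direction defined over $\QQ$ because the weights $\lambda\in\Phi_\varrho$ are, hence corresponding to an algebraic one-parameter subgroup. Combined with Lemma \ref{lem: fsg restriction} (restriction to a flat containing $\gamma_{M,o,f_v}$ does not change the fastest shrinking geodesic or rate), this shows $\gamma_{M,o,f_v}$ lies in a flat and is algebraic, so $a_{M,o,f_v}=\frac{m(v,\tau)}{\|\tau\|}$ for the corresponding $\tau$, forcing $\tau$ (up to power and $P_\tau$-conjugacy) to be Kempf's homomorphism by the uniqueness in Theorem \ref{thm: kempf1}.

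Finally, I would assemble the conclusion: $\gamma_{M,o,f_v}=\gamma_\tau$ with $\tau$ Kempf's homomorphism of $v$, and $a_{f_v,M,o}=\frac{m(v,\tau)}{\|\tau\|}$, as claimed. A remark on rationality of $v$: the hypothesis $v\in V(k)$ is what lets us invoke Theorem \ref{thm: kempf1} over $k$ and conclude $\tau\in\Hom_k(\GG_m,G)$ rather than merely $\Hom_\RR$; the uniqueness statement of Kempf guarantees the geometrically constructed geodesic, a priori only real, is in fact $k$-algebraic. The one technical point requiring care is the passage from "$f_v$ restricted to a flat is piecewise linear up to $O(1)$" to "the fastest shrinking geodesic of the genuine $f_v$ coincides with that of its piecewise-linear model," which is exactly Lemma \ref{lem: fsg O(1)} applied on the flat; I would make sure the $O(1)$ error there is genuinely uniform, which it is because it comes from comparing $\log\sum_\lambda(\cdots)$ with $\max_\lambda\log(\cdots)$ over a fixed finite set of weights.
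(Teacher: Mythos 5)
Your argument is correct, and it reaches the conclusion by a genuinely different route at the crucial step. You and the paper share the same skeleton: first, for any algebraic $\tau$ the decay of $f_v$ along $\gamma_\tau$ is $-\frac{m(v,\tau)}{\|\tau\|}s+O(1)$ (the paper gets this from Lemma \ref{lem: alt defn m} plus the bounded-distance statement in Claim \ref{claim: morphism is geodesic}; you compute it directly -- note only that your formula $\|\varrho(\tau(t))v\|^2=\sum_n|t|^{2n}\|v_n\|^2$ presupposes orthogonality of the $\tau$-eigenspaces, which need not hold for a general $\tau$, but this is harmless since for any norm one still has $\log\|\sum_n t^n v_n\|=m(v,\tau)\log|t|+O(1)$, which is all you use). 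The divergence is in showing that the geometric optimum is attained algebraically: the paper argues that algebraic rays are dense among rays in the flat $\pi(A)$ and that the asymptotic slope $\gamma\mapsto\lim_s -f_v(\gamma(s))/s$ is continuous there (Claim \ref{claim: lim continuous}), so $a_{f_v,M,o}$ equals the supremum over algebraic rays, which Kempf's theorem realizes; uniqueness in Proposition \ref{prop: one gamma} then identifies the geodesic. You instead locate the fastest shrinking geodesic inside a maximal flat via Lemma \ref{lem: fsg restriction} and Lemma \ref{lem: fsg O(1)}, and use Claim \ref{claim: fsg of max of linears} to identify its direction as $-u/\|u\|$ with $u$ the nearest point of the convex hull of (duals of) weights, concluding algebraicity from rationality of that direction. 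This is more constructive and yields algebraicity of the geodesic directly rather than through an approximation argument, at the price of one point you should make explicit: the vectors entering the convex hull are the Killing-form duals $\ta_\lambda$ of the weights, which are rational for the $\QQ$-structure given by the cocharacter lattice, and the nearest point to the origin of a rational polytope with respect to a rational positive definite form is again rational (minimize over the affine hull of the relevant face, a linear problem with rational data); with that spelled out, the resulting one-parameter subgroup is a priori only in $\Hom_\RR(\GG_m,G)$ -- the Cartan torus need not be defined over $k$ -- and, exactly as you say and as the paper intends, the $k$-rationality and the identification with Kempf's homomorphism come from the maximality and uniqueness in Theorem \ref{thm: kempf1}.
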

  \begin{proof}
    Let $\tau \in\Hom_\ell(\GG_m, G)$ be Kempf's homomorphism of $v$. 
    By Corollary \ref{cor: kempf uniqu}, the extension of scalars of $\tau$ to $\RR$ is also Kempf's homomorphism of $v$.
    Thus extension of scalars to $\RR$ does not change the claim, and we may assume that $\ell = \RR$. 

    Let $\varsigma\in\Hom_\RR(\GG_m,G)$ be nontrivial.
    Then, by Lemma \ref{lem: alt defn m} and the definitions of $f_v$, we have \[\frac{m(v, \varsigma)}{\|\varsigma\|} = \lim_{s\to \infty}\frac{-f_v(\gamma_\varsigma(s))}{s}.\]
    Using Claim \ref{claim: morphism is geodesic}, this implies that $\tau$ is the algebraic geodesic rays that shrinks $f_v$ the fastest. 
    In particular, $a_{f_v, M, o} \ge \frac{m(v,\varsigma)}{\|\varsigma\|}$
    for all $\varsigma\in\Hom(\GG_m, G)$ and equality occurs if and only if the fastest shrinking geodesic is algebraic.
    
    Without loss of generality, assume that the fastest shrinking geodesic is $\gamma(s) = \exp(s\ta)$ for some $\mathtt {a}\in \fa$. 
    Note that the set of algebraic geodesic rays is dense in the set of all rays, and that by Observation \ref{obs: convexity of f_v} and Claim \ref{claim: lim continuous}, the function \[
    \gamma\mapsto\lim_{s\to \infty}\frac{-f_v(\gamma(s))}{s},\] 
    defined for all geodesic rays $\gamma$ in $\pi(A)$, is continuous. 
    It follows that $a_{f_v, M, o}$ is at most the supremum of 
    $\lim_{s\to \infty}\frac{-f_v(\gamma(s))}{s}$ over all algebraic geodesic rays $\gamma$ from $o$ in $\pi(A)$. This concludes the proof.
  \end{proof}

\section{Alternative proof of Theorem \ref{thm: NimishPengyu}}\label{sec: proof of main theorem}

Let $\varrho:G\to \GL(V)$ be a $\QQ$-representation, $V$ be equipped with a $K$-invariant norm, and $v$ be an unstable non-zero vector in $V(\QQ)$.

Define the shrink-rate function $f_v:G\to \RR$ of $v$ as in \S\ref{sec:shrink-rate function}, i.e., \[f_v(\pi(g)) = \log\|gv\|.\]
Note that the function $f_v$ is the quantity on the left hand side of \eqref{ineq: more then repfunc2}, which we wish to control. 
By Theorem \ref{thm: busemann bound} there exists a fastest shrinking geodesic for $f_v$ (see \S\ref{sec:The fastest shrinking geodesic} for the definition of a fastest shrinking geodesic), denote it by $\gamma$, and $C\in \RR$ such that for every $x\in M:=K\backslash G$, we have
\begin{align}\label{ineq: f more then busemann intro}
f_v(x) \ge \tilde{a}\beta_\gamma(x) + C,
\end{align}
where $\beta_\gamma$ is the Busemann function associated to $\gamma$ (defined in \S\ref{sec:Busemann functions}) and $\tilde a$ is the shrinking rate of $f_v$ on $\gamma$, that is, $\tilde{a} = -\lim_{t\to \infty}\frac{f_v(\gamma(t))}{t}$. Moreover, since $v\in V(\QQ)$, by Lemma \ref{lem: kempf is fastest} $\gamma$ is $\QQ$-algebraic, and hence $\tilde{a}\beta_\gamma$ is a $\QQ$-renormalized Busemann function (see Definition \ref{de: k-algebraic}).

Now, Theorem \ref{thm: class equiv busemann alg} connects the above Busemann function to a $\QQ$-highest weight representation, $\varrho':G\rightarrow\GL(W)$ implying that there exist $w\in W(\QQ)$ stabilized by a parabolic subgroup and non-negative $\tilde{a}$ and $\tilde{C}$ such that for any $g\in G$
\begin{equation}\label{eq: beta comb of fund}
    \beta_\gamma(\pi(g))= \tilde\alpha \log \|\varrho'(g)w\| - \tilde{C}.
\end{equation}
Note that $w$ is the highest weight vector with respect to some maximal $\ell$-split torus and a choice of a simple system by Lemma \ref{lem:stabilized by parab}.


Combining \eqref{ineq: f more then busemann intro} and \eqref{eq: beta comb of fund}, one may deduce that for some non-negative $a,c$ and all $g\in G$, we have \[
f_v(\pi(g)) \ge a \log \|\varrho'(g)w\| - c.\]
The claim now follows from the definition of $f_v$.

\appendix

\section{Proof of Lemma \ref{lem: construction of bilinear form}}
Here we prove Lemma \ref{lem: construction of bilinear form}. We restate it below for the convenience of the reader.  

\begin{lem}[Construction of bilinear form]\label{lem: appendix}
Let $\varrho:G\rightarrow\operatorname{GL}(V)$ be an $\RR$-representation. Then, there is a $K$-invariant positive bilinear form $\bra \cdot, \cdot\ket$ on $V$ so that the linear spaces $V_\lambda$ are orthogonal with respect to it (see \S\ref{sec:representations} for the definition of $V_\lambda$). 
\end{lem}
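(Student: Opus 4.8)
The plan is to construct the bilinear form in two steps: first produce a $K$-invariant inner product on $V$ by averaging an arbitrary one over $K$, and then upgrade it so that distinct weight spaces become orthogonal, using the fact that $K$-invariance already forces a partial orthogonality via the Cartan involution. To begin, fix any positive definite bilinear form $\langle\cdot,\cdot\rangle_0$ on $V$ and average over the compact group $K$ with respect to Haar measure: $\langle u,w\rangle := \int_K \langle \varrho(k)u, \varrho(k)w\rangle_0 \dd k$. This is positive definite and $K$-invariant, i.e. $\varrho(K)$ acts by isometries. The point is that, because $K$ contains the maximal compact of $G$ and $A$ sits inside $G$ in a way compatible with $K$ (the Cartan decomposition $\fg = \fk\oplus\fp$ with $\fa\subseteq\fp$), the action of $\fa$ on $V$ is by symmetric operators with respect to $\langle\cdot,\cdot\rangle$.

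The key step is the following: I would show that for $\ta\in\fa$, the operator $d\varrho(\ta)$ is self-adjoint with respect to the $K$-invariant form. This follows because the Cartan involution $\theta$ (which is $+1$ on $\fk$, $-1$ on $\fp$) satisfies the compatibility that $-\theta$ is an ``adjoint'' operation: more precisely, using that $\varrho(\exp(t\ta))$ and $\varrho(\exp(-t\ta)) = \varrho(\exp(t\theta\ta))^{-1}$ are related by the involution, and that $K = \{g : \theta(g) = g\}$ preserves the form, one gets $\langle d\varrho(\ta)u, w\rangle = \langle u, d\varrho(\ta)w\rangle$ for all $u,w\in V$. Alternatively, and perhaps more cleanly, one can invoke the standard fact (e.g. from Mostow, or Knapp's treatment of reductive groups) that any $K$-invariant inner product arising from a $\theta$-stable setup makes the symmetric part $\fp$ act self-adjointly and the antisymmetric part $\fk$ act skew-adjointly. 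I expect this to be the main obstacle — not because it is deep, but because it requires being careful about the precise compatibility between the chosen $K$, the Cartan involution, and the torus $A$, and about whether $\varrho$ is assumed to intertwine $\theta$ with the transpose-inverse on $\GL(V)$ (which one can always arrange after the averaging above).

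Once $d\varrho(\ta)$ is self-adjoint for every $\ta\in\fa$, the weight space decomposition $V = \bigoplus_{\lambda\in\Phi_\varrho} V_\lambda$ from \eqref{eq: wheight decomposition} is precisely the decomposition of $V$ into joint eigenspaces of the commuting family of self-adjoint operators $\{d\varrho(\ta) : \ta\in\fa\}$, with $V_\lambda$ the eigenspace on which $d\varrho(\ta)$ acts by the scalar $\lambda(\ta)$. Eigenspaces of a self-adjoint operator corresponding to distinct eigenvalues are orthogonal; since for $\lambda\neq\mu$ there is some $\ta\in\fa$ with $\lambda(\ta)\neq\mu(\ta)$, we get $V_\lambda\perp V_\mu$. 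Thus the same form $\langle\cdot,\cdot\rangle$ — already $K$-invariant and positive definite — automatically has the $V_\lambda$ mutually orthogonal, and no further modification is needed. I would close by remarking that this is the form used throughout to define the norm $\|\cdot\|$ on $V$, and that its restriction to each $V_\lambda$ can be taken arbitrary (positive definite) there, which is all that is used in Observation \ref{obs: convexity of f_v}.
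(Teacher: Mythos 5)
Your reduction of the lemma to the self-adjointness of $d\varrho(\ta)$ for $\ta\in\fa$, and the closing argument (commuting self-adjoint operators have orthogonal eigenspaces for distinct weights), are fine; the paper's proof also ends by showing that $A$ acts by Hermitian matrices. The gap is at the key step: averaging over $K$ does \emph{not} make $\fa\subset\fp$ act self-adjointly, and the parenthetical claim that the compatibility of $\varrho$ with the Cartan involution ``can always be arranged after the averaging above'' is exactly the nontrivial content of the lemma, not a consequence of $K$-averaging. Concretely, take $G=\SL_2(\RR)$, $K=\operatorname{SO}(2)$, $A$ the diagonal torus, and $V=\RR^2\oplus\RR^2$ two copies of the standard representation, with basis $e_1,e_2$ of the first copy and $f_1,f_2$ of the second. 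Let $J\in\operatorname{SO}(2)$ be the rotation by $\pi/2$ and let $S=\left(\begin{smallmatrix} I & \epsilon J\\ -\epsilon J & I\end{smallmatrix}\right)$ with $0<\epsilon<1$. This $S$ is symmetric, positive definite, and $K$-invariant (since $\operatorname{SO}(2)$ is abelian, $k^{T}Jk=J$); it is even equal to its own $K$-average, so it can arise from your construction. Yet the weight spaces $V_{+}=\spa\{e_1,f_1\}$ and $V_{-}=\spa\{e_2,f_2\}$ are not orthogonal: $\bra e_1,f_2\ket=\epsilon\, e_1^{T}Je_2=-\epsilon\neq 0$; equivalently, the generator $\diag(1,-1)$ of $\fa$ does not act self-adjointly for this form. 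So among the (possibly large) cone of $K$-invariant inner products one must \emph{select} a compatible one, and mere averaging over $K$ cannot do that.

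The missing ingredient is invariance under the compact real form rather than just $K$: complexify, average a Hermitian form over the maximal compact subgroup $K^{\CC}=\exp(\fk\oplus i\fp)$ of $G^{\CC}$ (the unitarian trick), so that $\fk\oplus i\fp$ acts skew-Hermitianly and hence $\fp$ — in particular $\fa$ — acts Hermitianly, and then descend to a real bilinear form on $V$ using invariance under complex conjugation. This is precisely what the paper does, where the descent uses the uniqueness of the $K^{\CC}$-invariant Hermitian form (to get Galois invariance) and requires a separate treatment of the case where $V\otimes\CC$ is reducible, $\varrho\otimes\CC\cong\sigma\oplus\sigma^{\conjj}$. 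Alternatively you could cite the standard fact (Mostow; cf.\ the Borel--Harish-Chandra reduction theory) that every representation of a reductive group admits a scalar product for which $\varrho(K)$ is orthogonal and $d\varrho(\fp)$ is symmetric — but that fact \emph{is} this lemma, so invoking it is legitimate only as a citation, not as something derived from averaging over $K$.
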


For the proof we need some definitions regarding Galois group actions. 

\begin{de}[Galois Action]\label{de: Galois}
Denote by $\Gal(\CC/\RR)$ the Galois group, i.e., the group of automorphisms of $\CC$ which fix $\RR$ pointwise. Then, the only elements in $\Gal(\CC/\RR)$ are the identity map and the complex conjugation, which we denote by $\conjj$. Moreover, there is an action of $\Gal(\CC/\RR)$ on:
\begin{enumerate}
    \item[(i)] \textbf{complexifications of $\RR$-algebraic objects}, such as $G^\CC$ and $V\otimes \CC$, in a natural way.
    \item[(ii)] \textbf{the category of complex vector spaces} in the following way: 
    Given a vector space $U$, let $U^{\conjj}$ be a vector space with the same set of points as $U$, let the addition on $U^{\conjj}$ be the same as on $U$, and the multiplication by a scalar $c\in \CC$ on $U^{\conjj}$ be the multiplication by the conjugate of $c$ on $U$. We denote the topological map from $U$ to $U^{\conjj}$ by $\conjj$.
    \item[(iii)] \textbf{the category of complex representations of $G^\CC$} as follows: 
    For $\sigma:G^\CC\to \GL(U)$ we can set a (canonical) representation $\sigma^{\conjj}:G^{\CC}\to \GL(U^{\conjj})$ which acts by \[
    \sigma^{\conjj}(g)(u) = \conjj(\sigma({\conjj}(g))({\conjj(u)})).\]
    \item[(iv)] \label{def: action on action} \textbf{$G$ actions} have the following phenomenon:
    Assume $G$ is a group acting on a space $X$ via $\sigma:G\to \operatorname{Aut}(X)$. 
    An action $\Gal(\CC/\RR) \acts \varrho$ is a tuple of actions $\Gal(\CC/\RR)\acts G, \Gal(\CC/\RR)\acts X$  such that \[\conjj(\sigma(g)(x)) = \sigma(\conjj(g))(\conjj(x)),\] 
    for every $g\in G, x\in X$. 
    The above is equivalent to an extension of $\sigma$ to an action of the semidirect product $\tilde{\sigma}: \Gal(\CC/\RR) \ltimes G\to \Aut(X)$.
\end{enumerate}
\end{de}
The main tool we use in the proof is the unitarian trick:
\begin{thm}[{Unitarian Trick, \cite[Thm. 4.11.14]{vara}}]\label{thm: unitrick}
    Let $H$ be a semisimple simply-connected complex Lie group with a maximal compact subgroup $K_H$. 
    The restriction of $H$-representations to $K_H$-representations induces a bijection from the category of complex algebraic representations of $H$ to the category of complex finite dimensional representations of $K_H$. 
\end{thm}
If $G$ is not algebraically simply-connected, replace it with its algebraic simply-connected cover. This preserves the conclusion of Lemma \ref{lem: appendix}.  

To prove Lemma \ref{lem: appendix}, we study real representations of a real $G$ by studying the complex algebraic represenations of the complexification $G^\CC$. 
Then, we will Unitatian trick to relate the discussion to study of complex representation of a maxiaml compact group $K^\CC\subset G^\CC$. 
The compactness of $K^\CC$ will make it easy to find the desired positive definite quadratic form.

\begin{proof}[Proof of Lemma \ref{lem: appendix}]
Let $G$ be a real algebraic semisimple Lie group, and $\varrho:G\to \GL(V)$ a representation. 
Since $G$ is an algebraic group, we can consider its complexification $G^\CC$, with the conjugation action $\conjj_G:G^\CC\to G^\CC$. 
\begin{claim}
    There is a maximal compact subgroup $K^{\CC}\subset G^\CC$ such that $\conjj_G(K^\CC) = K^\CC$, and $K^\CC\cap G = K$.
\end{claim}
\begin{proof}
    To find an explicit description of $K^{\CC}$, the maximal compact subgroup of $G^\CC$, we use standard constructions, see \cite[\S VI,2]{knapp}. Recall that $\theta:\fg\to\fg$ is the Cartan involution antihomomorphism, it acts on the Lie algebra $\fg = \fk \oplus \fp$ by inverting $\fk$ and preserving $\fp$.
    Since $\fg^\CC = \fg\otimes \CC$, we may extend $\theta$ to a real involution $\theta^\CC: \fg^\CC\to \fg^\CC$ by defining \[\theta^{\CC} = \theta \otimes \conjj.\]
    Since $\fg\otimes \CC = \fk\otimes\CC\oplus \fp \otimes \CC$, the above defines a decomposition $\fg^\CC=\fk^\CC\oplus\fp^\CC$, viewed as real vector spaces, for
    \begin{align*}
    \fk^\CC := & \fk \oplus i\fp,\\
    \fp^\CC := & i\fk\oplus \fp.
    \end{align*}
    Moreover, $\fk^\CC$ is the $1$ eigenspace of $\theta^\CC$ and $\fp^\CC$ is the $-1$ eigenspace.
    
    Since $\theta^\CC$ is a Lie algebra homomorphism, we deduce that its fixed points, i.e., $\fk^\CC$, is also a Lie algebra.
    Direct computation shows that the killing form of the group $G^\CC$, viewed as a real algebraic group, is positive on $\fp^\CC$ and negative on $\fk^\CC$.
    This implies that this is indeed the cartan decomposition of $G^\CC$, thought of as a real Lie group.
    In particular, $K^\CC:= \exp \fk^\CC$ is a maximal compact subgroup of $G^\CC$. 
    Since $\fk^\CC = \fk \oplus i\fp$ is invariant to complex conjugations, we deudce tht $\conjj_G(K^\CC) = K^\CC$, and since 
    $\fk\subseteq \fk^\CC$ we deudce that $K\subseteq K^\CC$.
\end{proof}

We assume without loss of generality that $\varrho$ is irreducible.
Then, one of the following holds:
\begin{enumerate}
    \item \label{case: irreducible} $\varrho \otimes \CC$ is an irreducible representation of $G^\CC$.
    \item \label{case: reducible} There is a decomposition $\varrho\otimes \CC \cong \sigma\oplus \sigma^{\conjj}$.
\end{enumerate}

We will first prove the lemma for $\varrho$ as in the case \eqref{case: irreducible}. 
In that case, $\varrho|_{K^\CC}$ is also irreducible by Theorem \ref{thm: unitrick}. 
\begin{claim}\label{claim: integration}
On every irreducible complex $K^\CC$-representation there is exactly one $K^\CC$ invariant, positive definite Hermitian form up to multiplication by a positive scalar.
\end{claim}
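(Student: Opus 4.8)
The plan is to prove Claim \ref{claim: integration} — that an irreducible complex $K^\CC$-representation carries a unique (up to positive scalar) $K^\CC$-invariant positive definite Hermitian form — by the standard averaging argument, which works because $K^\CC$ is compact. First I would establish \textbf{existence}. Start with any positive definite Hermitian form $h_0$ on the representation space $U$; such a form exists since $U$ is a finite-dimensional complex vector space. Then average it over $K^\CC$ against the (normalized) Haar measure: define
\[
h(u,w):=\int_{K^\CC} h_0(\sigma(g)u,\sigma(g)w)\dd g.
\]
Because $K^\CC$ is compact the integral converges, and one checks directly that $h$ is sesquilinear, Hermitian, $K^\CC$-invariant (by translation-invariance of Haar measure), and still positive definite — positivity is preserved since the integrand is $\ge 0$ for $u=w\ne 0$ and strictly positive at $g=e$, hence the integral is positive.

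Next I would establish \textbf{uniqueness}, and this is where irreducibility enters via Schur's lemma. Suppose $h_1,h_2$ are two $K^\CC$-invariant positive definite Hermitian forms on $U$. Each $h_i$ gives a conjugate-linear isomorphism $U\to U^*$, so the composition produces a linear map $T:U\to U$ characterized by $h_1(Tu,w)=h_2(u,w)$ for all $u,w$. A short computation using the invariance of both forms shows $T$ commutes with $\sigma(g)$ for every $g\in K^\CC$, i.e. $T$ is an endomorphism of the irreducible representation $U$. By Schur's lemma $T=c\cdot\mathrm{id}$ for some $c\in\CC$; applying $h_1$ and $h_2$ on the diagonal and using that both are positive definite forces $c>0$. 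Hence $h_2=c\,h_1$, giving uniqueness up to positive scalar.

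I do not expect a serious obstacle here; the only points requiring a little care are: (i) that $K^\CC=\exp\fk^\CC$, as constructed just above in the proof of Lemma \ref{lem: appendix}, really is compact — which is exactly the content of the preceding paragraphs identifying $\fk^\CC=\fk\oplus i\fp$ as the maximal compact Lie algebra of $G^\CC$ viewed as a real group — so that Haar measure is finite and averaging is legitimate; and (ii) checking that the map $T$ in the uniqueness argument is genuinely complex-linear (not merely conjugate-linear), which follows because it is built from two conjugate-linear maps composed. Both are routine. The claim is then immediate, and in the larger proof of Lemma \ref{lem: appendix} it will be combined with the eigenspace decomposition $V^\CC=\bigoplus_\lambda V^\CC_\lambda$ to produce the desired orthogonality of the weight spaces (the averaged form must respect any decomposition that is preserved by the torus, up to the Schur-uniqueness rigidity, and descends to a real form on $V$ by the Galois-equivariance set up in Definition \ref{de: Galois}).
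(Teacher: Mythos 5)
Your proof is correct, and it splits in the same way as the paper's: existence is obtained identically, by averaging an arbitrary positive definite Hermitian form over the compact group $K^\CC$ against Haar measure. Where you diverge is the uniqueness step. You take the standard textbook route: from two invariant forms $h_1,h_2$ you build the complex-linear intertwiner $T$ with $h_1(Tu,w)=h_2(u,w)$, invoke Schur's lemma to get $T=c\,\mathrm{id}$, and use positivity to force $c>0$; your remark that $T$ is honestly $\CC$-linear (being a composition of two conjugate-linear maps) is exactly the point that needs checking, and you handle it. The paper instead avoids Schur's lemma altogether: it sets $\alpha:=\inf\{\alpha'>0: H_1-\alpha' H_2 \text{ not positive definite}\}$, observes that $H_1-\alpha H_2$ is an invariant non-negative Hermitian form, and that its radical $W=\{w: (H_1-\alpha H_2)(w,w)=0\}$ is a nonzero $K^\CC$-invariant subspace, so irreducibility forces $W=U$, i.e.\ $H_1=\alpha H_2$. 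The two arguments are equivalent in substance (both reduce to producing an invariant object and using irreducibility), but yours leans on Schur's lemma as an external input while the paper's infimum trick is self-contained and only needs that the radical of an invariant semi-definite form is an invariant subspace; yours, in exchange, is the more routine and more widely recognizable argument. Either is acceptable here.
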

\begin{proof}
Let $\sigma:K^\CC\to \GL(U)$ be an irreducible representation.
There is at least one such invariant positive definite Hermitian form, as we can average a non-invariant positive definite Hermitian form along the $K^\CC$ action.

If there are two such positive definite Hermitian forms, $H_1, H_2$, consider the infimum 
\[\alpha:=\inf\{\alpha'>0: H_1-\alpha' H_2 \text{ is not positive definite}\}.\]
Then, the Hermitian form $H_1-\alpha H_2$ is non-negative definite and $K^\CC$-invariant. Moreover, the set \[
W = \{w\in U:(H_1-\alpha H_2)(w,w)=0\}\] is nonempty, not equal to $U$, and is $K^\CC$-invariant. This contradicts the irreducibility of $\sigma$ as a $K^\CC$ representation. 
\end{proof}

Denote the unique invariant, positive definite Hermitian form on $V^{\CC}=V\otimes \CC$ by $H^\CC$.
Since the the Galois group $\Gal(\CC/\RR)$ acts on the $K^\CC$, $V^\CC$,  and the $K^\CC$-action $\varrho \otimes \CC$ (i.e., satisfies property \ref{def: action on action} of Definition \ref{de: Galois}), we deduce that $H^\CC$ is Galois invariant. 
Hence, $H^\CC$ is induced from a positive definite symmetric billinear form $H$ on $V$. 

We are left to show that $H$ satisfies the desired property, that is, the linear spaces $V_\lambda$ are orthogonal with respect to it. 
Assume that $V\cong \RR^n$, $V\otimes \CC\cong \CC^n$, and $H^\CC$ is the standard Hermitian form on $\CC^n$. Then, it is enough to show that $A$ is sent by $\varrho^\CC$ to a group of Hermitian matrices.

As in Definition \ref{de: Galois}, the representation $\varrho$ defines a map $\varrho^\CC:G^\CC\to \GL_n(\CC)$.
Define $\theta_n:\GL_n(\CC)\to \GL_n(\CC)$ by \[\theta_n(M) = \conjj(M^{-t}).\] 
Since $\varrho^\CC(A)$ is invariant under $\theta^\CC$, in order to show that $\varrho^\CC(A)$ is also invariant under $\theta_n$, it suffices to prove that 
$\theta_n\circ \varrho^\CC \circ \theta^\CC = \varrho^\CC$. 
Since $K^\CC$ preserves the Hermitian form $H^\CC$, it follows that $\varrho^\CC(K_\CC)\subseteq U(n)$. In particular, $\theta_n\circ \varrho^\CC \circ \theta^\CC|_{K^\CC} = \rho|_{K^\CC}$. 
Since $K^\CC$ is Zariski dense in $G^\CC$, it follows that $\theta_n\circ \varrho^\CC \circ \theta^\CC = \varrho^\CC$, as desired.

Now, assume we are in case \eqref{case: reducible}, i.e., $\varrho\otimes \CC \cong \sigma\oplus \sigma^{\conjj}$,
for some $\sigma:G^\CC\to \GL(U)$.
The $\RR$-linear map $\conjj$ on $V\otimes \CC \cong U\oplus U^{\conjj}$ defines an action on the space $\operatorname{Pos}(V\otimes \CC)$ of positive definite Hermitian forms $H$ on $V\otimes \CC$ by \[
\conjj(H)(v_1,v_2) = \conjj(H(\conjj(v_1), \conjj(v_2))).\]
The group $G^\CC$ acts on $\operatorname{Pos}(V\otimes \CC)$ 
as well, and the $K^\CC$ invariant Hermitian forms 
\[\operatorname{Pos}(V\otimes \CC)^{K^{\CC}} = \operatorname{Pos}(U)^{K^{\CC}}\oplus \operatorname{Pos}(U^{\conjj})^{K^{\CC}}.\]
Each of these components is isomorphic to $\RR^{>0}$ by Claim \ref{claim: integration}. 
Note that $\Gal(\CC/\RR)$ acts on the action $K^\CC \acts \operatorname{Pos}(V\otimes \CC)$, and when restricting this action to $\operatorname{Pos}(V\otimes \CC)^{K^{\CC}}$ we get that $\conjj$ replaces the two isomorphic components. 
Adding the conjugation to the acting group, we get a semidirect product,
$\Gal(\CC/\RR)\ltimes K^{\CC}\acts \operatorname{Pos}(V\otimes \CC)$. The space of invariants of this action is $\RR^{>0}$, and hence up to multiplication by positive scalar there is again only one such positive quadratic form. 
Since $\conjj$ preserves $K^{\CC}$ in $G^{\CC}$ we get an action of the compact group $\Gal(\CC/\RR)\ltimes K^{\CC}$ on $V^\CC$. 
The rest of the proof is similar to case (1).
\end{proof}

The next example shows that case (2) in the proof of Lemma \ref{lem: appendix} can occur. 
\begin{ex}
Let $G=\operatorname{SU}(3)$, $V=\CC^3$, and $\varrho:G\rightarrow\operatorname{GL}(V)$ be the standard inclusion representation.
Since $G$ acts transitively on the unit sphere in $V$, it is irreducible. Moreover, restriction of scalars $\varrho'=\res_{\CC/\RR}(\varrho)\in {\rm Rep}_{\RR}(G)$ is also irreducible. 
However, the complexification of $\varrho'$ is reducible, as $\varrho'\otimes_{\RR} \CC = \varrho\otimes_\RR\CC$ is of complex dimension $6$ and has the $\CC$-linear multiplication map to $\varrho = \varrho\otimes_\CC\CC$. 
\end{ex}


\section*{Funding statement}
The first autor is supported by ERC grant HomDyn, ID 833423. The second author did not receive support from any organization for the submitted work.


\begin{thebibliography}{1}

\bibitem{borel2}A. Borel, \emph{Density and maximality of arithmetic subgroups}, Journal f\"ur die reine und angewandte Mathematik \textbf{224} (1966), 78-89.

\bibitem{borelf} A. Borel, \emph{Introduction aux groupes arithm\'etiques},  (French) Publications de l'Institut de Math\'ematique de l'Universit\'e de Strasbourg, XV. Actualit\'es Scientifiques et Industrielles, No. 1341 Hermann, Paris 1969 125 pp.

\bibitem{borel3} A. Borel, \emph{Linear Algebraic Groups}, \emph{Algebraic groups, arithmetic groups}, W. A. Benjamin, New York, 1969.

\bibitem{borel} A. Borel, \emph{Linear algebraic groups}, second enlarged edition, Springer, 1991.


\bibitem{BT}
A. Borel and J. Tits, \textit{Groupes reductifs}, Publ. IHES 27 (1965).




\bibitem{Metric Spaces}
M. R. Bridson, A. Haefliger, \emph{Metric Spaces of Non-Positive Curvature}, Grundlehren der mathematischen Wissenschaften, Springer, Berlin, Heidelberg, 1999. 

\bibitem{Cassels and Swinnerton-Dyer} J. W. S. Cassels and H. P. F. Swinnerton-Dyer, \emph{On the product of three homogeneous
linear forms and the indefinite ternary quadratic forms}, Philos. Trans. Roy. Soc. London. Ser. A. 248 (1955), 73–96.

\bibitem{dani} S. G. Dani, \emph{Divergent Trajectories of Flows on Homogeneous Spaces and Diophantine Approximation}, Journal Reine Angew. Math. \textbf{359} (1985), 55--89. 

\bibitem{DaniMarg} S. G. Dani and G. A. Margulis, \emph{Orbit closures of generic unipotent flows on homogeneous spaces of $\SL_3(\RR)$}, Math. Ann. 286 (1990), no. 1-3, 101–128

\bibitem{linearization} S. G. Dani and G. Margulis, \emph{Limit distributions of orbits of unipotent flows and values
of quadratic forms}, I. M. Gelfand Seminar, Adv. Soviet Math., 16:91–137, Part 1, Amer. Math. Soc., Providence, RI, 1993.

\bibitem{tensor representations} P. Deligne, J. S. Milne, A. Ogus, and K. Shih, \emph{Hodge cycles, motives, and Shimura varieties}, LNM 900, Springer, 1982, pp. 9-100.

\bibitem{geometry}P. Eberlein, \emph{Geometry of Non-Positively Curved Manifolds}, Chicago Lectures in Math., U. Chicago Press, Chicago-London, 1996. 

\bibitem{introrep}
P. Etingof, O. Golberg, S. Hensel,
T. Liu, A. Schwendner, D. Vaintrob, and E. Yudovina. \textit{Introduction to representation theory}. Vol. \textbf{59}. American Mathematical Soc., 2011.

\bibitem{fultonharris} W. Fulton and J. Harris, \emph{Representation theory, a first course}, vol. 129, Springer-Verlag New York, 2004. 

\bibitem{Grayson}D. Grayson, \emph{Reduction theory using semistability}, Comment Math Helv 59 (1984) 600

\bibitem{gromov}
M. Gromov. \textit{Hyperbolic groups}, Essays in group theory. Springer, New York, NY, 1987. 75-263.


\bibitem{helgarson} S. Helgason, \emph{Differential Geometry, Lie Groups and Symmetric Spaces}, Academic Press, New York, 1978. 

\bibitem{KDSY} D. Kleinbock, N. de Saxc\'e, N. A. Shah, P. Yang,
\emph{Equidistribution in the space of 3-lattices and Dirichlet-improvable vectors on planar lines}, arXiv:2106.08860. 

\bibitem{escape mass}M. Einsiedler, E. Lindenstrauss, P. Michel, and A. Venkatesh, \emph{Distribution of periodic torus orbits on homogeneous spaces}, Duke Math. J. 148 (2009), no. 1, 119–174.

\bibitem{humphreys} J. E. Humphreys, \emph{Introduction to Lie Algebras and Representation Theory}, Graduate texts in mathematics, Springer New York, 1972. 


\bibitem{kempf}
G. R. Kempf, \emph{Instability in invariant theory}. Ann. Math. 108 (1978): 299-316.

\bibitem{knapp} A. W. Knapp, \emph{Lie Groups Beyond an Introduction}, 2nd ed., Birkh\"auser, Boston, 1996.

\bibitem{linearization3} E. Lindenstraus    s, G. Margulis, A. Mohammadi, and N. A. Shah, \emph{Quantitative behavior of unipotent flows and an effective avoidance principle}, Journal d'Analyse, to appear.

\bibitem{Mahler}K. Mahler, \emph{On lattice points in n-dimensional star bodies. I. Existence theorems}, Proceedings of the Royal Society of London. Series A: Mathematical, Physical and Engineering Sciences, 187: 151–187 (1946).

\bibitem{mumford}
D. Mumford, J. Fogarty, and F. Kirwan. \textit{Geometric invariant theory}. Vol. 34. Springer Science \& Business Media, 1994.





\bibitem{horospheres} N. Peyerimhoff, \emph{Isoperimetric and ergodic properties of horospheres in symmetric spaces}, Smooth ergodic theory and its applications, Proc. Sympos. Pure Math. 69, Amer. Math. Soc., Providence, RI, 2001, pp. 793-808. 

\bibitem{ratner} M. Ratner, \emph{Raghunathan's topological conjecture and distributions of unipotent flows}, Duke Math. J. 63 (1991), no. 1, 235–280. 

\bibitem{linearization2} N. A. Shah, \emph{
Uniformly distributed orbits of certain flows on homogeneous spaces}, Math. Ann., 289(2):315–334, 1991. 

\bibitem{shah}
N. A. Shah, \textit{Equidistribution of expanding translates of curves and Dirichlet’s theorem on Diophantine approximation.} Invent. Math. 177 (2009), 509-532.

\bibitem{NimishPengyu} N. Shah, P. Yang, \emph{Equidistribution of expanding degenerate manifolds in the space of lattices}, 	arXiv:2112.13952. 


\bibitem{Shapira}
U. Shapira, \emph{Full escape of mass for the diagonal group}, Int. Math. Res. Not. IMRN 15 (2017), 4704–4731.

\bibitem{st}
O. N. Solan and N. Tamam, \emph{On Topologically Big Divergent Trajectories}, arXiv preprint arXiv:2201.04221 (2022).

\bibitem{sprin_linear_groups} T. A. Springer, \emph{Linear algebraic groups}, Second edition, Progress in Mathematics, 9. Birkh\"auser Boston, Inc., Boston, MA, 1998. xiv+334 pp. 


\bibitem{tamam2} N. Tamam, \emph{Existence of non-Obvious divergent trajectories in homogeneous spaces}, Israel Journal of Mathematics 247, no. 1 (2022): 459-478.

\bibitem{tits} J. Tits, \emph{Classification of algebraic semisimple groups, Algebraic groups and discret groups}, Matematika 2,
Vol. 12, 110--143, 1968.

\bibitem{TW} G. Tomanov, B. Weiss, \emph{Closed orbits for actions of maximal tori on homogeneous spaces}, Duke Math. J. 199 no. 2 (2003) 367-392. 


\bibitem{vara}
S. Varadarajan, Veeravalli, \emph{Lie groups, Lie algebras, and their representations}, Vol. 102. Springer Science \& Business Media, 2013.

\bibitem{Weiss}B. Weiss, \emph{Divergent trajectories on noncompact parameter spaces}, Geom. Funct. Anal. 14 (2004), no. 1, 94-149.

\bibitem{Pengyu} P. Yang, \emph{Equidistribution of expanding translates of curves and Diophantine approximation on matrices}, Invent. math. 220, 909–948 (2020).


\end{thebibliography}
\end{document}